\documentclass[11pt]{amsart}
\usepackage{amssymb,amsfonts,amsmath,amsthm}
\usepackage[all,2cell]{xy}
\usepackage{graphicx}
\usepackage{pstricks}
\usepackage[mathscr]{eucal}
\usepackage{amscd}
\usepackage{color}		% Need the color package
\usepackage{epsfig}
\usepackage{enumerate}
\usepackage{indentfirst}
\usepackage{fancyhdr}

\theoremstyle{plain}
\newtheorem{thm}{\bf Theorem}[section]
\newtheorem{prop}[thm]{\bf Proposition}
\newtheorem{lem}[thm]{\bf Lemma}
\newtheorem{cor}[thm]{\bf Corollary}

\theoremstyle{definition}
\newtheorem{dfn}[thm]{\bf Definition}

\newtheorem{ex}[thm]{\bf Example}

\theoremstyle{remark}
\newtheorem{rem}[thm]{\bf Remark}

\def \A{\mathbb{A}}
\def \G{\mathbb{G}_m}

\def \Z{\mathbb{Z}}
\def \Q{\mathbb{Q}}
\def \C{\mathbb{C}}

\def \P{\mathbb{P}}

\def \O{\mathcal{O}}
\def \K{{\rm op}K_T}
\def \KG{{\rm op}K_G}
\def \KTT{{\rm op}K_{T\times T}}

\newcommand{\acknowledge}{\subsection*{Acknowledgments}}
\newcommand{\conventions}{\subsection*{Conventions}}
\newcommand{\notation}{\subsection*{Notation}}

%%%%%Command line for transformation groups.
%\title[Equivariant Chow groups of embeddings]{Equivariant Chow groups of %and Chow cohomology 
%rationally smooth group embeddings}  
\title[Localization in equivariant operational $K$-theory]
{Localization in equivariant operational $K$-theory and the 
Chang-Skjelbred property}%{Localization in equivariant Chow cohomology}  
%and intersection theory on $\Q$-filtrable varieties}
\author[Richard P. Gonzales]{Richard P. Gonzales\,*}\thanks{* 
Supported by %the Institut des Hautes \'{E}tudes Scientifiques,   
the Max-Planck-Institut f\"{u}r Mathematik and  
the Institut des Hautes \'{E}tudes Scientifiques} %, and T\"{U}B\.{I}TAK Project No. 112T233.}
\address{
Max-Planck-Institut f\"{u}r Mathematik\\
Vivatsgasse 7 \\
53111 Bonn\\
Germany
}
%\newline
%\newline
%\indent 
\address{
Institut des Hautes \'{E}tudes Scientifiques\\ 
35 Route de Chartres\\
F-91440 Bures-sur-Yvette\\
France
\vspace{.5cm}
}
%\newline
%\and
%\newline
%Faculty of Engineering and Natural Sciences\\
%Sabanc\i \, \"{U}niversitesi\\
%Orhanli - Tuzla 34956, Istanbul\\
%TURKEY}
\email{rgonzalesv@ihes.fr}
\begin{document}

\begin{abstract}
We establish a localization theorem of Borel-Atiyah-Segal type for 
the equivariant 
operational $K$-theory of Anderson and Payne \cite{ap:opk}. 
Inspired by the work of Chang-Skjelbred 
and Goresky-Kottwitz-MacPherson, 
we establish a general form of GKM theory in this setting,  
applicable to %possibly 
singular schemes with torus action.  
Our results are deduced from 
those in the smooth case 
via 
Gillet-Kimura's  
%via 
technique of cohomological descent for equivariant 
envelopes.        
%Inspired by Kimura's work,  
%we show  
%that equivariant Chow cohomology is well-adapted 
%for computations in intersection theory of singular varieties with torus actions.  
%Finally, 
%As an application of our work, 
%of our 
%techniques, 
As an application,   
we extend Uma's description of the equivariant 
$K$-theory of smooth compactifications of 
reductive groups 
to the equivariant operational $K$-theory of all, 
possibly singular,  
projective group embeddings. %of reductive groups. 
%Further applications to equivariant intersection 
%theory are given. 
%
%Roughly speaking,    
\end{abstract}

\maketitle

%\tableofcontents

\section{Introduction and Motivation}% and Statement of the main results}

%In the seminal paper \cite{gkm:eqc},  
%Let $T$ be an algebraic torus. 
%defined over an algebraically closed field $k$ of characteristic zero.  
Goresky, Kottwitz and MacPherson, in their seminal paper 
\cite{gkm:eqc},  
developed a theory, nowadays called GKM theory, that makes it possible 
to describe the equivariant cohomology of certain {\em $T$-skeletal} varieties: 
complete algebraic varieties upon which a complex 
algebraic torus $T$ acts
with a finite number of fixed points and invariant curves. 
Let $X$ be a $T$-skeletal variety and denote by $X^T$ the fixed point set. 
%Cohomology here is considered with rational coefficients.
The main purpose of GKM theory is to identify 
the image of the functorial map 
$
i^* : H^*_T(X) \to H^*_T(X^T),
$
assuming $X$ is {\em equivariantly formal}. 
GKM theory has been mostly applied to smooth projective $T$-skeletal varieties, 
because of the 
Bialynicki-Birula decomposition \cite{bb:torus}.
Additionally, the GKM data
issued from the fixed points and invariant curves 
has been explicitly obtained
for some interesting cases: %of smooth projective $T$-varieties: 
flag varieties \cite{c:schu}, 
%toric varieties (Brion \cite{bri:eqchow}) 
and regular embeddings of reductive groups 
\cite{bri:eqchow, bri:bru}. %\cite{uma:kth}. 
In contrast, regarding singular varieties, GKM theory has been applied
to Schubert varieties \cite{c:schu} and  
to {\em rationally smooth} projective group embeddings, 
due to the author's work \cite{go:cells, go:equiv}. 

\smallskip 

Because of its power as a computational tool, 
GKM theory has been 
implemented in other equivariant cohomology  
theories on schemes with torus actions.     
For instance, Brion established GKM theory 
for equivariant Chow groups \cite{bri:eqchow}, 
Vezzosi-Vistoli did it for equivariant algebraic $K$-theory \cite{vv:hkth}, 
and Krishna  provided the tool in equivariant algebraic cobordism \cite{kri:cob}. 
Nevertheless, in all of these generalizations, %however, 
a 
crucial assumption on  
{\em smoothness} of the ambient space needs to be made.  

\smallskip

This paper is concerned with 
%
%GKM-theory for the 
the equivariant $K$-theory of possibly 
{\em singular} 
schemes equipped with an action of an algebraic torus 
$T$ (i.e. $T$-schemes). %In this setting, 
Our main goal is to increase 
the applicability of
GKM theory as a tool 
for understanding the geometry of singular $T$-schemes in this setting.    
% \cite{ap:opk}. 
%In this paper we extend their results to include more general 
%spherical varieties and $T$-skeletal varieties. 
%
%understanding 
%available as 
%a tool in 
%to the %for the %equivariant $K$-theory 
%
For convenience of the reader, 
we 
briefly review 
some of the basic underlying notions, as well as 
the previous progress made on this problem.  
Equivariant $K$-theory was 
developed by Thomason \cite{th:alg}. 
%before 
%stating the main problems that interest us. %and \cite{mer:eqKth}. %, %\cite{cg:rep}.    
% 
%before moving on to the  problems that interest us.  
%
Let $X$ be a $T$-scheme. 
Let $K_T(X)$ denote the Grothendieck group of $T$-equivariant %locally free sheaves (i.e. $T$-equivariant 
vector bundles on $X$. This is a ring, with the product given by the tensor product of equivariant vector bundles.  Let $K^T(X)$ denote the Grothendieck group of $T$-equivariant coherent sheaves on $X$.  
This is a module for the ring $K_T(X)$.  If we identify the representation ring $R(T)$ with $K_T(pt)$, then pullback by the projection $X\to pt$ gives a
natural map $R(T)\to K_T(X)$. In this way, $K_T(X)$ becomes an $R(T)$-algebra
and $K^T(X)$ an $R(T)$-module. 
The functor $K_T(-)$ is contravariant with respect to arbitrary equivariant maps. In contrast, $K^T(-)$ is covariant for equivariant proper morphisms and contravariant for
 equivariant flat maps. 
If $X$ is smooth, then every $T$-equivariant coherent
sheaf has a finite resolution by $T$-equivariant locally free sheaves, and thus 
%This implies that 
$K_T(X)\simeq K^T(X)$.  %(\cite{th:alg}). 
 When $X$ is complete, 
the equivariant Euler characteristic 
$$\mathcal{F}\mapsto \chi(X,\mathcal{F})=\sum_i (-1)^i [H^i(X,\mathcal{F})]$$%=\sum_i (-1)^i {\rm tr}(H^i(X,\mathcal{F})),$$
yields the pushforward map 
$\chi:K^T(X)\longrightarrow K^T(pt)\simeq R(T)$. %\simeq \Z[M]$.
%
%
%Also, by \cite{mer:comp}, $K^T(X)$ (resp. $K^G(X)$) is 
%isomorphic to $K^T(X_{\rm red})$ (resp. $K^G(X_{\rm red})$). 
%
%\smallskip
%
By work of Merkurjev \cite{mer:comp}, one 
recovers the usual $K$-theory from the 
equivariant one via 
the identity 
$K^T(X)\otimes_{R(T)}\Z \simeq K(X)$. %(with $\Q$ coefficients). 

%\smallskip

\medskip

In general, %specially for singular varieties, the 
the $K$-theory groups are difficult to compute. 
In the case of singular varieties, 
they can be quite large \cite[Introduction, p. 2]{ap:opk}. 
In the smooth case, however, there are three powerful theorems that allow many computations and important comparison theorems of Riemann-Roch type. 
%One of the fundamental theorems in equivariant $K$-theory, crucial for computations theorems of Riemann-Roch type is 
The first one is the localization theorem of Borel-Atiyah-Segal type.  
%(see also \cite{vv:hkth} for higher equivariant $K$-theory). 

\medskip

\noindent{\bf Localization theorem of Borel-Atiyah-Segal type} 
(\cite[Th\'eor\`eme 2.1]{th:con}){\bf .}
{\em Let $X$ be a smooth complete scheme with an action of $T$. 
Let $X^T$ be the subscheme of fixed points and let $i_T:X^T\to X$ be 
the natural inclusion. Then the pullback  
$i_T^*:K_T(X)\to K_T(X^T)$ 
is injective, and it becomes surjective over the quotient field of $R(T)$.} 

\medskip 

Let $X$ be a smooth complete $T$-scheme. 
The second fundamental theorem in this context identifies the image of $i^*_T$ inside 
$K_T(X^T)\simeq K(X^T)\otimes R(T)$. 
To state it, we introduce some notation. 
Let $H\subset T$ be a subtorus of codimension one. 
Observe that $i_T$ factors as $i_{T,H}:X^T\to X^H$ followed by 
$i_H:X^H \to X$. Thus, the image of $i^*_T$ is contained in the image of $i^*_{T,H}$. In symbols, 
$${\rm Im}[i^*_T:K^0_T(X)\to K^0_T(X^T)]\subseteqq \bigcap_{H\subset T} {\rm Im}[
i^*_{T,H}:K^0_T(X^{H})\to K^0_T(X^T)
],$$
where the intersection runs over all codimension-one subtori $H$ of $T$. %(the same holds over $\Q$). 
This %observation leads to a 
criteria, 
which 
dates back to the work of Chang-Skjelbred \cite{cs:schur} in equivariant cohomology, 
%that  
%, as in the classical case, 
%to a 
yields a complete description 
%describes 
of %the image of $i_T^*$.    
$K_T(X)$ as a subring of $K_T(X^T)\simeq K(X^T)\otimes R(T)$. 
%
%Moreover, the image of $i^*_T$ can be identified using the following 
%This criteria 
\medskip

\noindent{\bf CS property} (\cite[Theorem 2]{vv:hkth}){\bf .}   {\em Let $X$ be a smooth complete $T$-scheme. 
Then the image of the injective map 
$i^*_T:K_T(X)\to K_T(X^T)$ equals %is %exactly 
the intersection of the images of 
$i^*_{T,H}:K_T(X^H)\to K_T(X^T),$ 
where $H$ runs over all subtori of codimension one in $T$.} 

\medskip

Now let $X$ be a (complete) $T$-skeletal variety. %If $X$ is normal, then 
Assume, for simplicity, that each $T$-invariant irreducible curve has exactly two fixed points (e.g. $X$ is equivariantly embedded in a normal $T$-variety). 
%each $T$-invariant irreducible curve on $X$ has exactly two fixed points. 
In this setting, it is possible to define a ring $PE_T(X)$ of 
{\em piecewise exponential} functions. 
Indeed, let $K_T(X^T)=\oplus_{x\in X^T}R_x$, where $R_x$ is a copy of the representation ring $R(T)$. We then define $PE_T(X)$ as the subalgebra of $K_T(X^T)$ given by 
$$
PE_T(X)=\{(f_1,\ldots, f_m)\in \oplus_{x\in X^T}R_x\,|\, f_i\equiv f_j \mod 1-e^{-\chi_{i,j}}\}
$$ 
where $x_i$ and $x_j$ are the two distinct fixed points in the closure of the one-dimensional $T$-orbit $C_{i,j}$, and $\chi_{i,j}$ is the character of $T$ associated with $C_{i,j}$. This character is uniquely determined up to sign (permuting the two fixed points changes $\chi_{i,j}$ to its opposite). 
%Notice that $PE_T(X)$ lies naturally in $K^0_T(X^T)$.
In light of the CS property, one obtains:

\medskip

\noindent {\bf GKM theorem} (\cite[Corollary 5.12]{vv:hkth}, \cite[Theorem 1.3]{uma:kth}){\bf .} 
{\em Let $X$ be a smooth $T$-skeletal variety. Then $i^*_T:K_T(X)\to K_T(X^T)$ induces an isomorphism between $K_T(X)$ and $PE_T(X)$. If $X$ is also projective, 
then $K_T(X)$ is a free $R(T)$-module of rank $|X^T|$. }

\medskip

Thus far, it is clear that to any complete $T$-skeletal
variety $X$ we can associate the ring $PE_T(X)$,  
regardless of whether $X$ is smooth or not. 
(In fact, if $X$ is a projective 
compactification of a reductive group $G$ with maximal torus $T$, 
then $X$ is $T\times T$-skeletal, and $PE_{T\times T}(X)$ 
has been explicitly identified in \cite{go:equiv}.)  
Nonetheless, as it stems from the previous facts, 
$PE_T(X)$ does not always describe $K_T(X)$.  
This phenomena yields some natural 
{\em questions:} Let $X$ be a $T$-skeletal variety. 
What kind of information does $PE_T(X)$ encode? If not %equivariant cohomology or 
equivariant $K$-theory, is it still reasonable to expect that $PE_T(X)$ encodes certain topological/geometric information that is {\em common} to all possible $T$-equivariant resolution of singularities of $X$? The work of Payne \cite{p:t} and Anderson-Payne \cite{ap:opk}, inspired in turn by the works of Fulton-MacPherson-Sottile-Sturmfels \cite{f:sph} and Totaro \cite{to:linear}, gives a positive answer to these questions 
 {\em when $X$ is a toric variety}. Namely, the GKM data (i.e. $PE_T(X)$) of a toric variety encodes all the information needed to reconstruct Bott-Chern operators defined on the structure sheaves $\mathcal{O}_{\overline{Tx}}$ of the $T$-orbit closures $\overline{Tx}\subseteq X$ (and their equivariant resolutions).    
This positive result is our motivation. 
In the pages to follow we will show that Anderson-Payne's 
assertion on toric varieties holds more generally 
for {\em all} $T$-skeletal varieties. 
But first, and in order to  
put these statements in a much clearer form,  
we recall some of the main aspects of Anderson and Payne's 
equivariant operational $K$-theory.

%\medskip
%
%To address the questions posted at the end of the Introduction, I %establish localization theorems of Borel-Atiyah-Segal type for 
%Fulton-MacPherson's equivariant operational Chow 
%rings \cite{fm:op}, \cite{eg:eqint} and Anderson-Payne's %operational $K$-theory \cite{ap:opk}. Consequently, I establish 
%GKM theory for equivariant operational $K$-theory and equivariant %operational Chow groups (with $\Q$-coefficients).  
%This complements the results of \cite{go:tlinear}, devoted 
%to the study of equivariant Chow groups of $T$-linear varieties. 
%
\medskip

Fulton and MacPherson \cite{fm:op} devised a 
machinery that 
produces a cohomology theory out of a homology theory. This cohomology has all the formal properties one could hope for, 
and it is well suited for the study of singular schemes. 
Taking as input the homology functor $K^T(-)$, 
%which assigns to a $T$-scheme its Grothendieck group  
%of $T$-equivariant coherent sheaves,    
Anderson-Payne \cite{ap:opk} 
obtained a theory that is very well suited for computations. 
Moreover, it agrees with Thomason's equivariant $K$-theory 
when $X$ is smooth (Properties (a) and (b) below).   
We outline here the main notions of \cite{ap:opk}.  
Let $X$ be a $T$-scheme. The 
{\em $T$-equivariant operational $K$-theory} ring of $X$, denoted $\K(X)$, is defined as follows: 
an element $c\in \K(X)$ is a collection 
of homomorphisms $c_f:K^T(Y)\to K^T(Y)$  
%written $\xi\mapsto f^*c(\xi)$, 
for every $T$-map $f:Y\to X$. (Recall that $K^T(Y)$ denotes the  
Grothendieck group of $T$-equivariant coherent sheaves on $Y$.)   
%As for ordinary operational Chow groups (\cite[Chapter 17]{f:int}), 
These homomorphisms 
%should be
%compatible with the operations on 
must be compatible with ($T$-equivariant) proper pushforward, flat pullback and Gysin morphisms \cite{ap:opk}. 
%and equivariant local complete intersections. By this we mean that if $h:Y'\to Y$ and $g:Y\to X$ are equivariant maps, with $h$ proper, then $c_g\circ h_*=h_*c_{gh}$, and similarly for l.c.i. morphisms.   
%, with $Y'\to Y$ proper (resp. flat, resp. determined 
%by intersection with a Cartier divisor); see \cite[Chapter 17]{f:int} for %precise statements. 
%The homomorphism $c_f$ determined by an element $c\in opK^0_T(X)$ is 
%usually denoted simply $c$, with an indication of where it acts.  
%equivariant Chow groups (pull-back for l.c.i. morphisms, 
%proper push-forward, etc.). 
For any $X$, the ring structure on $\K(X)$ 
is given by composition of 
such homomorphisms. With this product, $\K(X)$ 
becomes an associative commutative ring with unit.  
Moreover, $\K(X)$ is contravariantly functorial in $X$. %Indeed, for a $T$-map $g:Y\to X$ the pullback $g^*:opK^0_T(X)\to opK^0_T(Y)$ is given by the collection of operators $(g^*c)_h=c_{gh}$. 
Other salient functorial properties of $\K(-)$ are:

\begin{enumerate}[(a)]
% \item Products $opK^0_T(X)\otimes opK^0_T(X)\to opK^0_T(X)$, $a\otimes b\mapsto a\circ b$, 
%making $K^0_T(X)$ into an associative commutative ring with unit.
%
%\smallskip
%
% \item Contravariant ring maps $f^*:opK^0_T(X)\to opK^0_T(Y)$ for arbitrary equivariant morphisms $f:Y\to X$. 
%\smallskip
%
 \item For any $X$, there is a canonical homomorphism 
$K_T(X)\to \K(X)$ of $R(T)$-algebras, 
sending a class $\gamma$ to the operator $[\gamma]$ which acts via $[\gamma]_g=g^*\gamma\cdot \xi$, for any $T$-map $g:Y\to X$ and $\xi\in K^T(Y)$. There is also a canonical map $\K(X)\to K^T(X)$ defined by $c\mapsto c_{{\rm id}_X}[\mathcal{O}_X]$, where $\mathcal{O}_X$ is the structure sheaf of $X$. Put together, they provide a factorization of the canonical homomorphism 
$K_T(X)\to K^T(X)$ \cite[Theorem 5.6]{ap:opk}.    

\smallskip

\item When $X$ is smooth, the homomorphisms 
%displayed in (a), namely, 
$$K_T(X)\to \K(X)\to K^T(X),$$ 
defined in (a), 
are all isomorphisms 
of $R(T)$-modules \cite[Corollary 4.5 and Theorem 5.6]{ap:opk}.

\smallskip

\item {\em $\A^1$-homotopy invariance \cite[Corollary 4.7]{ap:opk}:} 
For any scheme $X$, the natural pull back map from 
$\K(X)$ to $\K(X\times \A^1)$ is an isomorphism.  

\smallskip

\item {\em Gillet-Kimura's cohomological descent for equivariant 
envelopes \cite[Theorem 5.3]{ap:opk}:}  \label{gillet1.thm}
If $\pi:\tilde{X}\to X$ is an equivariant envelope (that is, 
any $T$-invariant subvariety of $X$ is the birational 
image %, via $\pi$, 
of a $T$-invariant subvariety of $\tilde{X}$)  
and $\pi_1$, $\pi_2$ are the projections 
$\tilde{X}\times_X \tilde{X}\to \tilde{X}$, 
then the following sequence is exact 
%$$0\to \K(X) \to \K(\tilde{X}) \to \K(\tilde{X}\times_X\tilde{X}).$$ 
$$
\xymatrix{
0 \ar[r]& \K(X) \ar[r]^{\pi^*}& 
\K(\tilde{X}) \ar[rr]^{\pi_1^*-\;\pi_2^*\;\;\;\;\;}& &\K(\tilde{X}\times_X \tilde{X}).
}
$$
%\item Equivariant vector bundles on $X$ have equivariant Chern classes in %$A^*_G(X)$.   
%
\smallskip

\item Let $X$ be a complete $T$-variety. 
If $X$ is a toric variety (i.e. $X$ is normal and 
has a dense orbit 
isomorphic to $T$), then $\K(X)\simeq PE_T(X)$ 
\cite[Theorem 1.6]{ap:opk}. Similar results hold for 
non-complete toric varieties \cite{ap:opk}.    

\smallskip

\item {\em Equivariant Kronecker duality 
for spherical varieties \cite[Theorem 6.1]{ap:opk}:}  
Let $B$ be a connected solvable linear algebraic group 
with maximal torus $T$. Let $X$ be a scheme with an 
action of $B$. 
If $B$ acts on $X$ with finitely many orbits, then 
the natural equivariant 
Kronecker map 
$$\mathcal{K}_T:\K(X) \to {\rm Hom}_{R(T)}(K^T(X),R(T)),$$  
induced by pushforward to a point, namely,  
$$\mathcal{K}_T:\; c\longmapsto \{\xi\mapsto \chi(X,c_{{\rm id}_X}(\xi))\}_{\,,}$$
is an isomorphism. This holds e.g. for Schubert varieties and 
spherical varieties.   
%More generally, 
There is a more general version of 
equivariant Kronecker duality, valid for   
$T$-linear schemes (\cite[Section 6]{ap:opk}). This  
class encompasses all  
%all 
%$B$-schemes with finitely 
%many $B$-orbits 
the $B$-schemes mentioned above   
(see e.g. \cite[Theorem 2.5]{go:tlinear}).  
For equivariant Kronecker 
duality in the context of 
equivariant operational Chow groups, see 
%See also 
\cite[Theorem 3.6]{go:tlinear}.  
\end{enumerate}

\medskip

In this paper we use the functorial 
properties listed above, together with resolution of singularities, %and the fact that $\pi:\tilde{X}^H\to X^H$ is also an envelope (for any subtorus $H\subset T$), 
to establish: 
\begin{enumerate}[(I)]
 \item The localization theorem of Borel-Atiyah-Segal type for $\K(X)$, 
whenever $X$ is a complete $T$-scheme 
(Theorem \ref{localization.thm}). 
 \smallskip

 \item The CS property for $\K(X)$, 
where $X$ is any complete $T$-scheme (Theorem \ref{cs.thm}). 
\smallskip

\item GKM theory for possibly singular 
complete %$T$-skeletal 
$T$-varieties: if $X$ is a $T$-skeletal variety, then 
$\K(X)\simeq PE_T(X)$ (Theorem \ref{gkm.thm}). 
\end{enumerate}
%Moreover, if $X$ is complete and $G$-spherical, 
%where $G$ is a connected 
%reductive group with maximal torus $T$, 
%we describe the image 
%of the injective map $i^*_T:\K(X)\to \K(X^T)$ by congruences involving pairs, 
%triples or quadruples of $T$-fixed points (Section 6). 
%This extends \cite[Theorem 7.3]{bri:eqchow} to the equivariant 
%operational $K$-theory of singular complete spherical varieties.  
Together with the combinatorial results of \cite{go:equiv}, this 
extends Anderson's and Payne's work on 
toric varieties to all 
projective group embeddings 
(Theorems \ref{gkm.emb.thm} and \ref{comparison.emb.thm}). 
See Section 7 (as well as \cite{go:tlinear})   
for the corresponding statements in operational Chow groups with 
rational coefficients.

\acknowledge{
The research in this paper was done 
during my visit to the 
Max-Planck-Institut f\"{u}r Mathematik (MPIM) and the 
Institute
des Hautes \'Etudes Scientifiques (IHES).  
I am deeply grateful to both institutions for their support,  
outstanding hospitality, and excellent working conditions.
}

%\section{Preliminaries}

%%%%%%%%%%%%%%%%%%%%%%%%%%%%%%%%%%%%%%%%%%%%%%
%\subsection{Equivariant operational $K$-theory revisited}
%%%%%%%%%%%%%%%%%%%%%%%%%%%%%%%%%%%%%%%%%%%%%

%The machinery of \cite{fm:op} 

%\begin{thm}[\protect{\cite[Corollary 4.5]{ap:opk}}]\label{smooth.op.thm}
%When $X$ is smooth, the homomorphisms $K^0_T(X)\to \K(X)\to K^T_0(X)$ 
%are all isomorphisms of $R(T)$-modules.
%\hfill $\square$ 
%\end{thm}

\section{Conventions and Notation}

\conventions{
Throughout this paper, we fix an algebraically closed 
field $k$ of characteristic zero. 
All schemes and algebraic groups are assumed to be defined 
over $k$. By a scheme we mean a separated scheme 
of finite type. %over $k$. 
A variety is a reduced scheme. 
Observe that varieties need not be irreducible. 
A subvariety is a closed subscheme which is a variety. 
A curve on a scheme is an irreducible one-dimensional subscheme.    
Unless explicit mention is made to the contrary, 
we will assume all schemes are equidimensional. 
A point on a scheme will always be a closed point.
}

\smallskip

\notation{
We denote by $T$ an algebraic torus. 
A scheme $X$ provided with an algebraic action of $T$ is called a 
{\em $T$-scheme}. 
If $X$ is a $T$-scheme, the class in $K^T(X)$ of a $T$-equivariant 
coherent sheaf $\mathcal{F}$ will be denoted by $[\mathcal{F}]$. 
In particular, if $Y\subset X$ is a $T$-stable closed subscheme, 
then the structure sheaf of $Y$ defines a class $[\mathcal{O}_Y]$ 
in $K^T(X)$. For a $T$-scheme $X$,  
we denote by $X^T$ the fixed point subscheme and by $i_T:X^T\to X$
the natural inclusion. %as a closed subscheme.  
If $H$ is a closed subgroup of $T$, we similarly denote by 
$i_H:X^H\to X$ the inclusion of the fixed point subscheme. 
When comparing $X^T$ and $X^H$ we write $i_{T,H}:X^T\to X^H$ 
for the natural ($T$-equivariant) inclusion. 
If $g:Y\to X$ is a $T$-equivariant 
morphism of $T$-schemes, 
then we write   
$g_T:Y^T\to X^T$, or simply $g:Y^T\to X^T$,  
for the 
associated morphism of fixed point subschemes.  
Likewise, we write  
$g^*:\K(X)\to \K(Y)$ for the  
pullback in equivariant operational 
$K$-theory. %(induced by $g:Y\to X$). 
%
%, is denoted by $p^*$.      

\smallskip

We denote by $\Delta$ the character group of $T$, and by 
$\Z[\Delta]$ the group ring over $\Z$ of $\Delta$.  
We let $e^{\chi}$ denote the element of $\Z[\Delta]$ 
corresponding to $\chi\in \Delta$. Then $\{e^\chi\}_{\chi \in \Delta}$ 
is a basis of the $\Z$-module $\Z[\Delta]$. 
For a $k$-linear 
representation $V$ of $T$, we put 
$${\rm tr}(V)=\sum_{\chi \in \Delta}({\rm rank}_kV_{\chi})e^{\chi},$$
where $V_\chi$ is the subspace of invariants of $T$ of weight $\chi$ in $V$. 
It is well-known that ${\rm tr}$ induces an isomorphism 
from the representation ring of $T$, denoted $R(T)$, to $\Z[\Delta]$.   
%When $T$ acts trivially on $X$, any $T$-equivariant coherent sheaf 
%decomposes into eigensheaves $F=\oplus_{\chi\in M}F_\chi$. 
%Then the map $F\mapsto \sum_{\chi}[F_\chi]\otimes e^\chi$ 
%induces a natural map $tr_X:K^T(X)\to K^0(X)\otimes_\Z R(T)$, 
%which is an $R(T)$-linear isomorphism. 
%If $Y\supset Z$ are closed $T$-stable subschemes 
%of $X$, then $\mathcal{O}_Y(-Z)$ is the ideal sheaf of $Z$ in $Y$.
%Thus, viewed as an element of $K^T(X)$, $[\mathcal{O}_Y(-Z)]=
%[\mathcal{O}_Y]-[\mathcal{O}_Z]$. 
}

%%%%%%%%%%%%%%%   Envelopes and Kimura's work

\section{Equivariant envelopes and computability of 
equivariant operational $K$-theory}
Recall that an envelope 
$p:\tilde{X}\to X$ is a proper map such that for any 
subvariety $W\subset X$ there is a subvariety $\tilde{W}$
mapping birationally to $W$ via $p$ (\cite[Definition 18.3]{f:int}). 
In the case of $T$-actions, 
we say that $p:\tilde{X}\to X$ is an {\em equivariant envelope} 
if $p$ is $T$-equivariant, and if we can take $\tilde{W}$ to be 
$T$-invariant for $T$-invariant $W$. If there is an open set 
$U\subset X$ over which $p$ is an isomorphism, then we say that 
$p:\tilde{X}\to X$ is a {\em birational} envelope. 
The following is recorded in \cite[Proposition 7.5]{eg:cycles}.

\begin{lem}\label{smooth_envelope.lem}
%Let $G$ be a connected linear algebraic group. 
Let $X$ be a $T$-scheme. Then there exists a 
$T$-equivariant birational envelope $p:\tilde{X}\to X$, 
where $\tilde{X}$ is a smooth quasi-projective $T$-scheme. \hfill $\square$      
\end{lem}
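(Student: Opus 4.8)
The plan is to construct $\tilde X$ by combining equivariant resolution of singularities with the equivariant Chow lemma, checking the envelope property as the construction proceeds. First I would reduce to the case in which $X$ is irreducible: since $T$ is connected it fixes each irreducible component $X_i$ of $X$, so each $X_i$ is a $T$-invariant subvariety, and if $p_i\colon\tilde X_i\to X_i$ is a $T$-equivariant birational envelope with $\tilde X_i$ smooth quasi-projective, then $\coprod_i p_i\colon\coprod_i\tilde X_i\to X$ is again one: a finite disjoint union of smooth quasi-projective $T$-schemes is smooth and quasi-projective, the map is proper and $T$-equivariant, and it restricts to an isomorphism over the dense $T$-invariant open obtained by deleting the pairwise intersections $X_i\cap X_j$ together with the non-isomorphism loci of the $p_i$. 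So I may assume $X$ is an irreducible $T$-variety.

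Next I would invoke equivariant resolution of singularities. In characteristic zero Hironaka's resolution can be chosen canonical, hence functorial, and therefore automatically $T$-equivariant (Bierstone--Milman, W{\l}odarczyk, and others): there is a $T$-equivariant projective birational morphism $\pi\colon X'\to X$, realized as a composition of blow-ups along smooth $T$-invariant centers, with $X'$ smooth. To restore quasi-projectivity I would apply the $T$-equivariant Chow lemma (valid for torus actions, via Sumihiro's results) to $X'$, obtaining a $T$-equivariant projective birational $q\colon Y\to X'$ with $Y$ quasi-projective, and then resolve $Y$ equivariantly; since $Y$ is quasi-projective, its canonical resolution $r\colon\tilde X\to Y$ is a composition of blow-ups along smooth $T$-invariant centers with $\tilde X$ \emph{smooth and quasi-projective}. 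I then set $p:=\pi\circ q\circ r\colon\tilde X\to X$, which is proper, $T$-equivariant and birational.

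To see that $p$ is an envelope I would use two facts. (i) A blow-up along a smooth $T$-invariant center is a $T$-equivariant birational envelope — for a $T$-invariant subvariety $W$ not contained in the center one takes its strict transform, and for $W$ inside the center one takes the closure of a rational section of the exceptional divisor over $W$ — so any composition of such blow-ups, in particular $\pi$ and $r$, is a $T$-equivariant envelope. (ii) If a $T$-equivariant envelope $f\colon\hat X\to X$ factors as $f=g\circ h$, then $g$ is a $T$-equivariant envelope: given a $T$-invariant $\hat W\subseteq\hat X$ mapping birationally onto $W\subseteq X$, the subvariety $h(\hat W)$ is $T$-invariant and maps birationally onto $W$, since the intermediate dimensions are forced to agree and the degrees multiply to $1$. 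Dominating $q\circ r$ by a composition of smooth-center blow-ups — a further $T$-equivariant resolution of the morphism $q$ — and then applying (ii) shows that $q\circ r$, hence $p=\pi\circ(q\circ r)$, is a $T$-equivariant envelope; as $\tilde X$ is smooth and quasi-projective, this proves the lemma.

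I expect the main obstacle to be precisely this interplay between quasi-projectivity and the envelope property: equivariant resolution produces envelopes but may fail to be quasi-projective, while the equivariant Chow lemma restores quasi-projectivity at the price of a structure morphism that is not, a priori, an envelope; the factorization observation (ii), together with equivariant resolution of morphisms, is what bridges the gap. A secondary technical point is fact (i) for a \emph{singular} ambient scheme, where the exceptional divisor is a projectivized normal cone rather than a bundle; this I would settle either by a local analysis of the sections of that cone over $W$, or by passing instead to a simultaneous (embedded) resolution of the pair $(X,W)$.
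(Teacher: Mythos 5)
The paper does not prove this lemma at all: it cites Edidin--Graham, whose argument is a Noetherian induction on dimension --- take a canonical (hence $T$-equivariant) resolution $X_1\to X$, an isomorphism over a dense invariant open $U$; by induction take a smooth quasi-projective equivariant envelope $X_2\to X\setminus U$; then $X_1\sqcup X_2\to X$ is the desired birational envelope. Your reduction to irreducible $X$ and your factorization observation (ii) are both correct, but your architecture is different: you try to make a single connected resolution (per component) into an envelope, and that is where the argument breaks.

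The gap is your fact (i). Non-equivariantly it is true that blowing up a smooth variety along a smooth center $C$ gives an envelope: for $W\subseteq C$ the exceptional divisor restricted to $W$ is a projective bundle, whose generic fibre is a projective space over $k(W)$ and therefore has a $k(W)$-rational point whose closure maps birationally onto $W$. But the lemma demands an \emph{equivariant} envelope, so for $T$-invariant $W\subseteq C$ you must exhibit a \emph{$T$-invariant} subvariety of $\mathbb{P}(N_CY|_W)$ mapping birationally onto $W$, and ``the closure of a rational section'' gives no control on invariance: a generic point of a fibre is not fixed by the stabilizer of the point beneath it, so its closure need not be $T$-stable. For a torus this can probably be repaired (over a dense invariant open of $W$ decompose $N_CY|_W$ into weight spaces for the generic stabilizer, induct on the rank, and finish with an eigenvector of the residual finite diagonalizable group acting on the generic fibre), but no such argument appears, and every subsequent step --- in particular the assertion that $q\circ r$ is an envelope, which you deduce from (ii) applied to a further tower of blow-ups dominating it --- rests on (i). The Noetherian-induction construction sidesteps the issue entirely: subvarieties contained in the centers or in the non-isomorphism locus are handled by the inductively constructed envelope of $X\setminus U$, adjoined as a disjoint summand, so one never needs an invariant rational section of an exceptional divisor.
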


%\begin{proof}
%By equivariant resolution of singularities, 
%there is a resolution $\pi:X'\to X$ 
%such that $\pi$ is an isomorphism outside some invariant 
%subvariety $S\subset X$. 
%By Noetherian induction, we may assume that we have constructed an 
%equivariant envelope 
%$\tilde{S}\to S$. Now 
%set $\tilde{X}=X'\sqcup \tilde{S}$ (disjoint union). 
%The claim follows.   
%\end{proof}

%%%%%%%%% Why operational are more computable.

Anderson and Payne's version of Gillet and Kimura's notion of 
cohomological descent (Property (\ref{gillet1.thm}), Introduction)
%Kimura's computation of Chow cohomology 
implies that $\K(X)$ of a singular scheme $X$ 
injects into $\K(\tilde{X})$ of a smooth equivariant envelope (which is 
the usual equivariant $K$-theory ring of a smooth scheme) with an explicit cokernel. 
More precisely, suppose that $p:\tilde{X}\to X$ is a $T$-equivariant birational 
envelope which is an 
isomorphism over an open set $U\subset X$. Let $\{Z_i\}$ be the irreducible 
components of $Z=X-U$, and let $E_i=p^{-1}(Z_i)$, with $p_i:E_i\to Z_i$ 
denoting the restriction of $p$. 
%Then $\{S_{iG}\}$ are 
%the irreducible components of $X_G\setminus U_G$ and 
%$E_{iG}=p^{-1}(S_{iG})$. 
The next theorem is Kimura's fundamental result  
\cite[Theorem 3.1]{ki:op}) adapted 
to equivariant operational $K$-theory.% due to Kimura. 

%``Cohomological descent''. Should mention here the dual homological descent sequence. 
%\begin{thm}[\protect{\cite[Proposition 2.4 and Theorem 5.3]{ap:opk}}] 
%Let $p:\tilde{X}\to X$ be an equivariant envelope, and let $p_1$, 
%$p_2$ be the projections $\tilde{X}\times_X \tilde{X}\to \tilde{X}$. 
%The sequences 
%$$
%\xymatrix{
%K^T(\tilde{X}\times_X \tilde{X}) \ar[rr]^{\;\;\;\;{p_1}_* -\; {p_2}_*}& &
%K^T(\tilde{X}) \ar[r]^{p_*}& K^T(X) \ar[r]& 0
%}
%$$
%and 
%$$
%\xymatrix{
%0 \ar[r]& \K(X) \ar[r]^{p^*}& 
%\K(\tilde{X}) \ar[rr]^{p_1^*-\;p_2^*\;\;\;\;\;}& &\K(\tilde{X}\times_X \tilde{X})
%}
%$$ 
%are exact.   
%\end{thm}

%Homological descent of Gillet.

\begin{thm}[\protect{\cite[Theorem 5.4]{ap:opk}}] \label{gillet.thm}
Let $p:\tilde{X}\to X$ be a $T$-equivariant envelope. 
Then the induced map $p^*:\K(X)\to \K(\tilde{X})$ is injective.
Furthermore, if $p$ is birational (and notation is as above), 
then  
%Let $p:\tilde{X}\to X$ be a smooth equivariant birational envelope. Then 
%the induced map $p^*:A^*_G(X)\to A^*_G(\tilde{X})$ is injective. 
%Furthermore,  
%$p^*:A^*_G(X)\to A^*_G(\tilde{X})$
%is injective. T
the image of $p^*$ is described inductively 
%(\cite{ki:op}, Theorem 3.1.)
as follows: a class $\tilde{c}\in \K(\tilde{X})$ equals $p^*(c)$, for some 
$c\in \K(X)$ if and only if, for all $i$, we have $\tilde{c}|_{E_i}=p^*_i(c_i)$ for 
some $c_i\in \K(Z_i)$.  
\hfill $\square$ 
\end{thm}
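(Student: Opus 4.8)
The plan is to deduce the whole statement from the cohomological descent exact sequence of Anderson--Payne, i.e.\ Property~(\ref{gillet1.thm}) of the Introduction, together with a couple of elementary envelope constructions. Throughout we take $U$ to be $T$-invariant (replace it by its $T$-saturation if necessary: this is harmless, since the hypothesis on $\tilde c$ for the enlarged $U$ follows from the original one by restricting the $c_i$, while the conclusion is unchanged), so that $Z$, each $Z_i$ and each $E_i$ are $T$-invariant.

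Injectivity of $p^*$ for an arbitrary $T$-equivariant envelope $p$ is precisely the exactness of the descent sequence at the term $\K(X)$, so there is nothing to add. The forward implication of the image description is equally formal: if $\tilde c=p^*(c)$, then contravariant functoriality of $\K(-)$ applied to the commuting square with vertical arrows $p_i\colon E_i\to Z_i$, $p\colon\tilde X\to X$ and horizontal arrows the closed embeddings $E_i\hookrightarrow\tilde X$, $Z_i\hookrightarrow X$ gives $\tilde c|_{E_i}=p_i^*(c|_{Z_i})$, so one may take $c_i:=c|_{Z_i}$.

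The content is the converse: given $\tilde c\in\K(\tilde X)$ with $\tilde c|_{E_i}=p_i^*(c_i)$, produce $c\in\K(X)$ with $p^*(c)=\tilde c$. Put $q=p\circ\pi_1=p\circ\pi_2\colon\tilde X\times_X\tilde X\to X$ and $\delta=\pi_1^*(\tilde c)-\pi_2^*(\tilde c)$. By exactness of the descent sequence of Property~(\ref{gillet1.thm}) applied to $p$, it suffices to prove $\delta=0$ in $\K(\tilde X\times_X\tilde X)$ --- the resulting $c$ is then automatically unique, by injectivity. I would establish $\delta=0$ by splitting $\tilde X\times_X\tilde X$ into two $T$-invariant closed pieces and invoking injectivity for the associated envelopes. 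First, along the diagonal $\Gamma\cong\tilde X$ one has $\pi_j|_\Gamma=\mathrm{id}$, so $\delta|_\Gamma=\tilde c-\tilde c=0$. Second, $q^{-1}(Z_i)=\tilde X\times_X\tilde X\times_X Z_i=E_i\times_{Z_i}E_i$, on which the two projections to $E_i$ post-compose with $p_i$ to the \emph{same} structure morphism $q_i\colon E_i\times_{Z_i}E_i\to Z_i$; hence $\pi_j^*(\tilde c)|_{q^{-1}(Z_i)}=q_i^*(c_i)$ for $j=1,2$, and so $\delta|_{q^{-1}(Z_i)}=0$. Since the $T$-invariant closed subschemes $q^{-1}(Z_i)$ cover $q^{-1}(Z)$, the map $\coprod_i q^{-1}(Z_i)\to q^{-1}(Z)$ is a $T$-equivariant envelope, whence injectivity yields $\K(q^{-1}(Z))\hookrightarrow\prod_i\K(q^{-1}(Z_i))$ and thus $\delta|_{q^{-1}(Z)}=0$. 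Finally, because $p$ is an isomorphism over $U$, every point of $\tilde X\times_X\tilde X$ lying over $U$ already lies on $\Gamma$, so $\tilde X\times_X\tilde X=\Gamma\cup q^{-1}(Z)$ set-theoretically; both pieces are $T$-invariant and closed, hence $\Gamma\sqcup q^{-1}(Z)\to\tilde X\times_X\tilde X$ is a $T$-equivariant envelope and injectivity gives $\K(\tilde X\times_X\tilde X)\hookrightarrow\K(\Gamma)\times\K(q^{-1}(Z))$. Since $\delta\mapsto(\delta|_\Gamma,\delta|_{q^{-1}(Z)})=(0,0)$, we get $\delta=0$, finishing the proof.

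The only points needing care are routine: that a proper surjection $\coprod_j V_j\to\bigcup_j V_j$ from a disjoint union of $T$-invariant closed subschemes is a $T$-equivariant envelope (lift each irreducible component of a subvariety into whichever $V_j$ contains it --- the $T$-equivariant analogue of \cite[Lemma~18.3]{f:int}), and the bookkeeping behind the identification $\tilde X\times_X\tilde X\times_X Z_i=E_i\times_{Z_i}E_i$ together with the coincidence of the two composites $E_i\times_{Z_i}E_i\to Z_i$. I expect no conceptual obstacle beyond these, since the substantive input --- that $\K(-)$ satisfies cohomological descent along equivariant envelopes --- is already packaged in Property~(\ref{gillet1.thm}).
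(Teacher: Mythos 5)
Your argument is correct: the paper itself quotes this statement from \cite[Theorem 5.4]{ap:opk} without proof, and what you have written is essentially the Kimura-style descent argument used there --- injectivity and the forward implication read off from the exact sequence of Property (d), and the converse reduced to showing $\pi_1^*\tilde c=\pi_2^*\tilde c$ by covering $\tilde X\times_X\tilde X$ by the diagonal and $q^{-1}(Z)$ and invoking injectivity for the resulting auxiliary envelopes. The supporting facts you defer (that $\bigsqcup_j V_j\to\bigcup_j V_j$ is an equivariant envelope, and the identification $q^{-1}(Z_i)\cong E_i\times_{Z_i}E_i$ with its two composites to $Z_i$ agreeing) check out, so there is no gap.
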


Since $E_i$ and $Z_i$ may be arranged to 
have smaller dimension than that of $X$, we can use 
this result to
compute $\K(X)$ using a resolution of singularities 
(Lemma \ref{smooth_envelope.lem}) and induction on dimension.  
This is one of the reasons 
why %Gillet and Kimura's 
cohomological descent 
 %extended to the equivariant case by  
%\cite{ap:opk} and summarized 
%stated in 
(Property (\ref{gillet1.thm}), Introduction)   
makes equivariant operational $K$-theory more computable than the 
Grothendieck ring of equivariant vector bundles, when it comes to  %in the case of   
singular $T$-schemes.  

%\medskip

%Write here as corollaries the following sequences: 

\begin{cor}\label{ez.seq.cor}
Notation being as above,  the sequence 
$$
\xymatrix{
0 \ar[r]& \K(X) \ar[r]& 
\K(\tilde{X})\oplus \K(Z) \ar[r]& 
\K(E)
}
$$  
is exact, where $E=p^{-1}(Z)$. \hfill $\square$
\end{cor}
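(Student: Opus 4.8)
The plan is to read this off Theorem~\ref{gillet.thm}, applied twice. Write $\iota\colon Z\hookrightarrow X$ and $j\colon E\hookrightarrow\tilde{X}$ for the inclusions and $q\colon E\to Z$ for the restriction of $p$, so that $p\circ j=\iota\circ q$; the two maps of the sequence are $c\mapsto(p^*c,\iota^*c)$ and $(a,b)\mapsto j^*a-q^*b$. Exactness at $\K(X)$ and vanishing of the composite are formal: $c\mapsto(p^*c,\iota^*c)$ is injective since $p^*$ already is by Theorem~\ref{gillet.thm}, and $j^*p^*c=(p\circ j)^*c=(\iota\circ q)^*c=q^*\iota^*c$. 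So the real content is exactness in the middle.

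Suppose then that $(a,b)\in\K(\tilde{X})\oplus\K(Z)$ satisfies $j^*a=q^*b$; I want to produce $c\in\K(X)$ with $p^*c=a$ and $\iota^*c=b$. First I would restrict this identity over each irreducible component $Z_i$ of $Z$: with $j_i\colon E_i\hookrightarrow E$ and $\iota_i\colon Z_i\hookrightarrow Z$ the inclusions, and using $q\circ j_i=\iota_i\circ p_i$, one obtains $a|_{E_i}=j_i^*(j^*a)=j_i^*(q^*b)=p_i^*(b|_{Z_i})$ with $b|_{Z_i}\in\K(Z_i)$. This is exactly the criterion of Theorem~\ref{gillet.thm} for $a$ to lie in the image of $p^*$; hence there is a unique $c\in\K(X)$ with $p^*c=a$.

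It remains to see that this $c$ also satisfies $\iota^*c=b$, which does not follow from Theorem~\ref{gillet.thm} applied to $p$ alone. Here I would apply Theorem~\ref{gillet.thm} a second time, now to $q$: since $Z$ is closed and $T$-invariant in $X$, any $T$-invariant subvariety $W\subseteq Z$ is in particular a $T$-invariant subvariety of $X$, and the subvariety of $\tilde{X}$ mapping birationally onto $W$ necessarily lies in $p^{-1}(Z)=E$ (as $p$ is proper). Thus $q\colon E\to Z$ is itself a $T$-equivariant envelope, and Theorem~\ref{gillet.thm} makes $q^*\colon\K(Z)\to\K(E)$ injective. Now $q^*(\iota^*c)=j^*(p^*c)=j^*a=q^*b$, so $\iota^*c-b\in\ker q^*=0$, i.e.\ $\iota^*c=b$. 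Therefore $(a,b)$ is the image of $c$, and the sequence is exact.

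The step I expect to be the crux --- though it is light --- is the last one: recognizing that $p^*c=a$ alone is insufficient and that one must invoke the envelope property of the restriction $q\colon E\to Z$ to upgrade it to $\iota^*c=b$. (One could instead apply Theorem~\ref{gillet.thm} directly to the envelope $p\sqcup\iota\colon\tilde{X}\sqcup Z\to X$ together with $\bigsqcup_i E_i\to E$, arriving at the same conclusion, but the two-step argument above keeps the bookkeeping minimal.)
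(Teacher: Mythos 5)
Your proof is correct and is exactly the derivation the paper intends: the corollary is stated with no proof as an immediate consequence of Theorem~\ref{gillet.thm}, and your argument (image criterion for $p^*$ to get $c$ with $p^*c=a$, then injectivity of $q^*\colon\K(Z)\to\K(E)$ for the restricted envelope $q\colon E\to Z$ to force $\iota^*c=b$) is the standard way to unpack it, following Kimura. You correctly identify the one non-formal point, namely that $q$ is again an equivariant envelope so that $q^*$ is injective.
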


%1. For $E$ and $Z$ as above.

%Of particular interest will be the case $Y=\cup_{i=1}^n Y_i$, where $Y_i$ are 
%the irreducible components of $Y$. Let $Y_{ij}=Y_i\cap Y_j$. 

\begin{cor}\label{irr.seq.cor}
%Of particular interest will be the case 
Let $Y$ be a $T$-scheme, and let  
$Y=\cup_{i=1}^n Y_i$
%, where $Y_i$ are 
%
be the decomposition of $Y$ into 
irreducible components. %of $Y$. 
Let $Y_{ij}=Y_i\cap Y_j$. 
Then the sequence 
$$
0\to \K(Y)\to \bigoplus_i\K(Y_i)\to \bigoplus_{i,j}\K(Y_{ij}).
$$  
is exact. 
\end{cor}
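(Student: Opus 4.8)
The plan is to deduce this from a single application of the cohomological descent exact sequence for equivariant envelopes stated in Property~(d) of the Introduction, i.e.\ \cite[Theorem 5.3]{ap:opk}, applied to the tautological envelope built from the irreducible components of $Y$. One can also argue by induction on $n$ via Corollary~\ref{ez.seq.cor}; I indicate that route at the end.

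First I would set $\tilde{Y}:=\coprod_{i=1}^{n}Y_i$ and let $\pi\colon\tilde{Y}\to Y$ be the morphism that on the $i$-th summand is the closed immersion $Y_i\hookrightarrow Y$, and then check that $\pi$ is a $T$-equivariant envelope. Properness is clear. If $W\subseteq Y$ is an irreducible $T$-invariant subvariety, then $W=\bigcup_i(W\cap Y_i)$ writes $W$ as a finite union of closed subsets, so by irreducibility $W\subseteq Y_{i_0}$ for some $i_0$; the copy of $W$ sitting inside the $i_0$-th summand of $\tilde{Y}$ is $T$-invariant and maps isomorphically, hence birationally, onto $W$ (for reducible $W$ one does this componentwise and takes the union). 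This verification is the only step carrying real content.

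Next I would identify the double fiber product and the two projections. Since each $Y_i$ is a closed subscheme of $Y$, one has $Y_i\times_Y Y_j=Y_i\cap Y_j=Y_{ij}$, hence $\tilde{Y}\times_Y\tilde{Y}=\coprod_{i,j}Y_{ij}$, and $\pi_1,\pi_2$ send the summand $Y_{ij}$ to the summands $Y_i$ and $Y_j$ of $\tilde{Y}$ via the inclusions $Y_{ij}\hookrightarrow Y_i$ and $Y_{ij}\hookrightarrow Y_j$ (whether $Y_{ij}$ is taken reduced or scheme-theoretic is irrelevant, since $K^T$, and therefore $\K$, does not see nilpotents). Pulling back, $\pi^*\colon\K(Y)\to\bigoplus_i\K(Y_i)$ becomes $c\mapsto(c|_{Y_i})_i$ and $\pi_1^*-\pi_2^*\colon\bigoplus_i\K(Y_i)\to\bigoplus_{i,j}\K(Y_{ij})$ becomes $(c_i)_i\mapsto(c_i|_{Y_{ij}}-c_j|_{Y_{ij}})_{i,j}$. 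The exact sequence $0\to\K(Y)\xrightarrow{\pi^*}\K(\tilde{Y})\xrightarrow{\pi_1^*-\pi_2^*}\K(\tilde{Y}\times_Y\tilde{Y})$ of \cite[Theorem 5.3]{ap:opk} is then precisely the sequence in the statement, and we are done.

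Finally, the inductive alternative: writing $Y'=Y_1\cup\cdots\cup Y_{n-1}$, the map $Y'\sqcup Y_n\to Y$ is a birational $T$-equivariant envelope, an isomorphism over the dense open set $Y\setminus(Y'\cap Y_n)$ (dense because $Y'\cap Y_n$ contains no irreducible component of $Y$), with non-isomorphism locus $Z:=Y'\cap Y_n$ and $E=Z\sqcup Z$; Corollary~\ref{ez.seq.cor}, after discarding the redundant $\K(Z)$-coordinate, identifies $\K(Y)$ with $\{(c',c_n)\in\K(Y')\oplus\K(Y_n):c'|_Z=c_n|_Z\}$. Feeding in the inductive description of $\K(Y')$ and using that $\K(Z)$ injects into $\bigoplus_{i<n}\K(Y_{in})$---the injectivity half of Theorem~\ref{gillet.thm} applied to the (possibly non-birational) envelope $\coprod_{i<n}Y_{in}\to Z$---one rewrites the condition $c'|_Z=c_n|_Z$ as $c_i|_{Y_{in}}=c_n|_{Y_{in}}$ for all $i<n$, and matching up the restriction maps finishes the induction. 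In this approach the delicate point is exactly this reduction of the ``$Z$-condition'' to conditions on the pieces $Y_{in}$.
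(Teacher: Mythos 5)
Your first argument is exactly the paper's proof: the paper simply observes that $\bigsqcup_i Y_i\to Y$ is an equivariant envelope and invokes cohomological descent, and your proposal fills in the same two verifications (the envelope property via irreducibility of invariant subvarieties, and the identification $\tilde{Y}\times_Y\tilde{Y}\simeq\coprod_{i,j}Y_{ij}$) that the paper leaves implicit. The inductive alternative is a correct but unnecessary detour; the direct descent argument is what is intended.
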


\begin{proof}
First 
recall that 
$\bigsqcup_i Y_i\to Y$ is an equivariant envelope.
Now use cohomological descent 
%(Property (\ref{gillet1.thm}), Introduction) 
to get the result.   
\end{proof}

The following was first observed in \cite[Lemma 7.2]{eg:cycles}. 
%The proof given in that paper is included below. %for convenience.   

\begin{lem}\label{envelope.fix.subgp.lem}
Let $X$ be a %separated 
$T$-scheme, and let 
$\pi:\tilde{X}\to X$ be an equivariant envelope.
If $H$ is a closed subgroup of $T$, then  
the induced map 
$\tilde{X}^H\to X^H$ 
is also a $T$-equivariant envelope.
\end{lem}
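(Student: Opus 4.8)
The plan is to verify the two defining conditions of an equivariant envelope for the restricted map $\pi^H \colon \tilde X^H \to X^H$: properness, and the birational-lifting property for $T$-invariant subvarieties. Properness is immediate: $\pi$ is proper, and $\tilde X^H = \pi^{-1}(X^H) \cap \tilde X^H$ is a closed subscheme of $\tilde X$ (fixed loci are closed in characteristic zero, since $H$ is linearly reductive here), so $\pi^H$ is a closed immersion followed by a proper map, hence proper. Note also that $T$ acts on $\tilde X^H$ and $X^H$ because $T$ is abelian, so $H$-fixed loci are $T$-stable; thus $\pi^H$ is $T$-equivariant.

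The substantive point is the lifting property. First I would reduce to the case where $W \subseteq X^H$ is an irreducible $T$-invariant subvariety. Since $W$ is in particular a $T$-invariant subvariety of $X$, the envelope hypothesis on $\pi$ gives a $T$-invariant subvariety $\tilde W \subseteq \tilde X$ mapping birationally onto $W$ via $\pi$. The claim to establish is that $\tilde W$ may be taken to lie inside $\tilde X^H$. The key observation is that $H$ acts trivially on a dense open subset of $\tilde W$: indeed $\pi$ restricts to an isomorphism between a dense open $\tilde W^\circ \subseteq \tilde W$ and a dense open $W^\circ \subseteq W$, and $H$ acts trivially on $W^\circ \subseteq X^H$; transporting the (trivial) $H$-action along this $H$-equivariant isomorphism — it is $H$-equivariant because it is the restriction of the $T$-equivariant, hence $H$-equivariant, map $\pi$ — shows $H$ fixes $\tilde W^\circ$ pointwise. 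Since $\tilde W$ is a variety (reduced) and $\tilde W^\circ$ is dense, the closed condition ``$h \cdot x = x$'' for each $h \in H$ propagates from the dense subset to all of $\tilde W$; equivalently, the $H$-fixed locus $\tilde W^H$ is closed in $\tilde W$ and contains the dense subset $\tilde W^\circ$, so $\tilde W^H = \tilde W$. Hence $\tilde W \subseteq \tilde X^H$, it is $T$-invariant, and $\pi^H$ restricted to $\tilde W$ agrees with $\pi|_{\tilde W}$, which is birational onto $W$. This gives the desired lift.

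The step I expect to require the most care is the schematic bookkeeping around ``$\tilde W \subseteq \tilde X^H$'': one must be slightly careful that $\tilde W$, a priori just a reduced subscheme of $\tilde X$, factors through the fixed \emph{subscheme} $\tilde X^H$ and not merely through its underlying set. This is harmless here because for envelopes only the birational image matters, so one is free to work with reduced structures throughout and to replace $\tilde X^H$ by its reduction if necessary; moreover, in characteristic zero the fixed-point subscheme of a linearly reductive group is already reduced (indeed smooth over the base when the ambient scheme is), so no subtlety actually arises. A second minor point is the passage from irreducible $T$-invariant $W$ to arbitrary $T$-invariant $W \subseteq X^H$: this is standard, since the lifting property need only be checked on irreducible components (a $T$-invariant subvariety is a finite union of $T$-invariant irreducible subvarieties, as $T$ is connected and hence permutes components trivially). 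With these observations in place the proof is complete.
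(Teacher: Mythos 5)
Your proposal is correct and follows essentially the same argument as the paper (which adapts Edidin--Graham's Lemma 7.2): $T$-equivariance of $\pi^H$ from commutativity of $T$, then lifting an irreducible $T$-invariant $W\subset X^H$ to a $\tilde W$ on which $H$ acts trivially over a dense open set, and concluding $\tilde W\subset \tilde X^H$ by closedness of the fixed locus. The additional remarks on properness and reduced structures are harmless elaborations of the same proof.
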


\begin{proof} The argument  here is basically that of \cite[Lemma 7.2]{eg:cycles}.  
First, notice that the map $\pi_H:\tilde{X}^H\to X^H$ is $T$-equivariant, because 
$T$ is an abelian group. 
Now let $W\subset X^H$ be a $T$-invariant irreducible subvariety 
and let $\tilde{W}$ be an irreducible subvariety of 
$\tilde{X}$ mapping birationally to $W$ via $\pi$. 
To prove that $\tilde{X}^H$ is an equivariant envelope, 
it suffices to prove that we can take $\tilde{W}\subset \tilde{X}^H$. 
The restricted map $\pi: \tilde{W}\to W$ is a $T$-equivariant isomorphism 
over a dense open subspace $U$ of $W$. 
Replace $\tilde{W}$ with the closure of 
$\pi^{-1}(U)$. Because $H$ acts trivially 
on $\pi^{-1}(U)$ (for $U\subset W\subset  X^H$), 
and $\tilde{W}^H$ is closed, we get $\tilde{W}\subset \tilde{X}^H$, 
as desired.
\end{proof}

%Combining Theorem \ref{gillet.thm} with 
%Lemma \ref{envelope.fix.subgp.lem} and  
%cohomological descent 
%(Property (\ref{gillet1.thm}), Introduction) %, alongside  
%Theorems \ref{gillet1.thm} 
%supplies an 
An important technical result is stated next. 

\begin{cor}\label{gillet.square.cor}
Let $p:\tilde{X}\to X$ be an equivariant envelope. 
%, and denote by  
%$X'$ the fiber product $\tilde{X}\times_X \tilde{X}$. 
If $H$ is a closed subgroup of $T$, then the diagram of exact sequences 
$$
\xymatrix{
0\ar[r]& \K(X) \ar[r]\ar[d]^{i^*_H}& 
\K(\tilde{X})\ar[r]\ar[d]\ar[d]^{i^*_H}& 
\K(\tilde{X}\times_X \tilde{X})\ar[d]\ar[d]^{i^*_H} \\
0\ar[r]& 
\K(X^H) \ar[r]& 
\K(\tilde{X}^H)\ar[r]& 
\K(\tilde{X}^H\times_{X^H} \tilde{X}^H).
}
$$
commutes. 
Moreover, if $p$ is birational, and notation being as in Theorem 
\ref{gillet.thm}, then the diagram of exact sequences  
$$
\xymatrix{
0\ar[r]& \K(X) \ar[r]\ar[d]^{i^*_H}& 
\K(\tilde{X})\oplus \K(Z)\ar[r]\ar[d]\ar[d]^{i^*_H}& 
\K(E)\ar[d]\ar[d]^{i^*_H} \\
0\ar[r]& 
\K(X^H) \ar[r]& 
\K(\tilde{X}^H)\oplus \K(Z^H)\ar[r]& 
\K(E^H).
}
$$
commutes.  
\end{cor}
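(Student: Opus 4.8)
The plan is to derive the commutativity from the (strict) contravariant functoriality of $\K(-)$, after recognizing the schemes in the bottom rows as the fibered products that the cohomological descent sequences attach to the induced map $p_H:\tilde{X}^H\to X^H$. First I would observe that all of the rows are exact. The top rows are the cohomological descent sequences for $p$ itself: Property (\ref{gillet1.thm}) of the Introduction for the first diagram, and Corollary \ref{ez.seq.cor} for the second. For the bottom rows, Lemma \ref{envelope.fix.subgp.lem} tells us that $p_H:\tilde{X}^H\to X^H$ is again an equivariant envelope; moreover, if $p$ is an isomorphism over a ($T$-stable, as we may assume) open subset $U\subset X$, then $p_H$ is an isomorphism over the open subset $U\cap X^H\subset X^H$, hence a birational envelope as well. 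So the bottom rows are nothing but the sequences of Property (\ref{gillet1.thm}), resp. Corollary \ref{ez.seq.cor}, applied to $p_H$ --- provided the schemes appearing in them are the ones written in the diagram.

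That proviso is the only point that is not purely formal. Since $H$ is a closed subgroup of a torus, it is diagonalizable, hence linearly reductive; the functor $(-)^H$ is right adjoint to the functor that equips a scheme with the trivial $H$-action, so it preserves fibered products and scheme-theoretic preimages, and, since we are in characteristic zero, it sends reduced schemes to reduced schemes. Consequently $(\tilde{X}\times_X\tilde{X})^H\cong\tilde{X}^H\times_{X^H}\tilde{X}^H$, with $\pi_1$ and $\pi_2$ restricting to the two projections of the right-hand side; and, writing $Z=X-U$ and $E=p^{-1}(Z)$ (both $T$-stable, so that $Z^H$ and $E^H$ make sense), one gets $Z^H=X^H-(U\cap X^H)$ with its reduced structure and $E^H=(\tilde{X}\times_X Z)^H\cong\tilde{X}^H\times_{X^H}Z^H=p_H^{-1}(Z^H)$. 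These are precisely the schemes that Property (\ref{gillet1.thm}) and Corollary \ref{ez.seq.cor} assign to the envelope $p_H$, and the vertical arrows $i^*_H$ are by definition the pullbacks along the closed inclusions of these fixed loci.

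It then remains only to see that each individual square commutes, and here I would invoke the fact that in a Fulton--MacPherson operational theory the pullback satisfies $g^*\circ f^*=(f\circ g)^*$ on the nose, directly from the defining formula $(f^*c)_g=c_{f\circ g}$. Hence a square of operational $K$-theory groups commutes as soon as the underlying square of $T$-schemes does: for the leftmost squares this says that the inclusion $\tilde{X}^H\hookrightarrow\tilde{X}$ followed by $p$ equals $p_H$ followed by the inclusion $i_H:X^H\hookrightarrow X$; for the middle squares it is the compatibility of $\pi_1,\pi_2$ with the two projections of $\tilde{X}^H\times_{X^H}\tilde{X}^H$ recorded above; and for the right squares of the second diagram it is the fact that the inclusion $E\hookrightarrow\tilde{X}$, the map $p|_E:E\to Z$, and the inclusion $Z\hookrightarrow X$ restrict, along the inclusions $E^H\hookrightarrow E$, $Z^H\hookrightarrow Z$, $\tilde{X}^H\hookrightarrow\tilde{X}$ and $X^H\hookrightarrow X$, to their $H$-fixed counterparts. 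I expect the only genuine work to be the bookkeeping of the middle paragraph: pinning down the fixed-point schemes as the correct fibered products and checking that $p_H$ remains a birational envelope. Once that is in place, the commutativity is a formal consequence of functoriality.
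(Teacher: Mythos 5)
Your proposal is correct and follows essentially the same route as the paper: apply cohomological descent (resp.\ Corollary \ref{ez.seq.cor}) to both $p$ and to the induced envelope $p_H$ of Lemma \ref{envelope.fix.subgp.lem}, identify $(\tilde{X}\times_X\tilde{X})^H$ with $\tilde{X}^H\times_{X^H}\tilde{X}^H$ (and $E^H$ with $p_H^{-1}(Z^H)$), and deduce commutativity of each square from the strict functoriality of operational pullback. You merely spell out the details the paper leaves as ``a straightforward check,'' such as the right-adjoint argument for fixed points commuting with fibered products and the verification that $p_H$ remains birational.
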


\begin{proof}
First, apply cohomological descent to $p:\tilde{X}\to X$. 
Due to Lemma \ref{envelope.fix.subgp.lem}, we can also 
apply this tool to the $T$-equivariant 
envelope $p_H:\tilde{X}^H\to X^H$, 
noticing that $(\tilde{X}\times_X \tilde{X})^H
=\tilde{X}^H\times_{X^H}\tilde{X}^H$. 
Now write the associated short exact sequences 
as the rows of the first square diagram displayed above. 
An straightforward check shows that 
the diagram is commutative. A similar argument 
yields the second assertion, 
in view of 
Corollary \ref{ez.seq.cor}.  
\end{proof}

\smallskip

%Now let $G=T$ be an algebraic torus.
In the upcoming proposition %Proposition \ref{inj.fix.set.lem}, 
we state another crucial consequence of Kimura's work. %of equivariant operational Chow groups. 
Put in perspective, it asserts that the 
equivariant operational $K$-theory ring $\K(X)$ of 
{\em any} complete $T$-scheme $X$ is a subring of $\K(X^T)$. 
Moreover, %for the latter  
there is a natural isomorphism 
$$\K(X^T)\simeq \K(X^T)\otimes_{\Z} R(T),$$ 
by \cite[Corollary 5.5]{ap:opk}.  
In many cases of interest, $X^T$ is finite 
(e.g. for spherical varieties) 
and so one has $\K(X)\subseteq \bigoplus^\ell_1 \K(pt)=R(T)^{\ell}$, 
where $\ell=|X^T|$.  
This motivates our introduction of localization techniques, and ultimately 
GKM theory, into the study of 
the functor $\K(-)$. 
%equivariant operational $K$-theory. %in equivariant cohomology.   

\begin{prop}\label{inj.fix.set.lem}
Let $X$ be a complete %projective %(filtrable or projective) 
$T$-scheme and let 
$i_T:X^T\to X$ be the inclusion of 
the fixed point subscheme. Then the pull-back map 
$$i^*_T:\K(X)\to \K(X^T)$$
is injective.  
\end{prop}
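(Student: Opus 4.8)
The plan is to reduce the statement to the smooth case, where it is exactly Thomason's localization theorem, by using resolution of singularities together with Kimura's cohomological descent, and to run the reduction by Noetherian induction on $\dim X$. First I would invoke Lemma \ref{smooth_envelope.lem} to produce a $T$-equivariant birational envelope $p:\tilde{X}\to X$ with $\tilde{X}$ smooth; since $X$ is complete and $p$ is proper, $\tilde{X}$ is complete as well. Let $U\subset X$ be the open locus over which $p$ is an isomorphism, $Z=X\setminus U$ with its reduced structure, $E=p^{-1}(Z)$, so that $\dim Z<\dim X$ and $\dim E<\dim X$ (after arranging the envelope so that the exceptional locus has smaller dimension, as in the discussion preceding Theorem \ref{gillet.thm}).

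The key diagram is the one furnished by Corollary \ref{gillet.square.cor} in the birational case, taken with $H=T$: it is a commuting ladder whose rows are the exact sequences
$$
0\to \K(X)\to \K(\tilde{X})\oplus\K(Z)\to \K(E)
$$
and
$$
0\to \K(X^T)\to \K(\tilde{X}^T)\oplus\K(Z^T)\to \K(E^T),
$$
with vertical arrows $i^*_T$. By Lemma \ref{envelope.fix.subgp.lem}, $\tilde{X}^T\to X^T$, $E^T\to Z^T$ are again equivariant envelopes, so the bottom row is genuinely the descent sequence for the fixed loci. Now for the left vertical map $i^*_T:\K(X)\to\K(X^T)$ to be injective, a diagram chase shows it suffices that the middle vertical map $\K(\tilde{X})\oplus\K(Z)\to\K(\tilde{X}^T)\oplus\K(Z^T)$ be injective, i.e. that both $i^*_T:\K(\tilde{X})\to\K(\tilde{X}^T)$ and $i^*_T:\K(Z)\to\K(Z^T)$ be injective. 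The first is the smooth, complete case: via Property (b) of the Introduction, $\K(\tilde{X})\simeq K_T(\tilde{X})$ and $\K(\tilde{X}^T)\simeq K_T(\tilde{X}^T)$ compatibly with pullback, and Thomason's localization theorem (\cite[Th\'eor\`eme 2.1]{th:con}) gives injectivity of $i^*_T:K_T(\tilde{X})\to K_T(\tilde{X}^T)$. The second is handled by the inductive hypothesis, since $Z$ is a complete $T$-scheme of strictly smaller dimension.

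For the induction to close cleanly I would first dispose of the reducible case: by Corollary \ref{irr.seq.cor} applied simultaneously to $X$ and to $X^T$ (here one needs that $(X_i)^T=X^T\cap X_i$ and $(X_i\cap X_j)^T=X^T_i\cap X^T_j$, which is immediate), a diagram chase reduces injectivity of $i^*_T$ on $X$ to injectivity of $i^*_T$ on each irreducible component $X_i$ and each pairwise intersection $X_{ij}$; the components have the same dimension as $X$ but the intersections have strictly smaller dimension, so after this step one may assume $X$ is irreducible, and then the birational envelope $\tilde{X}\to X$ is the right tool. The base case $\dim X=0$ is trivial since then $X=X^T$. The main obstacle — really the only nontrivial point — is organizing the bookkeeping so that the induction is well-founded: one must be sure that in every application of Theorem \ref{gillet.thm}/Corollary \ref{gillet.square.cor} the schemes $Z$, $E$ (and the intersections $X_{ij}$) that get fed back into the hypothesis are complete and of strictly smaller dimension, and that the commuting ladder from Corollary \ref{gillet.square.cor} is available in exactly the form needed. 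Once that is in place the diagram chases are routine and the smooth case is quoted verbatim from Thomason.
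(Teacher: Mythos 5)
Your argument is correct, but it is organized quite differently from the paper's, and rather more heavily. You run a Noetherian induction on dimension, first reducing to irreducible $X$ via Corollary \ref{irr.seq.cor}, then feeding the full descent ladder of Corollary \ref{gillet.square.cor} (with $H=T$) into a diagram chase that needs injectivity of $i_T^*$ on the smooth model $\tilde X$ and, inductively, on the lower-dimensional closed set $Z$. The paper avoids all of this: it takes a smooth projective equivariant envelope $p:\tilde X\to X$, uses only the \emph{first} assertion of Theorem \ref{gillet.thm} (injectivity of $p^*$, for which $p$ need not even be birational), notes that injectivity of $i_T^*$ on the smooth $\tilde X$ forces injectivity of the restriction $\iota^*:\K(\tilde X)\to\K(p^{-1}(X^T))$ because $\tilde X^T\subseteq p^{-1}(X^T)$, and then reads off injectivity of $i_T^*:\K(X)\to\K(X^T)$ from the commutative square formed by $p^*$, $\iota^*$, $i_T^*$ and $p^*:\K(X^T)\to\K(p^{-1}(X^T))$ --- the composite around the top is injective, hence so is the first map around the bottom. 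In particular the ``only nontrivial point'' you flag (making the induction well-founded, arranging $\dim Z<\dim X$, handling reducibility) simply does not arise in the paper's proof. Your scheme is not wasted, though: it is essentially the inductive template the paper does deploy later for the localization theorem (Theorem \ref{localization.thm}) and the CS property (Theorem \ref{cs.thm}), where a bare factorization argument no longer suffices; for injectivity alone it is correct but more machinery than needed.
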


\begin{proof}
First, choose 
%By Lemma \ref{smooth_envelope.lem}, we can choose 
a $T$-equivariant envelope 
$p:\tilde{X}\to X$, with $\tilde{X}$ projective and smooth 
(Lemma \ref{smooth_envelope.lem}).   
%( is granted by Lemma \ref{smooth_envelope.lem}, 
%(no assumption on birationality of $p$ is needed here). 
Thus %It follows that 
$p^*:\K(X)\to \K(\tilde{X})$ 
is injective (Theorem \ref{gillet.thm}). Since 
$\tilde{X}$ is smooth and projective, 
the pull-back 
%by inclusion of fixed points 
%$\tilde{i}_T:\tilde{X}^T\to \tilde{X}$, 
%namely  
$i^*_T:\K(\tilde{X})\to \K({\tilde{X}}^T)$, 
is injective  
(by Property (b) of the Introduction together with 
the CS property for smooth $T$-schemes \cite[Theorem 2]{vv:hkth}).  
Besides, the chain of inclusions %$p^{-1}(X^T)$ contains $\tilde{X}^{T}$, %and $\tilde{X}$ is smooth, 
$\tilde{X}^{T}\subset p^{-1}(X^T)\subset \tilde{X}$  
indicate that $\tilde{i}^*_T$ factors through 
$\iota^*:\K(\tilde{X})\to \K(p^{-1}(X^T))$, 
where 
$\iota:p^{-1}(X^T)\hookrightarrow \tilde{X}$ 
is the natural inclusion. 
Thus, $\iota^*$
is injective as well. % homomorphism %, namely 
% 
%is injective too.  
%, where $\iota:p^{-1}(X^T)\to \tilde{X}$ denotes the natural inclusion.  
 %(here one easily checks that 
Finally, adding this information to the commutative diagram 
$$
\xymatrix{
\K(X)\ar[r]^{p^*} \ar[d]^{i_T^*}& \K(\tilde{X})\ar[d]^{\iota^*}\\
\K(X^T) \ar[r]^{p^*}& \K(p^{-1}(X^T)). 
}
$$
renders $i^*_T:\K(X)\to \K(X^T)$ injective. 
%producing the required 
%result.
\end{proof}

\begin{cor}\label{injectivity.cor}
Let $X$ be a complete $T$-scheme. Let $Y$ be a $T$-invariant 
closed subscheme containing $X^T$. Denote by $\iota:Y\to X$ the 
natural inclusion. 
Then the $R(T)$-algebra map $\iota^*:\K(X)\to \K(Y)$ is injective.
In particular, if $H$ is a closed subgroup of $T$,  
%and 
%$i_{H}:X^{H}\to X$ is the
%inclusion of the fixed point subscheme, 
then 
$i_{H}^*:\K(X)\to \K(X^H)$ is injective. 
\end{cor}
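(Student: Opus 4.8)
The plan is to deduce Corollary \ref{injectivity.cor} from Proposition \ref{inj.fix.set.lem} by a short factorization argument. Since $X^T \subseteq Y \subseteq X$ with all inclusions $T$-equivariant, the inclusion $i_T : X^T \to X$ factors as $X^T \xrightarrow{j} Y \xrightarrow{\iota} X$, where $j$ is the inclusion of the fixed locus of $X$ into $Y$. Note that $X^T = Y^T$ as schemes, because $Y$ is $T$-invariant and closed in $X$, so $j$ is precisely the inclusion $i_T : Y^T \to Y$ of the fixed point subscheme of $Y$. By contravariant functoriality of $\K(-)$ we obtain a commutative triangle
$$
\xymatrix{
\K(X) \ar[r]^{\iota^*} \ar[dr]_{i_T^*}& \K(Y) \ar[d]^{j^* = i_T^*}\\
& \K(X^T).
}
$$
First I would observe that $Y$ is itself a complete $T$-scheme: it is closed in the complete scheme $X$, hence complete. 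Therefore Proposition \ref{inj.fix.set.lem}, applied to $Y$ in place of $X$, tells us that $j^* = i_T^* : \K(Y) \to \K(Y^T) = \K(X^T)$ is injective. The composite $j^* \circ \iota^* = i_T^* : \K(X) \to \K(X^T)$ is injective by Proposition \ref{inj.fix.set.lem} applied to $X$ itself. Since the composite $j^* \circ \iota^*$ is injective, $\iota^*$ must be injective as well (if $\iota^*(c) = 0$ then $j^*\iota^*(c) = 0$, hence $c = 0$). That $\iota^*$ is a map of $R(T)$-algebras is part of the general contravariant functoriality of $\K(-)$ recalled in the Introduction, so no extra work is needed there.

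For the final assertion, I would specialize to $Y = X^H$ where $H$ is a closed subgroup of $T$. The only point to check is that $X^T \subseteq X^H$ and that $X^H$ is a $T$-invariant closed subscheme of $X$: the first is immediate since $H \subseteq T$ forces $X^T \subseteq X^H$, and the second holds because $T$ is abelian, so $T$ preserves $X^H$, and fixed point subschemes are closed. Hence the already-established statement applies verbatim with $Y = X^H$ and $\iota = i_H$, giving injectivity of $i_H^* : \K(X) \to \K(X^H)$.

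The argument is essentially formal once one has Proposition \ref{inj.fix.set.lem}, so there is no serious obstacle; the only thing that requires a moment's care is the identification $Y^T = X^T$ (so that the map $j$ really is the fixed-point inclusion to which Proposition \ref{inj.fix.set.lem} applies) and the verification that $Y$, respectively $X^H$, inherits completeness and $T$-invariance from $X$. These are routine but should be stated explicitly.
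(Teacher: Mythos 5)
Your argument is correct and is essentially the same as the paper's: factor $i_T$ through $Y$, use contravariant functoriality to write $i_T^* = j^* \circ \iota^*$, invoke Proposition \ref{inj.fix.set.lem} for the injectivity of $i_T^*$, and conclude that $\iota^*$ is injective; the specialization $Y = X^H$ is also handled identically. The extra observation that $j^*$ itself is injective (via $Y^T = X^T$ and the Proposition applied to $Y$) is harmless but not needed for the conclusion.
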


\begin{proof}
Simply notice that $\iota:Y\to X$ fits into the commutative 
triangle
$$
\xymatrix@R=1em{
                                  & Y  \ar[rd]^{\iota}&    \\
X^T \ar[rr]_{i_T}\ar[ru]^{i_{T,Y}}  &               & X.    \\ 
}
$$
In other words, the functorial map $i^*_T:\K(X)\to \K(X^T)$ factors 
as $\iota^*:\K(X)\to \K(Y)$ followed by $i^*_{T,Y}: \K(Y)\to \K(X^T)$. 
By Proposition \ref{inj.fix.set.lem}, 
$i^*_T$ is injective,  hence so is $\iota^*$. 
As for the second 
assertion, just note that $X^H$ is $T$-invariant and $X^T\subset X^H$.   
\end{proof}

%\smallskip

\begin{rem}\label{normalization.irr.comp.rem}
Of particular interest is the case $Y=\cup_{i=1}^n Y_i$, where $Y_i$ are 
the irreducible components of $Y$. Let $Y_{ij}=Y_i\cap Y_j$. 
By Corollary \ref{irr.seq.cor} the following sequence is exact 
$$
0\to \K(Y)\to \bigoplus_i \K(Y_i)\to \bigoplus_{i,j}\K(Y_{ij}).
$$ 
When $Y^T$ is finite, the sequence above yields the 
commutative diagram (Corollary \ref{gillet.square.cor}):
$$
\xymatrix{
0\ar[r]& \K(Y)\ar[r]\ar[d]^{i_T^*}& \bigoplus_i \K(Y_i)\ar[r]\ar[d]^{i_T^*}& \bigoplus_{i,j}\K(Y_{ij})\ar[d]^{i_T^*} \\
0\ar[r]& \K(Y^T)\ar[r]^{p}& \bigoplus_i \K(Y^T_i)\ar[r]^{q}& \bigoplus_{i,j}\K(Y^T_{ij})
}
$$
%Notice that 
Since all vertical maps are injective (Proposition \ref{inj.fix.set.lem}),  
%It follows that $v$ is in the image of the first vertical map if and only if 
%$p(v)$ is in the image of the middle vertical map AND $q$ 
%The key 
%an important 
it is important to observe 
%observation is 
that we can describe the image of the first 
vertical map in terms of the image of the second vertical map 
%modulo 
and the kernel of $q$. In other words, 
the map 
$$p:im(i^*_{T,Y})\to \{w\in \bigoplus_i \K(Y_i)\,|\, w\in im(\sum_i i^*_T)\,{\rm and}\,q(w)=0\}$$
sending $u\to p(u)$ is an isomorphism. Now, since $Y^T$ is finite, 
the kernel of the map $q$ consists of all families $(f_i)_i$ such that 
$f_i(x_k)=f_j(x_k)$ (equality of $k$-components), whenever $x_k$ is in the intersection of $Y_i$ and $Y_j$.
\end{rem}

\medskip

Back to the general case, let $X$ be a complete
$T$-scheme. 
We wish to describe the image of the injective 
map $$i^*_T:\K(X)\to \K(X^T).$$
For this, let $T'\subset T$ be a subtorus of codimension one. Observe that 
$i_T:X^T\to X$ factors as $i_{T,T'}:X^T\to X^{T'}$ followed 
by $i_{T'}:X^{T'}\to X^T$. Thus, the image of $i^*_T$ is contained in the image 
of $i^*_{T,T'}$. In symbols, 
$${\rm Im}[i^*_T:\K(X)\to \K(X^T)]\subseteqq \bigcap_{T'\subset T} {\rm Im}[
i^*_{T,T'}:\K(X^{T'})\to \K(X^T)
],$$
where the intersection runs over all codimension-one subtori of $T$. %(the same holds over $\Q$). 
This observation will lead, as in the smooth case, %\cite[Theorem 2]{vv:hkth}, 
to an explicit description of the image of $i_T^*$. 
Such is the central theme of the subsequent sections.

\section{The Localization theorem 
of Borel-Atiyah-Segal type and the Chang-Skjelbred property}

Let $T$ be an algebraic torus. 
We recall a construction of Thomason 
\cite[Lemma 1.1, Proposition 1.2]{th:con}.
Let $\mathfrak{p}\subset R(T)$ be a 
prime ideal. Set 
$K_\mathfrak{p}=\{n\in \Delta \;|\; 1-n \in \mathfrak{p}\}$, 
where $\Delta$ is the character group of $T$.  
It is well-known, see e.g. \cite{bo:lag}, that 
the quotient $\Delta/K_{\mathfrak{p}}$ determines a 
unique subgroup $T_{\mathfrak{p}}\subset T$ with the property that 
$R(T_{\mathfrak{p}})=\Z[\Delta/K_{\mathfrak{p}}]$. 
Following \cite{th:con}, we 
call $T_{\mathfrak{p}}$ the {\em support} of $\mathfrak{p}$. 
When $\mathfrak{p}$ is maximal, $K_{\mathfrak{p}}$ 
has finite index and $T_{\mathfrak{p}}$ is a finite group.

\begin{thm}\label{localization.thm}
Let $X$ be a $T$-scheme. Let $\mathfrak{p}\subset R(T)$ be 
a prime ideal and $T_{\mathfrak{p}}$ be its support. 
%Let $X^{T_{\mathfrak{p}}}$ be the subscheme of fixed points and 
%Let $i_{\mathfrak{p}}:X^{T_{\mathfrak{p}}}\to X$ be the 
%inclusion of the fixed point subscheme. 
Then the $R(T)$-algebra map 
$i_{T_\mathfrak{p}}^*:\K(X)\to \K(X^{T_{\mathfrak{p}}})$ 
becomes an isomorphism after localizing at $\mathfrak{p}$: 
$$\xymatrix{i^*_{T_\mathfrak{p}}:\K(X)_{\mathfrak{p}}\ar[r]^{\sim}&\K(X^{T_{\mathfrak{p}}})_{\mathfrak{p}}}.$$  
\end{thm}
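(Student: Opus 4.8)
\noindent\emph{Plan.} The plan is to prove the statement by induction on $\dim X$, the key input being the classical localization theorem in the smooth case; the general case is then reduced to it by Gillet--Kimura cohomological descent. Throughout, I freely localize the exact sequences of Corollaries \ref{ez.seq.cor} and \ref{gillet.square.cor} at $\mathfrak{p}$, which is harmless since localization is exact.

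First I treat the smooth case. If $X$ is smooth then $X^{T_{\mathfrak{p}}}$ is smooth too (a closed subgroup of a torus acting on a smooth scheme has smooth fixed locus in characteristic zero), $i_{T_{\mathfrak{p}}}$ is a regular closed immersion, and by Property (b) of the Introduction the maps $K_T(X)\to\K(X)$ and $K_T(X^{T_{\mathfrak{p}}})\to\K(X^{T_{\mathfrak{p}}})$ are isomorphisms, under which $i^*_{T_{\mathfrak{p}}}$ becomes the Gysin pull-back. Thomason's localization theorem \cite{th:con}, in the form valid for smooth quasi-projective (not necessarily complete) $T$-schemes, gives that the proper push-forward $i_{T_{\mathfrak{p}},*}\colon K_T(X^{T_{\mathfrak{p}}})_{\mathfrak{p}}\to K_T(X)_{\mathfrak{p}}$ is an isomorphism. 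Combining this with the self-intersection formula, which says that the composite $i^*_{T_{\mathfrak{p}}}\circ i_{T_{\mathfrak{p}},*}$ is multiplication by $\lambda_{-1}(N^{\vee})$, where $N$ is the normal bundle of $X^{T_{\mathfrak{p}}}$ in $X$, and with the standard fact that $\lambda_{-1}(N^{\vee})$ becomes a unit after localizing at $\mathfrak{p}$ (since $T_{\mathfrak{p}}$ fixes no nonzero vector of $N$), one concludes that $i^*_{T_{\mathfrak{p}}}$ is an isomorphism after localizing at $\mathfrak{p}$.

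Now the induction. The base case $\dim X\le 0$ is immediate: we may assume $X=\operatorname{Spec}A$ is connected with $A$ artinian local, whose reduced closed point $x_0$ is $T$-fixed; the inclusion $x_0\hookrightarrow X$ and the structure morphism $X\to x_0$ are equivariant envelopes forming a section--retraction pair, so by Theorem \ref{gillet.thm} (which makes both pull-backs injective) they induce mutually inverse isomorphisms $\K(X)\cong\K(x_0)=R(T)$, and likewise for $X^{T_{\mathfrak{p}}}$; since the resulting triangle commutes, $i^*_{T_{\mathfrak{p}}}$ is already an isomorphism. For the inductive step, let $n=\dim X>0$, assume the theorem in dimension $<n$, and choose by Lemma \ref{smooth_envelope.lem} a $T$-equivariant birational envelope $p\colon\tilde X\to X$ with $\tilde X$ smooth quasi-projective, arranged as in the discussion following Theorem \ref{gillet.thm} so that $Z:=X\setminus U$ and $E:=p^{-1}(Z)$ have dimension $<n$, where $U\subset X$ is the locus over which $p$ is an isomorphism. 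Corollary \ref{gillet.square.cor} applied with $H=T_{\mathfrak{p}}$ (a closed subgroup of $T$) produces the commutative ladder
$$
\xymatrix{
0\ar[r]& \K(X) \ar[r]\ar[d]^{i^*_{T_{\mathfrak{p}}}}&
\K(\tilde{X})\oplus \K(Z)\ar[r]\ar[d]^{i^*_{T_{\mathfrak{p}}}}&
\K(E)\ar[d]^{i^*_{T_{\mathfrak{p}}}} \\
0\ar[r]&
\K(X^{T_{\mathfrak{p}}}) \ar[r]&
\K(\tilde{X}^{T_{\mathfrak{p}}})\oplus \K(Z^{T_{\mathfrak{p}}})\ar[r]&
\K(E^{T_{\mathfrak{p}}})
}
$$
with exact rows (Corollary \ref{ez.seq.cor}), which remain exact after localizing at $\mathfrak{p}$. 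By the smooth case the vertical over $\tilde X$ becomes an isomorphism after localizing, and by the inductive hypothesis so do those over $Z$ and $E$; hence the middle and right verticals of the localized ladder are isomorphisms. Since $\K(X)_{\mathfrak{p}}$ and $\K(X^{T_{\mathfrak{p}}})_{\mathfrak{p}}$ are the kernels of the two horizontal maps and the squares commute, the middle isomorphism restricts to an isomorphism $i^*_{T_{\mathfrak{p}}}\colon\K(X)_{\mathfrak{p}}\xrightarrow{\sim}\K(X^{T_{\mathfrak{p}}})_{\mathfrak{p}}$, completing the induction.

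The main obstacle is concentrated in the smooth case: Lemma \ref{smooth_envelope.lem} only yields a smooth \emph{quasi-projective} envelope, so one cannot simply quote the localization theorem for smooth \emph{complete} schemes recalled in the Introduction, but must use Thomason's theorem in its general form together with the invertibility of the $K$-theoretic Euler class of the normal bundle after localization. The reduction of the general case to the smooth one is then formal, cohomological descent (Theorem \ref{gillet.thm}, Corollaries \ref{ez.seq.cor} and \ref{gillet.square.cor}) and its compatibility with passing to fixed subschemes (Lemma \ref{envelope.fix.subgp.lem}) being already in place.
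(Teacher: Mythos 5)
Your proof is correct and follows essentially the same route as the paper: a smooth quasi-projective birational envelope, the descent ladder of Corollary \ref{gillet.square.cor} localized at $\mathfrak{p}$, Noetherian/dimension induction on $Z$ and $E$, and Thomason's concentration theorem to handle the smooth piece. You simply make explicit what the paper leaves implicit, namely the base case and the deduction of the smooth case from the concentration theorem via the self-intersection formula and the invertibility of the equivariant Euler class after localization.
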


\begin{proof}
Choose a $T$-equivariant birational envelope $p:\tilde{X}\to X$, 
with $\tilde{X}$ quasi-projective and smooth. Then $p$ is an 
isomorphism outside some $T$-invariant closed subscheme $Z$.
Let $E=p^{-1}(Z)$. Notice that $p$ can be chosen so 
that $Z$ and $E$ have dimension smaller than that of $X$.
Now, in light of Corollary \ref{gillet.square.cor}, form the 
%fibre square 
%$$
%\xymatrix{
%E & \tilde{X} \\
%Z & X.\\
%}
%$$  
%Following Kimura and Gillet, \cite{ap:opk} assigns to this fibre square an exact sequence of equivariant operational $K$-theory rings: 
%$$
%0\to \K(X) \to \K(\tilde{X})\oplus \K(Z)\to \K(E).
%$$   
%By Lemma \ref{envelope.fix.subgp.lem}, the restriction morphism 
%$p:\tilde{X}^{T_\mathfrak{p}}\to X^{T_\mathfrak{p}}$ is also an envelope, so in a similar fashion we get an associated exact sequence 
%$$
%0\to \K(X^{T_\mathfrak{p}}) \to \K(\tilde{X}^{T_\mathfrak{p}})\oplus \K(Z^{T_\mathfrak{p}})\to \K(E^{T_\mathfrak{p}}).
%$$
%Combining this information we get the 
commutative diagram of exact sequences  
$$
\xymatrix{
0\ar[r]& \K(X) \ar[r]\ar[d]^{i^*_{T_\mathfrak{p}}}& \K(\tilde{X})\oplus \K(Z)\ar[r]\ar[d]\ar[d]^{i^*_{T_\mathfrak{p}}}& \K(E)\ar[d]\ar[d]^{i^*_{T_\mathfrak{p}}} \\
0\ar[r]& \K(X^{T_\mathfrak{p}}) \ar[r]& \K(\tilde{X}^{T_\mathfrak{p}})\oplus \K(Z^{T_\mathfrak{p}})\ar[r]& \K(E^{T_\mathfrak{p}}).
}
$$ 
It follows from Noetherian induction and 
Thomason's concentration theorem 
\cite[Th\'eor\`eme 2.1]{th:con} that 
 the last two vertical maps become  
isomorphisms after localizing at $\mathfrak{p}$; hence 
so does the first one. 
\end{proof}

\begin{cor}
Let $X$ be a complete $T$-scheme. Notation being as above, the 
$R(T)$-algebra map $i^*_{T_\mathfrak{p}}:\K(X)\to \K(X^{T_{\mathfrak{p}}})$ is injective and it becomes surjective after localizing at $\mathfrak{p}$. %\hfill $\square$
\end{cor}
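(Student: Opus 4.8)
The plan is to deduce this corollary as a straightforward consequence of Theorem \ref{localization.thm} together with Proposition \ref{inj.fix.set.lem}. Since $X$ is complete, $X^{T_\mathfrak{p}}$ is a $T$-invariant closed subscheme containing $X^T$ (because $T_\mathfrak{p}\subset T$ forces $X^T\subset X^{T_\mathfrak{p}}$), so Corollary \ref{injectivity.cor} immediately gives injectivity of $i^*_{T_\mathfrak{p}}:\K(X)\to \K(X^{T_\mathfrak{p}})$ without any localization. This is the first half of the claim, and it uses completeness in an essential way: without it one only has injectivity after localizing at $\mathfrak{p}$.

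For surjectivity after localizing at $\mathfrak{p}$, I would simply invoke Theorem \ref{localization.thm}, which asserts that $i^*_{T_\mathfrak{p}}:\K(X)_\mathfrak{p}\to \K(X^{T_\mathfrak{p}})_\mathfrak{p}$ is an \emph{isomorphism} of $R(T)_\mathfrak{p}$-algebras; in particular it is surjective. Since localization at $\mathfrak{p}$ is exact, the map $\K(X)\to \K(X^{T_\mathfrak{p}})$ becomes surjective after tensoring with $R(T)_\mathfrak{p}$. Thus both assertions follow formally, with completeness of $X$ invoked only to upgrade the ``injective after localization'' conclusion of Theorem \ref{localization.thm} to ``injective on the nose.''

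There is essentially no obstacle here — the corollary is a packaging statement. The only point requiring a line of justification is the observation that $X^T\subseteq X^{T_\mathfrak{p}}$, so that Corollary \ref{injectivity.cor} applies with $Y=X^{T_\mathfrak{p}}$; this is immediate since any point fixed by $T$ is a fortiori fixed by the subgroup $T_\mathfrak{p}$. So the proof is: ``By Theorem \ref{localization.thm}, $i^*_{T_\mathfrak{p}}$ becomes an isomorphism after localizing at $\mathfrak{p}$, hence in particular surjective. For injectivity, note that $X^T\subseteq X^{T_\mathfrak{p}}$ and apply Corollary \ref{injectivity.cor} with $Y=X^{T_\mathfrak{p}}$, using that $X$ is complete.'' I would not expect this step to present any real difficulty.
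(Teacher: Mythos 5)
Your proof is correct and follows essentially the same route as the paper: the paper's one-line proof cites Proposition \ref{inj.fix.set.lem} (whose consequence, Corollary \ref{injectivity.cor} with $H=T_\mathfrak{p}$, is exactly your injectivity step) together with Theorem \ref{localization.thm} for surjectivity after localizing at $\mathfrak{p}$. Your extra remark that $X^T\subseteq X^{T_\mathfrak{p}}$ is the same observation the paper makes at the end of Corollary \ref{injectivity.cor}.
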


\begin{proof}
Use Proposition \ref{inj.fix.set.lem}
and Theorem \ref{localization.thm}. 
\end{proof}

%\smallskip

%\section{CS property}

\begin{dfn}
Let $X$ be a complete $T$-scheme. We say that $X$ has the 
{\bf Chang-Skjelbred property} (or {\bf CS property}, for short) if the 
image of $$i^*_T:\K(X)\to \K(X^T)$$ is exactly the intersection 
of the images of 
$$
i^*_{T,H}:\K(X^H)\to \K(X^T),
$$
where $H$ runs over all subtori of codimension one in $T$.
\end{dfn}

By \cite[Theorem 2]{vv:hkth}, every nonsingular complete $T$-scheme
has the CS property. Remarkably, it holds over $\Z$.  
%(as stated in 
%\cite{vv:hkth}, when it comes to torsion, equivariant $K$-theory tends to be 
%better behaved than cohomology or intersection theory). 
We extend this result to include all, possibly singular, 
complete schemes with an action of $T$. %with torus action.   

%Regardless of whether the $R(T)$-module $\K(X)$ is free. 
%Such is the central theme of the subsequent sections. 

\begin{thm}\label{cs.thm}
Let $X$ be a complete $T$-scheme. 
Then $X$ has the CS property.  
%, that is, 
%the image of the pull-back by restriction of fixed points 
%$$i^*_T:\K(X)\to \K(X^T)$$
%is equal to the intersection of the images of 
%$$i^*_H:\K(X^H)\to \K(X^T),$$
%where $H$ runs over all 
%codimension-one subtori $H\subset T$. 
\end{thm}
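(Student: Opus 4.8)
The plan is to reduce the singular case to the smooth case via a smooth equivariant birational envelope and Noetherian induction on $\dim X$, exactly in the spirit of the proof of Theorem \ref{localization.thm}. Choose $p\colon\tilde X\to X$ a $T$-equivariant birational envelope with $\tilde X$ smooth and projective (Lemma \ref{smooth_envelope.lem}), an isomorphism outside a $T$-invariant closed subscheme $Z$ of smaller dimension, and set $E=p^{-1}(Z)$, also of smaller dimension. By the inductive hypothesis, $Z$ and $E$ have the CS property; by \cite[Theorem 2]{vv:hkth} together with Property (b), so does $\tilde X$. I then want to assemble these four CS descriptions into a CS description of $X$ using the commutative diagram of exact sequences from Corollary \ref{gillet.square.cor} (Mayer--Vietoris form), and its analogue with $T$ replaced by an arbitrary codimension-one subtorus $H$ and by the fixed subscheme $X^T$.

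First I would record the elementary fact that CS is really a statement about a single complete $T$-scheme together with the exact localization sequence that computes $\K$ of its fixed loci: concretely, for each codimension-one subtorus $H\subset T$ one has (Corollary \ref{gillet.square.cor}, applied both to $H$ and to the trivial subgroup, i.e.\ to $T$ itself) a commuting prism of Mayer--Vietoris sequences relating $\K(X),\K(\tilde X)\oplus\K(Z),\K(E)$ to their $H$-fixed and $T$-fixed versions. Next, I would take a class $c\in\K(X^T)$ lying in $\bigcap_H \mathrm{Im}\,[i^*_{T,H}]$ and, restricting along the (injective, by Corollary \ref{gillet.square.cor} and Proposition \ref{inj.fix.set.lem}) maps to $\K(\tilde X^T)$, $\K(Z^T)$, $\K(E^T)$, observe that its images there again satisfy the corresponding CS intersection conditions — this uses that $(\tilde X\times_X\tilde X)^H=\tilde X^H\times_{X^H}\tilde X^H$ and the naturality of $i^*_{T,H}$ through the envelope, i.e.\ Lemma \ref{envelope.fix.subgp.lem}. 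By the CS property for $\tilde X$, $Z$ and $E$, these images come from classes $\tilde c\in\K(\tilde X)$, $c_Z\in\K(Z)$, $c_E\in\K(E)$. A diagram chase in the Mayer--Vietoris square then shows $(\tilde c,c_Z)$ agrees in $\K(E)$ and hence lifts to a unique $c_X\in\K(X)$ whose image in $\K(X^T)$ is $c$; injectivity of the vertical maps (Proposition \ref{inj.fix.set.lem}) guarantees uniqueness and compatibility. The base case of the induction is $\dim X=0$, where $X=X^T$ is a finite set of points and CS is vacuous.

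The step I expect to be the main obstacle is the compatibility bookkeeping in the diagram chase: one must check that the class $\tilde c$ produced over $\tilde X$ and the class $c_Z$ produced over $Z$ not only individually restrict correctly to $X^T$-fixed data, but that their images in $\K(E)$ actually coincide, so that exactness of the top Mayer--Vietoris row yields an honest lift to $\K(X)$ — rather than merely a lift to each of $\tilde X$ and $Z$ separately. This requires knowing that $\tilde c|_E$ and $c_Z|_E$ have the same image in $\K(E^T)$ (which follows from both equalling $c$ pulled back to $E^T$, using injectivity of $i_T^*$ for $E$) and then invoking injectivity of $i_T^*\colon\K(E)\to\K(E^T)$ to conclude they agree already in $\K(E)$. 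Once that is in place, the remaining verifications — that the resulting $c_X$ has the prescribed image in $\K(X^T)$, and that this argument is uniform in $H$ — are formal consequences of the commutativity in Corollary \ref{gillet.square.cor}.
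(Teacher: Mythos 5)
Your proposal is correct, but it follows a genuinely different route from the paper's. You work with the birational form of descent (the sequence $0 \to \K(X) \to \K(\tilde{X})\oplus\K(Z) \to \K(E)$ of Corollary \ref{ez.seq.cor}) and run a Noetherian induction on $\dim X$, invoking the inductive CS property for $Z$ to produce $c_Z$ and then gluing $\tilde{c}$ and $c_Z$ over $E$ via injectivity of $i_T^*\colon\K(E)\to\K(E^T)$. The paper instead uses the full fiber-product descent sequence $0\to\K(X)\to\K(\tilde{X})\to\K(\tilde{X}\times_X\tilde{X})$ for a smooth projective envelope and needs \emph{no induction at all}: the single key observation is that $u\in\K(X^T)$ lies in ${\rm Im}(i_T^*)$ if and only if $\pi_T^*(u)$ lies in ${\rm Im}(\widetilde{i_T^*})$, which is a one-line diagram chase once one knows that $i_T^*$ is injective for the complete $T$-scheme $\tilde{X}\times_X\tilde{X}$ (Proposition \ref{inj.fix.set.lem}); the transfer of the intersection condition from $X$ to $\tilde{X}$ is then identical to your second diagram. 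In effect, your induction on $Z$ is doing the work that the paper gets for free from applying Proposition \ref{inj.fix.set.lem} to the fiber product, so the paper's argument is shorter and avoids having to track that $\dim Z,\dim E<\dim X$. Two small points in your write-up: the individual restrictions $\K(X^T)\to\K(Z^T)$ and $\K(X^T)\to\K(E^T)$ are not injective in general (only the combined map into $\K(\tilde{X}^T)\oplus\K(Z^T)$ is, and that is all you actually use at the final step), and the class $c_E$ you produce from the CS property of $E$ is never needed --- injectivity of $i_T^*$ for $E$ suffices, so the inductive hypothesis is really only invoked for $Z$.
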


\begin{proof}%[Proof of Theorem \ref{cs.thm}]
Let $\pi:\tilde{X}\to X$ be a $T$-equivariant envelope 
with $\tilde{X}$ projective and smooth (Lemma \ref{smooth_envelope.lem}). 
%The pullback square 
%$$
%\xymatrix{
%X'          \ar[r]^{p_1}\ar[d]_{p_2}& \tilde{X}\ar[d]^{\pi}\\
%\tilde{X}   \ar[r]^{\pi}            & X, \\
%}
%$$
%where $X'=\tilde{X}\times_X\tilde{X}$ and $p_i$ is the $i$-th projection ($i=1,2$),  
%gives rise to the exact sequence \cite[Proposition 5.3]{ap:opk} 
%$$
%\xymatrix{0 \ar[r]& \K(X) \ar[r]^{\pi^*}& \K(\tilde{X}) \ar[rr]^{p_1^*-p_2^*} & & \K(X')}.
%$$
%Moreover, a similar sequence can be obtained by considering the 
%equivariant envelope 
%$\pi^T:\tilde{X}^T\to X^T$ (Lemma \ref{envelope.fix.subgp.lem}). 
%Combining this information,  
Because of Corollary \ref{gillet.square.cor}   
we get the commutative diagram 
$$
\xymatrix{
0 \ar[r]& \K(X) \ar[r]^{\pi^*}\ar[d]_{i_T^*}& \K(\tilde{X}) \ar[rr]^{\pi_1^*-\pi_2^*} \ar[d]_{\widetilde{i_T^*}}& & \K(X')\ar[d]_{{i_T^*}'}\\
0 \ar[r]& \K(X^T) \ar[r]^{{\pi_T}^*}& \K({\tilde{X}}^T)\ar[rr]^{\pi_{T,1}^*-\pi_{T,2}^*} & & \K({X'}^T).\\
}
$$
A simple diagram chasing shows that $u\in \K(X^T)$ is in the image of $i^*_T$ 
if and only if 
${\pi_T}^*(u)$ is in the image of $\widetilde{i^*_T}$. Indeed, this follows from the fact 
that all vertical maps in the diagram are injective (Proposition \ref{inj.fix.set.lem}).  

%\smallskip

On the other hand we have the commutative diagram  
$$
\xymatrix{
 & \K(X)   \ar[rr]^{\pi^*}\ar[d]^{i_T^*}\ar[ddl]_{i^*_H}& 
&\K(\tilde{X})\ar[d]^{\widetilde{i_T^*}}\ar[ddl]_{\widetilde{i^*_H}}\\
 & \K(X^T) \ar[rr]^{{\pi_T}^*}& &\K(\tilde{X}^T)\\
\K(X^H) \ar[rr]^{{\pi_H}^*}\ar[ur]_{i^*_{T,H}}& & 
\K(\tilde{X}^H)\ar[ur]_{\widetilde{i^*_{T,H}}} & \\
}
$$
obtained by combining 
and comparing  
the sequences that Corollary \ref{gillet.square.cor} assigns  
to the envelopes $\pi:\tilde{X}\to X$, $\pi_H:\tilde{X}^H\to X^H$ 
and $p_T:\tilde{X}^T\to X^T$. 
From the diagram it follows that if $u\in \K(X^T)$ is in 
the image of $i^*_{T,H}$, then ${\pi_T}^*(u)$ is in the image of $\widetilde{i^*_{T,H}}$. 
Hence, if $u$ is in the intersection of the images of all $i^*_{T,H}$, then 
${\pi_T}^*(u)$ is in the intersection of the images of all 
$\widetilde{i^*_{T,H}}$, where $H$ runs over all codimension-one subtori of $T$. 
Since $\tilde{X}$ satisfies the CS condition, then ${\pi_T}^*(u)$ is in the image of 
$\widetilde{i^*_T}$. Finally, 
from the observation made at the end of the previous paragraph, 
we conclude that $u$ is in the image of $i^*_T$. 
\end{proof}

\section{GKM theory} 

Vistoli and Vezzosi 
established a version of GKM theory   
applicable to nonsingular complete $T$-schemes 
\cite[Theorem 2]{vv:hkth}. 
Based on Theorem \ref{cs.thm},     
we establish here a version of 
GKM-theory valid for the equivariant operational 
$K$-theory of {\em singular} complete $T$-schemes 
(Theorem \ref{gkm.thm}). 
As a consequence, we extend  
\cite[Theorem 1.6]{ap:opk} to 
%
%                                                 
%
%This theory has been extended 
%to equivariant Chow groups in the work of Brion \cite{bri:eqchow}, 
% 
%
%
%
%\medskip
%
%Here we propose yet another generalization of GKM theory 
%to the study of possibly singular equivariant 
%Kronecker duality varieties
%and their equivariant operational Chow rings.  
%We extend this further,  
%to include 
the larger class of $T$-skeletal varieties, a family 
of objects that includes all    
%including 
equivariant 
projective embeddings  
of reductive groups (Theorem \ref{gkm.emb.thm}). 
%Our motto is that equivariant Kronecker duality varieties are the 
%analogue of the equivariantly formal spaces of Goresky, Kottwitz, MacPherson in 
%the setting of operational Chow cohomology and operational 
%$K$-theory. 
%\smallskip
%  
%With this purpose in mind, 
We start by recalling a few definitions from \cite{gkm:eqc} and \cite{go:cells}.

\begin{dfn} \label{genericaction.def} % or multiplicity free %%%%%%%%%%%%%%%%%%%%%%%%%%%%%%%%%%%%%%%%%%%%%%%%%%%%%
Let $X$ be a complete $T$-variety. 
Let $\mu:T\times X\to X$ be the action map.
We say that $\mu$ is a {\boldmath $T$\bf-skeletal action} if 
%
%We say that $\sigma$ is {\bf generic}, or simply that the $T$-action is {\bf generic}, 
%if
\begin{enumerate}
 \item $X^T$ is finite, and
 \item The number of one-dimensional orbits of $T$ on $X$ is finite.
\end{enumerate}
In this context, $X$ is called a {\boldmath$T$\bf-skeletal variety}. 
The associated graph of fixed points and invariant curves is called 
the {\bf GKM graph} of $X$. We shall 
denote this graph by $\Gamma(X)$.  
\end{dfn}

\begin{ex}
Smooth $T$-skeletal varieties 
include regular compactifications of reductive groups 
(\cite{bif:reg}, \cite{lp:equiv}) 
and, more generally, %wonderful symmetric varieties of minimal rank 
regular compactifications of symmetric varieties of minimal rank. 
%Also spherical (not necessarily symmetric) 
%varieties of minimal rank. 
The Chow rings of these varieties are 
described in \cite{bj:chern} by means of GKM theory.   
%\end{ex}
In constrast,  
%\begin{ex}
Schubert varieties and projective group embeddings
of reductive groups are examples of {\em singular} 
$T$-skeletal varieties.  
The former have a paving by affine spaces
and their equivariant cohomology is well-known \cite{c:schu}. 
The latter are spherical varieties and, when rationally smooth, 
their equivariant cohomology has been described 
by the author in \cite{go:equiv}.  
Our version of GKM theory (Theorem \ref{gkm.thm}) 
will generalize these topological  
descriptions to the corresponding 
equivariant operational $K$-theory rings 
(Example 5.5 and Section 6).   
%The purpose of 
%Theorem \ref{gkm.thm} below is 
%to 

%, notably 
%with no conditions on the nature of the singularities. 
%We illustrate this in the next section.  
\end{ex}

\smallskip

Let $X$ be a complete $T$-variety and let 
$C$ be a $T$-invariant irreducible curve of $X$, 
which is not fixed pointwise by $T$. 
Let $\pi:\tilde{C}\to C$ be the ($T$-equivariant) normalization.  
Then $\tilde{C}$ is isomorphic to $\P^1$. 
Denote by $0,\infty$ the two fixed points 
of $T$ in $\tilde{C}$, and denote by $x_0,x_\infty$ 
their corresponding images 
via $\pi$. Then 
$\tilde{C}\setminus \{0,\infty\}=C\setminus \{x_0,x_\infty\}$ 
identifies to $k^*$, where $T$ acts on 
$\tilde{C}\setminus \{0,\infty\}$ 
via a unique character $\chi$ 
(when interchanging $0$ and $\infty$, 
one replaces $\chi$ by $-\chi$). 
Clearly, 
$T$ has either one or two fixed points in $C$.  

%\smallskip

Notice that, in principle, Definition \ref{genericaction.def} 
allows for $T$-invariant irreducible curves with exactly one 
fixed point (i.e. the GKM graph 
$\Gamma(X)$ may have simple loops). 
We shall see that the functor $\K(-)$ 
``contracts'' such loops %of $\Gamma(X)$ 
to a point. 

\begin{prop}\label{1.2.fix.point.curves.prop}
Let $X$ be a complete $T$-variety and let $C$ 
be a $T$-invariant irreducible curve of $X$ which is not 
fixed pointwise by $T$. Then the image of the injective map 
$i^*_T:\K(C)\to \K(C^T)$ is described as follows: 
\begin{enumerate}[(i)]
 \item If $C$ has only one 
fixed point, say $x$, then $i^*_T:\K(C)\to \K(x)$ is 
an isomorphism; that is, $\K(C)\simeq R(T)$. 
\smallskip

\item If $C$ has two fixed points, then  
$$\K(C)\simeq 
\{(f_0,f_\infty) \in R(T)\oplus R(T)\,|\, 
f_0\cong f_\infty \mod 1-e^{-\chi}\},$$
where $T$ acts on $C$ via the character $\chi$. 
\end{enumerate}
\end{prop}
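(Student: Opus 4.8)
The plan is to reduce the computation of $\K(C)$ for a possibly singular $T$-invariant curve to the smooth case $\tilde C \cong \P^1$ via cohomological descent, using the normalization $\pi:\tilde{C}\to C$ as an equivariant envelope. First I would observe that $\pi$ is a $T$-equivariant birational envelope: it is proper, $T$-equivariant, and an isomorphism over a dense open $T$-stable subset $U = C\setminus\{x_0,x_\infty\}$ of $C$ (in case (ii); over $C\setminus\{x\}$ in case (i)). The complement $Z = C\setminus U$ is a finite set of $T$-fixed points, and $E = \pi^{-1}(Z)$ is likewise finite. Then Theorem \ref{gillet.thm} applies: a class $\tilde c\in \K(\tilde C)$ lies in the image of $\pi^*$ iff $\tilde c|_{E_i} = \pi_i^*(c_i)$ for some $c_i\in \K(Z_i)$, for each component $Z_i$ of $Z$. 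Since each $Z_i$ is a point, $\K(Z_i)=R(T)$ and this condition just says the values of $\tilde c$ at the preimages of a given point $x_k$ all agree. The key input in the smooth case is that $\tilde C = \P^1$ with linear $T$-action has $\K(\tilde C)\simeq PE_T(\P^1) = \{(f_0,f_\infty)\in R(T)\oplus R(T) \mid f_0\equiv f_\infty \bmod 1-e^{-\chi}\}$, which follows from Property (b) of the Introduction together with the GKM theorem for smooth $T$-skeletal varieties (\cite[Theorem 1.3]{uma:kth}, or directly \cite[Theorem 1.6]{ap:opk} since $\P^1$ is toric).

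For part (i), where $C$ has a single fixed point $x$: here $\pi^{-1}(x) = \{0,\infty\}$, both points of $\P^1$, so $E = Z = \{x\}$ with $\pi^{-1}(U)\cong k^*$. The descent criterion of Theorem \ref{gillet.thm} forces $\tilde c = (f_0,f_\infty)$ to satisfy $f_0 = f_\infty$ (since both $0$ and $\infty$ lie over the single point $x$, and the single $c$-value at $x$ must pull back to both components of $\tilde c|_E$). Combined with the congruence defining $PE_T(\P^1)$, this gives $\K(C)\simeq\{f\in R(T)\oplus R(T)\mid f_0=f_\infty\}\cong R(T)$, and one checks that under $i_T^*:\K(C)\to \K(x)=R(T)$ this identification is exactly the restriction map, so $i_T^*$ is an isomorphism. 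This is what is meant by the functor ``contracting the loop.'' For part (ii), where $C$ has two fixed points $x_0 \neq x_\infty$: the map $\pi$ sends $0\mapsto x_0$ and $\infty\mapsto x_\infty$ bijectively on fixed points, so $\pi^{-1}(x_0)=\{0\}$ and $\pi^{-1}(x_\infty)=\{\infty\}$, each a single reduced point. The descent condition is then vacuous (each fiber over a point of $Z$ is a single point, so $\tilde c|_{E_i}$ automatically equals $\pi_i^*$ of its own single value), whence $\pi^*:\K(C)\to\K(\tilde C)$ is an isomorphism, and transporting $PE_T(\P^1)$ through it, together with the identification $C^T = \{x_0,x_\infty\}\leftrightarrow\{0,\infty\}$, yields exactly the stated description of $\K(C)$ as $\{(f_0,f_\infty)\mid f_0\equiv f_\infty\bmod 1-e^{-\chi}\}$. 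Injectivity of $i_T^*$ in both cases is already guaranteed by Proposition \ref{inj.fix.set.lem}, since $C$ is complete.

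The one point requiring genuine care — the main obstacle — is verifying in part (i) that the fiber $\pi^{-1}(x)$ is the reduced scheme $\{0,\infty\}$ and applying the inductive image description of Theorem \ref{gillet.thm} correctly: one must confirm that the restriction $\tilde c|_E\in\K(E)=\K(\{0,\infty\})=R(T)\oplus R(T)$ lies in the image of $\pi^*:\K(\{x\})=R(T)\to R(T)\oplus R(T)$, i.e. the diagonal, precisely when $f_0=f_\infty$. This is where the ``contraction'' genuinely happens, and it is the step where the singularity of $C$ at $x$ (or more precisely the non-injectivity of $\pi$ on that fiber) enters. Everything else is bookkeeping: identifying $\K(\P^1)$ with piecewise exponential functions (cited), matching up fixed-point sets under $\pi$, and checking that the isomorphisms produced are compatible with the maps $i_T^*$, which is routine from functoriality of $\K(-)$ and the commutativity already established in Corollary \ref{gillet.square.cor}.
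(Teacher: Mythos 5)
Your proposal is correct and follows essentially the same route as the paper: pass to the normalization $\pi:\P^1\to C$, identify $\K(\P^1)=K_T(\P^1)$ with the piecewise exponential description via the smooth GKM theorem, and use the descent criterion of Theorem~\ref{gillet.thm} to see that the image of $\pi^*$ is cut out by the diagonal condition $f_0=f_\infty$ over the single fixed point in case (i) and is everything in case (ii). The point you flag as delicate (the fiber over the loop point) is exactly the step the paper isolates, resolved the same way by observing that $\pi^*:\K(x)\to\K(\{0,\infty\})$ is the diagonal inclusion.
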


\begin{proof} 
Let $\pi:\P^1\to C$ be the normalization map. 
By \cite[Theorem 2]{vv:hkth} (see also 
\cite[Theorem 1.3]{uma:kth})  
$$K_T(\P^1)=\{(f_0,f_\infty)\in R(T)\oplus R(T)\,|\, 
f_0\cong f_\infty \mod 1-e^{-\chi}\},$$ 
where $\chi$ is the character of the $T$-action on $C$. 
Moreover, given that $\P^1$ is smooth, we get $\K(\P^1)=K_T(\P^1)$. 
In view of this, and Gillet-Kimura criterion 
(Theorem \ref{gillet.thm}), it suffices
to find 
the image of the injective map 
$\pi^*:\K(C)\to \K(\P^1)$ explicitly.  
First, assume that $C$ has only one fixed point, 
say $x=\pi(0)=\pi(\infty)$. Then an element $f\in \K(\P^1)$ 
is in the image of $\pi^*$ if and only if 
the restriction $(f_0,f_\infty)\in \K(\{0,\infty\})$ 
is in the image of the induced map 
$\pi^*:\K(x)\to \K(\{0,\infty\})$. But the latter 
morphism  
is simply the diagonal inclusion, so we get that $f\in \K(\P^1)$ is in the image of 
$\pi^*$ if and only if $f_0=f_{\infty}$.  
%(here we are using the 
%fact that 
%$\K(\P^1)=\{(f_0,f_\infty)\in R(T)\oplus R(T)\,|\, 
%f_0\cong f_\infty \mod 1-e^{-\chi}\}$, 
%where $\chi$ is the character of the $T$-action on $C$. 
Therefore, $\K(C)=R(T)$ and $i^*_T:\K(C)\to \K(x)$ 
is an isomorphism. Finally, if $\pi(0)\neq \pi(\infty)$, 
a similar analysis yields 
assertion (ii).  
%follows easily from 
%a similar analysis. 
\end{proof}

Let $X$ be a $T$-skeletal variety. 
%For simplicity of the presentation, 
%we will assume that all 
%$T$-invariant irreducible curves have 
%exactly two fixed points (e.g. $X$ has an 
%equivariant embedding into a projective space
%with a linear action of $T$). 
%In light of Proposition \ref{1.2.fix.point.curves.prop}, 
%we do not lose generality by performing this 
%reduction, since $\K(-)$ contracts the curves 
%where $T$ acts with exactly one fixed point. 
Now, as done in the Introduction, it is 
possible to define a ring $PE_T^*(X)$ 
of {\bf piecewise exponential functions}. 
We recall the construction here, taking into account  
Proposition \ref{1.2.fix.point.curves.prop}.
%taking into account the case 
%when $T$-invariant curves have only one fixed point. %Indeed, 
Let $K_T(X^T)=\oplus_{x\in X^T}R_x$, 
where $R_x$ is a copy of the representation ring $R(T)$. 
We then define $PE_T(X)$ as the subalgebra of $K^0_T(X^T)$ 
given by 
$$
PE_T(X)=\{(f_1,\ldots, f_m)\in \oplus_{x\in X^T}R_x\,|\, f_i\equiv f_j \mod 1-e^{-\chi_{i,j}}\}
$$ 
where $x_i$ and $x_j$ are the two (perhaps equal)
fixed points 
in the closure of 
the one-dimensional 
$T$-orbit $C_{i,j}$, 
and $\chi_{i,j}$ 
is the character of $T$ associated with $C_{i,j}$. 
This character is uniquely determined up to sign 
(permuting the two fixed points changes $\chi_{i,j}$ to its opposite). 
Invariant curves with only one fixed point 
do not impose any relation (this  
is compatible with Proposition \ref{1.2.fix.point.curves.prop}).

%%%%%%%%%%%%%%%%%%%

\begin{thm}\label{gkm.thm}
Let $X$ be a complete $T$-skeletal variety. 
Then the pullback 
$i^*_T:\K(X)\to \K(X^T)$ 
induces an isomorphism between 
$\K(X)$ and $PE_T(X)$. 
\end{thm}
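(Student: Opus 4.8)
The plan is to reduce the statement to the Chang–Skjelbred property (Theorem~\ref{cs.thm}) together with the explicit computation for invariant curves (Proposition~\ref{1.2.fix.point.curves.prop}), exactly as in the smooth case treated in \cite{vv:hkth}. By Proposition~\ref{inj.fix.set.lem} the pullback $i^*_T:\K(X)\to\K(X^T)$ is injective, so it suffices to identify its image with $PE_T(X)$. Since $X$ is a complete $T$-scheme, Theorem~\ref{cs.thm} tells us that this image equals $\bigcap_{H}\mathrm{Im}[i^*_{T,H}:\K(X^H)\to\K(X^T)]$, the intersection being over all codimension-one subtori $H\subset T$. The task is therefore to show that this intersection coincides with the subalgebra $PE_T(X)\subseteq\K(X^T)=\oplus_{x\in X^T}R_x$ cut out by the curve congruences.

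First I would analyze $X^H$ for a fixed codimension-one subtorus $H$. The fixed locus $X^H$ is a complete $T$-variety on which $T/H\cong\G$ acts, and its one-dimensional $T$-orbits are precisely the $T$-invariant irreducible curves $C$ of $X$ whose associated character $\chi_C$ restricts nontrivially on $T$ (equivalently, $\chi_C$ does not vanish on the Lie algebra of $H$, i.e. $H$ is the connected component of $\ker\chi_C$); the remaining invariant curves lie inside $X^H$ componentwise and contribute nothing. For a connected component $Z$ of $X^H$ containing at least one such curve, $T/H$ acts on $Z$ with finitely many fixed points and finitely many one-dimensional orbits, and each such orbit closure is a $(T/H)$-skeletal curve handled by Proposition~\ref{1.2.fix.point.curves.prop}. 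Using Corollary~\ref{irr.seq.cor} (or directly the curve-by-curve description) one sees that $\mathrm{Im}[i^*_{T,H}:\K(X^H)\to\K(X^T)]$ consists exactly of those tuples $(f_x)_{x\in X^T}$ satisfying $f_{x_i}\equiv f_{x_j}\bmod (1-e^{-\chi_{i,j}})$ for every invariant curve $C_{i,j}$ with $\ker\chi_{i,j}$ having identity component $H$ — the congruence being sensible precisely because $R(T)/(1-e^{-\chi_{i,j}})$ only sees the $H$-relevant part. Intersecting over all $H$ then produces all the curve congruences simultaneously, which is the definition of $PE_T(X)$; curves with a single fixed point impose no condition, matching Proposition~\ref{1.2.fix.point.curves.prop}(i).

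Concretely, the key steps in order are: (1) invoke Proposition~\ref{inj.fix.set.lem} for injectivity and Theorem~\ref{cs.thm} to replace the image by the Chang–Skjelbred intersection; (2) for each codimension-one $H$, identify the one-dimensional $T$-orbits of $X^H$ and, via Proposition~\ref{1.2.fix.point.curves.prop} applied to each orbit closure together with the Mayer–Vietoris sequence of Corollary~\ref{irr.seq.cor}, compute $\mathrm{Im}\,i^*_{T,H}$ as the set of tuples satisfying the congruences coming from curves ``adapted to $H$''; (3) take the intersection over all $H$ and match it term-by-term with the defining relations of $PE_T(X)$. I expect the main obstacle to be step~(2): one must argue carefully that restricting attention to a single component $Z$ of $X^H$ and to the curves inside it really does recover the full image of $i^*_{T,H}$, and that the congruence modulo $1-e^{-\chi_{i,j}}$ for the $T$-action is equivalent to the analogous congruence for the induced $T/H$-action — this is where the structure of $R(T)$ versus $R(T/H)$ and the precise definition of the support $T_{\mathfrak p}$ enter. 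Once the local picture over each $X^H$ is pinned down, assembling the global isomorphism $\K(X)\simeq PE_T(X)$ is formal.
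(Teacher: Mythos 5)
Your proposal is correct and follows essentially the same route as the paper: reduce via Proposition~\ref{inj.fix.set.lem} and Theorem~\ref{cs.thm} to computing each $\mathrm{Im}\,i^*_{T,H}$, observe that the irreducible components of $X^H$ are fixed points and invariant curves, and combine Proposition~\ref{1.2.fix.point.curves.prop} with Corollary~\ref{irr.seq.cor} (in the form of Remark~\ref{normalization.irr.comp.rem}) to identify that image with the congruences coming from the curves adapted to $H$. One small slip in your step (2): the invariant curves contained in $X^H$ are those whose character $\chi_C$ is \emph{trivial} on $H$, i.e.\ $H=(\ker\chi_C)^{\circ}$, not those for which $\chi_C$ is nonvanishing on the Lie algebra of $H$ --- your parenthetical contradicts the correct conclusion you then state.
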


%%%%%%%%%%

%It follows that for $T$-skeletal varieties, $\K(X)\simeq PE_T(X)$
%is finitely generated, torsion-free $R(T)$-module 
%(though it might not be free). 

%Another interesting class of varieties for which the $R(T)$-module $\K(T)$ is finitely generated and torsion-free is the class of  
%$T$-linear varieties (for they satisfy Equivariant Kronecker duality \cite[Theorem 6.1]{ap:opk}. In the case of spherical varieties, the generators are given by the classes of 
%the structure sheaf of $B$-invariant subvarieties. Recall that 
%$K^T(X)\simeq K^B(X)$ by \cite{mer:comp}.   

\begin{proof}
Observe that a codimension one subtorus of $T$ is the kernel 
of a primitive (i.e. indivisible) character of $T$. Such character 
is uniquely defined up to sign.

Let $\pi$ be a primitive character of $T$. 
%First, let us assume that this is a disjoint union, and 
%let $C$ be one of these curves.  
Let $X^{\ker {\pi}}=\bigcup_j X_j$ be the 
decomposition into irreducible components. 
Notice that each $X_j$ is either a fixed point, 
%$x_1,\ldots, x_r$ 
or a    
$T$-invariant irreducible curve.    
%$C_1,\ldots, C_p$ 
%with one or two fixed points.  
Now, with the notation of Corollary \ref{irr.seq.cor}, we have 
the commutative diagram 
$$
\xymatrix{
0\ar[r] & \K(X^{\ker {\pi}})\ar[r] \ar[d]_{i^*_{T,\ker {\pi}}}&
\bigoplus_i \K(X_j) \ar[r]\ar[d]& 
\bigoplus_{i,j}\K(X_{i,j})\ar[d]^{\rm id}\\
0\ar[r] & \K(X^T)\ar[r] &
\bigoplus_i \K(X_j^T) \ar[r]& \bigoplus_{i,j}\K(X_{i,j}),
}
$$
where each $X_{i,j}$ is just a fixed point, and $T$ acts 
on those $X_j$'s that are curves via a character $\chi_j$, 
a multiple of $\pi$. The image of the 
middle vertical map is completely characterized by 
Proposition \ref{1.2.fix.point.curves.prop}, and so 
is the image of $i^*_{T,\ker {\pi}}$, as it follows from  
Remark \ref{normalization.irr.comp.rem}. In short,  
${\rm Im}(i^*_{T,\ker {\pi}})\simeq PE_T(X^{\ker {\pi}})$. 
Now apply Theorem \ref{cs.thm} to conclude the proof. 
\end{proof}

%\begin{rem}
%By considering relations of the form $f-g\equiv 0 \mod \chi$ 
%above, and replacing $R(T)$ by ${\rm Sym}[M]$ we get the 
%ring  $PP_T^*(X)$
%of piecewise polynomial functions on the GKM graph of $X$. 
%Our techniques readily imply that ${\rm op}A^*_T(X)\simeq PP_T^*(X)$. 
%\end{rem}

%\medskip

%\begin{rem} 
Let $X$ be a $T$-skeletal variety. 
Notice that $\Gamma(X)$  
%the GKM graph of 
%a $T$-skeletal variety $X$,  
is a singular projective $T$-variety with the 
same equivariant  
operational $K$-theory as that of $X$. In symbols, 
$\K(\Gamma(X))=\K(X)$. This is 
simply  a rephrasing of 
Theorems \ref{cs.thm} and \ref{gkm.thm}.  
%This observation is important 
%in constructing simple examples 
%where $\K(X)$ is a projective module 
%(by localizing 
%and studying the graph alone) but not a 
%free module.  
%\end{rem}

\begin{ex} (Bruhat graph.)
%Let $X(x)$ be a Schubert variety. Then $X$ is $T$-linear and 
%$K^T(X(w))$ is $R(T)$-free. It follows that $\K(X(w))$ is free 
%as well, and the equivariant Grothendieck polynomials 
%are elements of $PE_T(X(w))$. %, as opposed to $K_T(X)$ or $K^T(X)$.  
Let $G$ be a connected reductive group with Borel subgroup $B$ 
and maximal torus $T\subset B$. Let $W$ be the Weyl group of 
$(G,T)$. It is a finite group generated by reflections 
$\{s_\alpha\}_{\alpha \in \Phi}$, where $\Phi$ 
stands for the set of roots of $(G,T)$. 
Now let $X(w)=\overline{BwB/B}\subset G/B$  
be the Schubert variety associated to $w\in W$. 
In what follows we extend %extension of 
the usual picture of $K_T(G/B)$
to $\K(X(w))$. 
%, 
%the equivariant operational $K$-theory of the    
Denote by $I_w$ the Bruhat interval 
$$[1,w]=\{x\in W\,|\,x\leq w\}.$$ 
Notice that $X(w)^T=I_w$. 
%Let $i:(X(w))^T\to X(w)$ be the inclusion of the fixed point set.  
%and identify $(G/B)^T$ with $W$. 
As a shorthand, set $R=R(T)$.  
Then, %$\K(X(w)^T)=\oplus_{x\in I_w}R$. 
%, 
%a subalgebra of $R[W]$.
%
%$S$-algebras. %with compatible $W$-action. 
%, as an $S$-algebra 
%with compatible action of $W$.
%Since $X(w)$ has a paving by affine spaces,  
by Theorem \ref{gkm.thm}, 
%the image of $$i^*:\K(X(w))\to \oplus_{x\in I_w}R$$ 
$\K(X(w))$ is the subring of 
$\oplus_{x\in I_w}R$
consisting  of all $\sum_{x\in I_w}f_x x$ 
such that $f_x\cong f_{s_\alpha x}\mod 1-e^{-\alpha}$,  
whenever $(i)$ $s_\alpha$ is a reflection of $W$ and $(ii)$
$x,s_\alpha x \in I_w$. Finally,   
$\K(X(w))$ is a free $R(T)$-module of rank $|I_w|$. 
This is a consequence of equivariant Kronecker 
duality (Property (f), Introduction) together 
with the fact that  
$K^T(X(w))$ 
is a free $R(T)$-module of rank $|I_w|$ (for   
$X(w)$ has a paving by affine spaces, 
cf. \cite[Lemma 1.6]{uma:kth}). 
\end{ex}

\begin{rem}
Let $X$ be a $T$-skeletal variety. By Theorem 
\ref{gkm.thm}, the $R(T)$-algebra $\K(X)$ identifies to 
$PE_T(X)\subset R(T)^m$. Moreover, $PE_T(X)$ and $R(T)^m$ 
have the same quotient field (by the localization theorem).
It follows that $PE_T(X)$ is a reduced, finitely generated 
$\Z$-algebra. The same holds for the natural extension 
$PE_T(X)_k:=PE_T(X)\otimes k$, a $k$-algebra of  
dimension $d=dim(T)$. Let $V(X)$ be the corresponding 
affine $k$-variety (defined over a finite algebraic extension 
of $\Q$). It is worth noting that 
the associated map $i_T^*:\K(X)_k\to k[T]^m$ is the normalization 
(cf. \cite[Proposition 2]{bri:pd}). For this, first 
observe that 
$k[T]$ is a subring of $\K(X)_k$, as a choice of fixed 
point yields a section of the structural map $\K(X)_k\to \K(pt)_k\simeq k[T]$. 
Secondly, $k[T]^m$ is a finite module over $k[T]$, so it is also a finite 
% $k[T]^m$ is finite 
%as a 
module over 
$\K(X)$.   
Finally, since $k[T]^m$ is integrally closed 
in its quotient field and   
 $\K(X)_k$ and $k[T]^m$ have the 
same quotient field (by the localization theorem), we conclude that  
$i^*$ is the normalization. 
Hence, the normalization of the affine variety $V(X)$ is the union of $|X^T|$
disjoint copies of $T$. %, indexed by the $T$-fixed points. 
Moreover, the set $V(X)$ is obtained as follows: for any  
character $\chi$ associated to a $T$-invariant curve 
with fixed points $x$ and $y$ 
we identify the toric hyperplanes 
$\{1-e^{\chi}=0\}$ in $T_x$ and $T_y$, provided $x\neq y$. 
If the aforementioned $x$ and $y$ are the same, 
then we set $T_x=T_y$, in accordance with Proposition \ref{1.2.fix.point.curves.prop}.   
%(cf. \cite[Proposition 2]{bri:pd}). 
%Use the fact that there are finitely many invariant curves to make the presentation simpler!   
\end{rem}

\section{Equivariant operational $K$-theory rings of projective group embeddings}

%\noindent {\bf Additional notation.} 
%To our existent notation, we add the following. 
Throughout this section  
%and to be consistent with our 
%previous notation, 
%In the subsequent pages %what follows 
we denote by  
$G$ %be 
a connected reductive 
linear algebraic group (over $k$) with Borel subgroup 
$B$ and maximal torus $T\subset B$.  
We denote by $W$ the Weyl group of $(G,T)$. 
Observe that $W$ is generated by reflections $\{s_\alpha\}_{\alpha \in \Phi}$, 
where $\Phi$ stands for the set of roots of $(G,T)$. We write  
$U_\alpha$ for the unipotent subgroup of $G$ associated to $\alpha \in \Phi$. 
Since $W$ acts on 
$\Delta$, the character group of $T$, 
there is a natural action of $W$ on $\Z[\Delta]$ 
%we have the following natural action 
%of $W$ 
given by $w(e^\lambda)=e^{w(\lambda)}$, for each 
$w\in W$ and $\lambda \in \Delta$. 
Recall that we can identify 
$R(G)$ with $R(T)^W$ via restriction to $T$, 
where 
$R(T)^W$ denotes the subring of $R(T)$ invariant under the action of $W$. 

%In this section, group actions are assumed to be 
%locally linear, i.e. the schemes 
%we consider are 
%covered by invariant quasi-projective 
%open subsets (and hence by invariant affine 
%open subsets in the case of torus actions) 
%upon which the action is linear.   
%This assumption is fulfilled, e.g., 
%for $G$-stable subschemes of 
%normal $G$-schemes \cite{su:eq}. 

\smallskip

An affine algebraic monoid $M$ is called {\em reductive} it is 
irreducible, normal, and its unit group is a reductive 
algebraic group. See \cite{re:lam} for many details. 
Let $M$ be a reductive monoid with zero and unit group $G$.
%Let $M$ be a reductive monoid with zero and unit group $G$. 
We denote by $E(\overline{T})$ the idempotent set of the associated 
affine torus embedding $\overline{T}$,  
that is, $E(\overline{T})=\{e\in \overline{T}\,|\, e^2=e\}.$
One defines a partial order on $E(\overline{T})$ by declaring 
$f\leq e$ if and only if $fe=f$.
Denote by $\Lambda \subset E(\overline{T})$, the cross section lattice of $M$.
The Renner monoid $\mathcal{R}\subset M$ is a finite monoid
whose group of units is $W$ %(the Weyl group) 
and contains $E(\overline{T})$
as idempotent set. In fact, 
any $x\in \mathcal{R}$ can be written as $x=fu$, 
where $f\in E(\overline{T})$ and $u\in W$. 
Given $e\in E(\overline{T})$, we write $C_W(e)$ for 
the centralizer of $e$ in $W$. 
Denote by $\mathcal{R}_k$ the set of elements of rank $k$ in $\mathcal{R}$, 
that is, 
$\mathcal{R}_k=\{x\in \mathcal{R} \, | \, \dim{Tx}=k \,\}.$
Analogously, one has 
$\Lambda_k\subset \Lambda$ and $E_k\subset E(\overline{T})$. 
%
%Having introduced the required notation, we now proceed 
%to the main subject of this section. 

\smallskip

%%%%%%%%%%%%%%%%%%%%%%% Definition embeddings
%Recall 
%Let $G$ be a complex connected reductive group. % and let $T$ be a maximal torus of  $G$. 
%that 

A normal irreducible variety $X$ 
is called an {\em embedding} of $G$, 
or a {\em group embedding}, 
if $X$ is a $G\times G$-variety containing an open orbit 
isomorphic to $G$. Due to %As a consequence of 
the Bruhat decomposition, group embeddings are spherical 
$G\times G$-varieties. 
%%It is worth emphasizing that this a generalization of the notion %%of {\em toric varieties}, 
%%objects that have been studied extensively in algebraic geometry. %for nearly forty years (\cite{da:tor, dp:sym, bif:reg}). 
Substantial information about the topology of a group 
embedding can be obtained by restricting one's attention 
to the induced action of $T\times T$.  
When $G=B=T$, we get back the notion of toric varieties.  
%
%\smallskip
%
Let $M$ be a reductive monoid with zero and unit group $G$.
Then there exists a central one-parameter subgroup 
$\epsilon:\mathbb{G}_m^*\to T$,
with image $Z$, 
such that $\displaystyle \lim_{t\to 0}\epsilon(t)=0$. 
Moreover, 
the quotient space $$\P_\epsilon(M):=(M\setminus\{0\})/Z$$
is a normal projective variety on which $G\times G$ acts via
%$
%G\times G\times \mathbb{P}_\epsilon(M)\to \mathbb{P}_\epsilon(M),\;
$(g,h)\cdot [x]=[gxh^{-1}].
$
Hence, $\P_\epsilon(M)$ is a normal projective embedding of the quotient group $G/Z$.
%%Embeddings of the form $\P_\epsilon(M)$ are called {\bf standard group embeddings}. 
These varieties were introduced 
by Renner in his study of algebraic monoids 
(\cite{re:hpoly}, \cite{re:ratsm}). %\cite{re:hpolyirr}).
Notably, normal projective embeddings of  
connected reductive groups are exactly the 
projectivizations of normal algebraic monoids 
%for an appropriate monoid $M$ 
%(\cite{ab:em}, 
\cite{ti:sph}.
%That is, standard embeddings form a very natural class 
%from the viewpoint of embedding theory. 
%We restrict to the case of normal projective embeddings 
%in this section. 

%%%%%%%%%%%%%%%%%%%%%%%

\medskip

Now let $X=\P_\epsilon(M)$ be a (projective) group embedding. In 
\cite{go:equiv} we compute the finite GKM data
coming from the $T\times T$-fixed points and $T\times T$-invariant curves of 
$X$ in terms of the combinatorial 
invariants of $M$. 
These computations are independent of whether or 
not $X$ is rationally smooth. 
%We summarize these results below.  

\begin{thm}[\protect{\cite[Theorems 3.1, 3.5]{go:equiv}}]\label{gkm_data_emb.thm} 
Let $X=\mathbb{P}_\epsilon(M)$ be a projective group embedding. 
Then its natural $T\times T$-action
\[
\mu:T\times T\times \mathbb{P}_\epsilon(M)\to \mathbb{P}_\epsilon(M),\; \; \;
(s,t,[x])\mapsto [sxt^{-1}]
\]
is $T\times T$-skeletal. Indeed, after identifying the
elements $x$ of $\mathcal{R}_1$ with their corresponding images $[x]$ in $X$, 
the set $X^{T\times T}$ corresponds to $\mathcal{R}_1$. 
As for the closed $T\times T$-curves of $X$, 
they fall into three types:
\begin{enumerate}
\item $\overline{U_\alpha [ew]}$, $e\in E_1(\overline{T})$, $s_\alpha\notin C_W(e)$ and $w\in W$.% (those fixed pointwise by $T$ on the right).
\item $\overline{[we]U_\alpha}$, $e\in E_1(\overline{T})$, $s_\alpha\notin C_W(e)$ and $w\in W$.% (those fixed pointwise by $T$ on the left).
\item $\overline{[TxT]}=\overline{[Tx]}=\overline{[xT]}$, where $x\in\mathscr{R}_2$. %(the remaining cases).
\end{enumerate}
\end{thm}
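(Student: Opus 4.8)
This is \cite[Theorems 3.1 and 3.5]{go:equiv}; I outline the argument I would give. The plan is to translate the problem into the combinatorics of the Renner monoid through the Bruhat--Renner decomposition, and then to extract the fixed points and invariant curves from the local structure of $X$ at a fixed point. First I would recall that $M\setminus\{0\}=\bigsqcup_{r\in\mathcal{R}\setminus\{0\}}BrB$, which descends to a finite partition $X=\bigsqcup_{r}[BrB]$ into $B\times B$-orbits, the $G\times G$-orbits being indexed by the cross-section lattice $\Lambda$. Writing $x=eu$ with $e\in E(\overline{T})$ and $u\in W$, the commutativity of $\overline{T}$ yields $TxT=Tx=xT$, a toric orbit of dimension $\operatorname{rank}(x)$ on which $Z$ (the image of $\epsilon$) acts with one-dimensional orbits; hence $(T\times T)\cdot[x]$ has dimension $\operatorname{rank}(x)-1$ in $X$. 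Consequently $[x]$ is $T\times T$-fixed exactly when $x\in\mathcal{R}_1$, and the one-dimensional $T\times T$-orbits of ``toric type'' are exactly the $[TxT]$ with $x\in\mathcal{R}_2$, producing the curves of type (3).

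Next I would verify that $\mathcal{R}_1$ accounts for \emph{all} $T\times T$-fixed points, so that $X^{T\times T}$ is finite. After a $W$-conjugation the idempotent part of an element of $\mathcal{R}_1$ lies in $\Lambda_1$, and since the rank-one elements of $\Lambda$ are the minimal ones, each point $[x]$ with $x\in\mathcal{R}_1$ sits in a closed $G\times G$-orbit; these closed orbits are flag-variety products $G/P^-\times G/P$ (with $P$ attached to $e\in\Lambda_1$) whose $T\times T$-fixed points are precisely the $[ew]$, $e\in E_1(\overline{T})$, $w\in W$, i.e.\ $\mathcal{R}_1$, and conversely a dimension count via the first paragraph forces any $T\times T$-fixed point into some $[BrB]$ with $r\in\mathcal{R}_1$. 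For the curves, at a fixed point $[ew]$ I would invoke the local structure of the spherical $G\times G$-variety $X$ near $[ew]$ — obtained from the affine charts that the monoid structure provides (cf.\ \cite{re:lam}) — to get a $T\times T$-stable affine open neighbourhood that splits, as a $T\times T$-variety, into an affine space assembled from the root subgroups $U_\alpha$ with $s_\alpha\notin C_W(e)$ and an affine toric variety built from $\overline{T}$. Its one-dimensional $T\times T$-orbits are the coordinate lines of the affine-space factor, closing up to the curves $\overline{U_\alpha[ew]}$ and $\overline{[we]U_\alpha}$ of types (1) and (2), together with the toric curves of type (3) already found. Since every $T\times T$-invariant irreducible curve meets $X^{T\times T}$, running over all $[x]$ with $x\in\mathcal{R}_1$ exhausts the invariant curves, and the finiteness of $\mathcal{R}$ gives the $T\times T$-skeletal property.

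The hard part is the bookkeeping concentrated in this last step: one must check that the affine chart at $[ew]$ genuinely decomposes, $T\times T$-equivariantly, as the predicted product of ``$U_\alpha$-directions'' (for exactly the roots with $s_\alpha\notin C_W(e)$) and ``$\overline{T}$-directions'', and that the three families of curves are complete, with precisely the endpoints and characters asserted. This requires the full interplay of $\mathcal{R}$, $\Lambda$, $E(\overline{T})$ and the $U_\alpha$ in the structure theory of reductive monoids, and it is what \cite{go:equiv} carries out in detail.
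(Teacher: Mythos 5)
The paper does not prove this theorem; it is imported verbatim from \cite[Theorems 3.1 and 3.5]{go:equiv}, and your sketch follows the same route as that reference: the Bruhat--Renner decomposition, the identity $\dim(T\times T)\cdot[x]=\dim Tx-1$ (since $Z$ acts with trivial stabilizer on nonzero elements) to locate the fixed points in $\mathcal{R}_1$ and the toric curves in $\mathcal{R}_2$, and the local structure of $X$ along the closed $G\times G$-orbits to produce the $U_\alpha$-curves. Your outline is correct, with the genuinely technical verifications (completeness of the list of curves and the equivariant splitting of the affine charts) rightly identified as the content of the cited proof.
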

%\smallskip

%In particular, rationally smooth standard embeddings are GKM varieties. 
The curves of type 1 and 2 lie entirely in closed $G\times G$-orbits, 
whereas the curves of type 3 do not. Curves of type 3 can be further
separated into whether or not the corresponding 
$T\times T$-fixed points are in the same closed $G\times G$-orbit. 
In \cite[Section 4]{go:equiv}, we identify explicitly
the $T\times T$-characters associated to these curves. 
%GKM curves of Theorem \ref{gkm_data_emb.thm}. 
With such data at our disposal, 
Theorem \ref{gkm.thm} 
%we extend such description 
%to 
yields an immediate 
translation of 
\cite[Theorem 4.10]{go:equiv} 
into the language of 
equivariant operational $K$-theory. Furthermore,  
as Theorem \ref{gkm.thm} does not 
require any conditions on the singular locus, 
the result (Theorem \ref{gkm.emb.thm}) 
applies to all projective group embeddings.  
This description  
coincides with that of Uma (\cite[Theorem 2.1]{uma:kth})
when $X=\P_\epsilon(M)$ is smooth, 
and it extends our previous work on 
rationally smooth group embeddings \cite{go:equiv}. 
To state it, we record a few extra facts. 
Let $\Lambda_1$ be the set of rank-one idempotents of the cross-section lattice $\Lambda$. 
Each closed $G\times G$-orbit of $X=\P_\epsilon(M)$ can be written uniquely as $G[e]G\simeq G/P_e\times G/P^-_e$,
where $e\in \Lambda_1$, and $P_e$, $P^-_e$ are opposite 
parabolic subgroups (see e.g. \cite{re:lam}).  
%Now Theorem 
%\ref{gkm.thm} gives immediately the following. 
%Our main result in Section 4 is a generalization of \cite{bri:bru}, Theorem 3.1.1. 
%It asserts the following.

%\medskip

\begin{thm}\label{gkm.emb.thm} 
Let $X=\mathbb{P}_\epsilon(M)$ be a group embedding. Then the natural map 
      $$\KTT(X) \longrightarrow \KTT\left( \bigsqcup_{e\in \Lambda_1} G[e]G \right)
=\bigoplus_{e\in \Lambda_1}K_{T\times T}(G[e]G)$$ 
      %induced by restriction to the union of closed $G\times G$-orbits, 
      is injective. In fact,  
       its image consists of all tuples $(\varphi_e)_{e\in \Lambda_1}$, indexed over $\Lambda_1$ and with 
       $\varphi_e \in K_{T\times T}(G[e]G)$, 
       subject to the additional conditions:
   \medskip
   \begin{enumerate}[(a)]
          \item If $f\in E_2(\overline{T})$ and 
there is a (necessarily unique) reflection $s_{\alpha_f}$  such that 
$s_{\alpha_f} f=fs_{\alpha_f}\neq f$, then
                $$\varphi_{e_f}(f_1 u)\equiv \varphi_{e_f}(f_2 u) \mod 1-e^{-\alpha_f}\otimes e^{-(\alpha_f \circ {\rm int}(u))},$$
                for all $u\in W$. 
                Here, $f_1$ and $f_2=s_{\alpha_f}\cdot f_1 \cdot s_{\alpha_f}$ are the two idempotents in $E_1(\overline{T})$ below $f$, 
                the root $\alpha_f$ corresponds to the reflection $s_{\alpha_f}$,
                and 
                $e_f \in \Lambda_1$ is the unique element of $\Lambda_1$ which is conjugate to $f_1$. 
                
\smallskip
           \item If $f\in E_2(\overline{T})$ and $sf=fs=f$ for every reflection 
$s\in W$, then 
                 $$\varphi_{e_1}(f_1 u)\equiv \varphi_{e_2}(f_2 u) \mod 1-e^{-\lambda_f}\otimes e^{-(\lambda_f \circ int(u))},$$
                 for all $u\in W$. Here,  
                 $\lambda_f$ is the character of $T$ defined by the composition 
                 $$T\to Tf\to Tf/k^*\simeq k^*, $$  
                 the idempotents $f_1, f_2$ are the unique idempotents below $f$,
                 and $e_i\in \Lambda_1$ is conjugate to $f_i$, for $i=1,2$. 
    \end{enumerate} %\hfill $\square$
\end{thm}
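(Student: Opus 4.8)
The plan is to deduce Theorem~\ref{gkm.emb.thm} directly from the general GKM description of Theorem~\ref{gkm.thm}, combined with the explicit combinatorial data of the GKM graph $\Gamma(X)$ provided by Theorem~\ref{gkm_data_emb.thm}. By Theorem~\ref{gkm.thm}, $\KTT(X)\simeq PE_{T\times T}(X)$, the ring of piecewise exponential functions on $X^{T\times T}=\mathcal{R}_1$ subject to one congruence per $T\times T$-invariant curve. So the content of the proof is a bookkeeping translation: organize the fixed points $\mathcal{R}_1$ and the curves of types 1, 2, 3 (Theorem~\ref{gkm_data_emb.thm}) so that the resulting system of congruences becomes the list of conditions (a), (b). First I would fix the identification $\mathcal{R}_1=\bigsqcup_{e\in\Lambda_1}C_W(e)\backslash W\times$ (suitable index set); more precisely, each closed $G\times G$-orbit is $G[e]G\simeq G/P_e\times G/P^-_e$ for a unique $e\in\Lambda_1$, and the $T\times T$-fixed points of $G[e]G$ are indexed by $W/W_{P_e}\times W/W_{P^-_e}$, which matches the rank-one elements of $\mathcal{R}$ lying in the closure of that orbit. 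Under this bookkeeping, $\KTT(\bigsqcup_e G[e]G)=\bigoplus_e K_{T\times T}(G[e]G)$, and by the smooth GKM theorem (Property (b) of the Introduction together with \cite[Theorem 2]{vv:hkth}, or Uma \cite{uma:kth}), each $K_{T\times T}(G[e]G)$ is itself a ring of piecewise exponential functions on $(G[e]G)^{T\times T}$.

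Next I would show that the natural restriction $\KTT(X)\to\bigoplus_e K_{T\times T}(G[e]G)$ is injective. Since each $G[e]G$ is a closed $T\times T$-invariant subvariety and $\bigcup_e G[e]G$ contains $X^{T\times T}$, injectivity is immediate from Corollary~\ref{injectivity.cor}: the composite $\KTT(X)\to\KTT(\bigsqcup_e G[e]G)\to\KTT(X^{T\times T})$ is the injective map $i^*_{T\times T}$. The harder half is identifying the image. Every curve of type 1 or type 2 lies entirely inside a single closed orbit $G[e]G$; the congruences these curves impose are therefore already encoded in the condition that each $\varphi_e$ lies in $K_{T\times T}(G[e]G)=PE_{T\times T}(G[e]G)$. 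So the only congruences that are \emph{not} automatically captured by the tuple $(\varphi_e)_e$ being orbit-wise piecewise exponential are those coming from type 3 curves $\overline{[TxT]}$, $x\in\mathcal{R}_2$ — these connect two rank-one fixed points and, depending on the case, those fixed points can lie in the same or in different closed $G\times G$-orbits. I would then invoke the explicit character computations of \cite[Section 4]{go:equiv} (summarized as \cite[Theorem 4.10]{go:equiv}): a type 3 curve corresponds to a rank-two idempotent $f\in E_2(\overline{T})$, its two boundary fixed points are the two rank-one idempotents $f_1,f_2$ below $f$ (translated by Weyl elements), and the associated $T\times T$-character is exactly $\alpha_f\otimes(\alpha_f\circ\mathrm{int}(u))$ in case (a) (when $f$ admits a reflection $s_{\alpha_f}$ with $s_{\alpha_f}f=fs_{\alpha_f}\neq f$), and $\lambda_f\otimes(\lambda_f\circ\mathrm{int}(u))$ in case (b) (when $f$ is central in the sense $sf=fs=f$ for all reflections). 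Matching these two cases against the dichotomy ``the two fixed points are / are not in the same closed orbit'' gives precisely conditions (a) and (b).

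Assembling: the image of $\KTT(X)$ in $\bigoplus_e K_{T\times T}(G[e]G)$ consists of tuples $(\varphi_e)_e$ with each $\varphi_e\in PE_{T\times T}(G[e]G)$ (automatic from the target) together with the extra congruences coming from type 3 curves, i.e. (a) and (b). One checks that in case (a) both fixed points $f_1u$ and $f_2u=s_{\alpha_f}f_1s_{\alpha_f}u$ lie in the same closed orbit $G[e_f]G$ (here $e_f\in\Lambda_1$ is the $\Lambda_1$-representative conjugate to $f_1$), so the congruence is internal to a single $\varphi_{e_f}$; in case (b) the two fixed points lie in distinct closed orbits $G[e_1]G$ and $G[e_2]G$, so the congruence genuinely links $\varphi_{e_1}$ and $\varphi_{e_2}$. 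Conversely any tuple satisfying (a), (b) lifts to an element of $PE_{T\times T}(X)$ because we have accounted for \emph{every} $T\times T$-invariant curve of $X$ by Theorem~\ref{gkm_data_emb.thm}, and by Theorem~\ref{gkm.thm} such an element is a class in $\KTT(X)$. The main obstacle will be the second step — carefully verifying the curve-to-congruence dictionary, in particular confirming via \cite{go:equiv} that the characters attached to type 3 curves are exactly the split tensors $\alpha_f\otimes(\alpha_f\circ\mathrm{int}(u))$ and $\lambda_f\otimes(\lambda_f\circ\mathrm{int}(u))$ and that the ``same orbit / different orbit'' trichotomy aligns precisely with the reflection-centralizer condition separating (a) from (b); once that is pinned down, the rest is a formal consequence of Theorem~\ref{gkm.thm} and Corollary~\ref{injectivity.cor}.
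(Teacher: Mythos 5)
Your proposal is correct and follows essentially the same route as the paper: injectivity via Corollary \ref{injectivity.cor} (since $X^{T\times T}\subset\bigsqcup_{e\in\Lambda_1}G[e]G$), smoothness of the closed orbits to identify $\KTT(G[e]G)$ with $K_{T\times T}(G[e]G)$, and then Theorem \ref{gkm.thm} with the observation that type 1 and 2 curves are absorbed into the orbit-wise GKM description while type 3 curves, via the character computations of \cite[Theorem 4.10]{go:equiv}, yield exactly conditions (a) and (b). The paper's proof is just a terser version of the same bookkeeping.
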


\begin{proof}
Since $X^{T\times T}\subset \bigsqcup_{e\in \Lambda_1} G[e]G$, 
Corollary \ref{injectivity.cor} renders the natural map 
$\KTT(X) \to \KTT\left( \bigsqcup_{e\in \Lambda_1} G[e]G \right)$ 
injective. Moreover, $G[e]G$ is smooth, so 
${\rm op}K_{T\times T}(G[e]G)$ is 
isomorphic to $K_{T\times T}(G[e]G)$. 
Finally, we apply Theorem \ref{gkm.thm},  
taking into account that:   

\noindent (i) the curves of type 1 and 2 in 
Theorem \ref{gkm_data_emb.thm} are contained in $\bigsqcup_{e\in \Lambda_1} G[e]G$ 
and these curves describe $K_{T\times T}(G[e]G)$ via Example 5.5, 

\noindent (ii) the characters associated to the curves of type (3) give 
assertions (a) and (b), as in 
\cite[Theorem 4.10]{go:equiv}.  
\end{proof}

\medskip

If $X=\P_\epsilon(M)$ is a group embedding, then 
$X$ is $G\times G$-spherical. If moreover $\pi_1(G)$ 
is torsion free, then  
Corollary \ref{ring.W.inv.cor} states that  
${\rm op}K_{G\times G}(X)$ can be 
read off from $\KTT(X)$ by computing invariants:
$$
K_{G\times G}(X)\simeq \KTT(X)^{W\times W}.
$$
%where $W$ is the Weyl group of $(G,T)$. 

%\medskip

\begin{cor}\label{GG_opk_emb.cor}
Let $X=\mathbb{P}_\epsilon(M)$ be a group embedding. 
If $\pi_1(G)$ is torsion free, then  
the ring ${\rm op}K_{G\times G}^*(X)$ consists of 
all tuples $(\Psi_e)_{e\in \Lambda_1}$, where 
$$\Psi_e:WeW \to (R(T) \otimes R(T))^{C_W(e)\times C_W(e)},$$
such that 
%for every $x=fu \in \mathcal{R}_2$ with $u\in W/C_W(e)$ for some $e\in \Lambda_1$, the following hold:
\begin{enumerate}[(a)]
 \item If $f \in E_2(\overline{T})$ and $H_f=\{f,s_{\alpha_f} f\}$, then  
       $$\Psi_e(f_1 )\equiv \Psi_e(f_2 ) \mod 1-e^{-\alpha_f}\otimes e^{-\alpha_f},$$ %{\rm \; \; for\;every\;} u\in W/C_W(e),$$        
       where $e\in \Lambda_1$ is conjugate to $f_1$, 
       $f_2=s_{\alpha_f}\cdot f_1\cdot s_{\alpha_f}$, the reflection $s_{\alpha_f} \in C_W(f)$ is associated with the root $\alpha_f$, 
       and $f_i\leq f$.

 \item If $f\in E_2$ and $H_f=\{f\}$, then 
       $$\Psi_e(f_1)\equiv \Psi_{e'}(f_2) \mod 1-e^{-\lambda_f}\otimes e^{-\lambda_f},$$
       where $\lambda_f$ is character of $T$ defined by $f$, 
       and $f_1, f_2\leq f$ are conjugate to $e$ and $e'$, respectively. 
\end{enumerate}
\end{cor}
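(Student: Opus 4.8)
The plan is to derive Corollary \ref{GG_opk_emb.cor} from Theorem \ref{gkm.emb.thm} by passing to $W\times W$-invariants. First I would invoke the comparison between $G\times G$-equivariant and $T\times T$-equivariant operational $K$-theory, namely the isomorphism $\KG(X)\simeq \K(X)^W$ valid for $G$-spherical varieties when $\pi_1(G)$ is torsion free (this is the Corollary \ref{ring.W.inv.cor} referenced in the text, whose proof will rely on $\A^1$-homotopy invariance, Property (c), together with the standard descent from $T$ to $G$ via the fibration $G/B$; applied to $G\times G$ acting on $X$ it yields $\KTT(X)^{W\times W}\simeq {\rm op}K_{G\times G}(X)$). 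So it suffices to compute the $W\times W$-fixed subring of the explicit description of $\KTT(X)$ furnished by Theorem \ref{gkm.emb.thm}.

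Next I would unwind what $W\times W$-invariance means termwise. An element of $\KTT(X)$ is a tuple $(\varphi_e)_{e\in\Lambda_1}$ with $\varphi_e\in K_{T\times T}(G[e]G)$, and $G[e]G\simeq G/P_e\times G/P_e^-$. The Weyl group $W\times W$ permutes the closed orbits $G[e]G$ according to the conjugation action on idempotents, so for $(\varphi_e)$ to be invariant the whole tuple is determined by its values on a set of representatives $e\in\Lambda_1$, and on each such $e$ the component $\varphi_e$ must be invariant under the stabilizer, which is $C_W(e)\times C_W(e)$. Using the GKM description of $K_{T\times T}(G/P_e\times G/P_e^-)$ as piecewise exponential functions on the $T\times T$-fixed points $WeW$ — or more precisely the identification of $C_W(e)$-invariant such functions — one gets that $\varphi_e$ is equivalent to a function $\Psi_e\colon WeW\to (R(T)\otimes R(T))^{C_W(e)\times C_W(e)}$, which is exactly the shape asserted in the statement. (This is the same reindexing already done in \cite{go:equiv} and in the cohomological analogue, so I would cite that rather than redo it.)

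Then I would push the congruences (a) and (b) of Theorem \ref{gkm.emb.thm} through this reindexing. Congruence (a) of the theorem reads $\varphi_{e_f}(f_1u)\equiv\varphi_{e_f}(f_2u)\bmod 1-e^{-\alpha_f}\otimes e^{-(\alpha_f\circ{\rm int}(u))}$ for all $u\in W$; averaging over $u$ (equivalently, evaluating the $C_W(e)\times C_W(e)$-invariant function $\Psi_e$, for which the twist by ${\rm int}(u)$ disappears from the relevant fixed-point components) collapses the family of congruences indexed by $u$ into the single congruence $\Psi_e(f_1)\equiv\Psi_e(f_2)\bmod 1-e^{-\alpha_f}\otimes e^{-\alpha_f}$, which is (a) of the corollary. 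The same maneuver turns congruence (b) of the theorem into (b) of the corollary, with $e$ and $e'$ the $\Lambda_1$-representatives conjugate to $f_1$ and $f_2$. Conversely, any tuple $(\Psi_e)$ satisfying (a) and (b) lifts to a $W\times W$-invariant tuple $(\varphi_e)$ satisfying the conditions of Theorem \ref{gkm.emb.thm}, because the twisted congruences for general $u$ follow from the untwisted one by applying the $C_W(e)$-action.

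The main obstacle I anticipate is bookkeeping rather than conceptual: making precise how the $W\times W$-action permutes the $\Lambda_1$-indexed pieces and stabilizes each $G[e]G$ with stabilizer $C_W(e)\times C_W(e)$, and checking that the descent isomorphism $\KG(X)\simeq\K(X)^W$ of Corollary \ref{ring.W.inv.cor} is actually available here — i.e. that $\P_\epsilon(M)$ is $G\times G$-spherical (it is, being a group embedding) and that the torsion-freeness hypothesis on $\pi_1(G)$ is exactly what makes the $T\times T$-to-$G\times G$ comparison an isomorphism on the nose. Once those two points are in place, translating the congruences is a routine averaging argument. I would therefore keep the write-up short: cite Corollary \ref{ring.W.inv.cor} for the invariants description, cite \cite[Theorem 4.10]{go:equiv} (or its cohomological-to-$K$-theoretic transcription already used in the proof of Theorem \ref{gkm.emb.thm}) for the reindexing of $\varphi_e$ as $\Psi_e$, and then spell out the one-line averaging that removes the ${\rm int}(u)$-twist.
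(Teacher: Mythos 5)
Your proposal is correct and follows essentially the same route as the paper: the paper's proof is precisely to combine Theorem \ref{gkm.emb.thm} with the invariants isomorphism $\KTT(X)^{W\times W}\simeq {\rm op}K_{G\times G}(X)$ of Corollary \ref{ring.W.inv.cor} and then adapt the reindexing/untwisting argument of the cohomological analogue in \cite[Corollary 4.11]{go:equiv}. The only quibble is your parenthetical guess at how Corollary \ref{ring.W.inv.cor} is proved (it goes through $G$-equivariant Kronecker duality and Merkurjev's theorem, not $\A^1$-homotopy invariance), but since you only cite that result, this does not affect the argument.
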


\begin{proof}
Simply adapt the proof of \cite[Corollary 4.11]{go:equiv}, 
using Theorem \ref{gkm.emb.thm} 
and Corollary A.4.  
\end{proof}

\smallskip

Associated to $X=\P_\epsilon(M)$, there is a
projective torus embedding $\mathcal{Y}$ of $T/Z$, namely, 
$$\mathcal{Y}=\P_\epsilon(\overline{T})=[\overline{T}\setminus \{0\}]/Z.$$
By construction, $\mathcal{Y}$ is a normal projective toric variety and $\mathcal{Y}\subseteq X$. 
Our next theorem 
allows to compare the equivariant operational $K$-theories of 
$X$ %=\mathbb{P}_\epsilon(M)$
and its associated torus embedding $\mathcal{Y}\subseteq X$.  
The situation for general group embeddings contrasts deeply with 
the corresponding one for regular embeddings 
(\cite[Corollary 3.1.2]{bri:bru}, \cite[Corollary 2.2.3]{uma:kth}). 
%It is worth noting 
%that 
%the idea of comparing  
%$\mathcal{Y}$ and $X$ 
%goes back to \cite{lp:equiv}.

%\medskip

\begin{thm}\label{comparison.emb.thm}
Notation being as above, if $\pi_1(G)$ is torsion free, then 
the inclusion of the associated torus embedding $\iota:\mathcal{Y}\hookrightarrow X$ 
%be the inclusion of the associated torus embedding.
%Then $i^*$ 
induces an injection:
$$
\xymatrix{\iota^*:{\rm op}K_{G\times G}^*(X) \ar@{^(->}[r] & \KTT(\mathcal{Y})^W\simeq (\K(\mathcal{Y})\otimes R(T))^W,}
$$
where the $W$-action on $\KTT(\mathcal{Y})$ is induced from the action of {\rm diag}$(W)$ on $\mathcal{Y}$. 
Moreover, $\iota^*$ is an isomorphism whenever 
%$X$ is toroidal, that is, 
$C_W(e)=\{1\}$ for every $e\in \Lambda_1$. 
\end{thm}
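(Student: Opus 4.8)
The plan is to compare both sides through their restrictions to $T\times T$-fixed points, using the GKM descriptions already in hand, and to realize $\iota^{*}$ as the inclusion of a ``diagonal'' sub-object. First I would check that $\mathcal{Y}$ is a $T\times T$-stable closed subvariety of $X$ — indeed $\overline{T}$ is a submonoid of $M$ containing $T$, so $sxt^{-1}\in\overline{T}$ whenever $x\in\overline{T}$ and $s,t\in T$ — so that $\iota^{*}\colon\KTT(X)\to\KTT(\mathcal{Y})$ is a well-defined $R(T\times T)$-algebra map; likewise $\diag(N_{G}(T))$ preserves $\mathcal{Y}$, whence $\diag(W)$ acts on $\mathcal{Y}$ and $\iota$ is $\diag(W)$-equivariant. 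I would also record the fixed-point picture: by Theorem \ref{gkm_data_emb.thm}, $X^{T\times T}=\mathcal{R}_{1}=\bigsqcup_{e\in\Lambda_{1}}WeW$, with $WeW\cong(W/C_{W}(e))\times(W/C_{W}(e))$ as a $W\times W$-set, whereas $\mathcal{Y}^{T\times T}=E_{1}(\overline{T})=\bigsqcup_{e\in\Lambda_{1}}W/C_{W}(e)$ embeds in $X^{T\times T}$ as the diagonal, whose stabilizer in $W\times W$ is exactly $\diag(W)$. Since $\pi_{1}(G)$ is torsion free, Corollary \ref{ring.W.inv.cor} identifies ${\rm op}K_{G\times G}(X)$ with $\KTT(X)^{W\times W}$; as this lies in $\KTT(X)^{\diag(W)}$ and $\iota^{*}$ is $\diag(W)$-equivariant, its image lies in $\KTT(\mathcal{Y})^{\diag(W)}=\KTT(\mathcal{Y})^{W}$. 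Finally, since the conjugation action of $T$ on $\overline{T}$ is trivial one has $sxt^{-1}=(st^{-1})x$, so after the substitution $(s,t)\mapsto(st^{-1},t)$ the $T\times T$-action on the toric variety $\mathcal{Y}$ is pulled back from the $T$-action $s\cdot[x]=[sx]$; hence $\KTT(\mathcal{Y})\simeq\K(\mathcal{Y})\otimes_{\Z}R(T)$. This produces the asserted map.

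For injectivity I would argue purely on fixed points. We have the injections
$$\KTT(X)^{W\times W}\hookrightarrow\KTT(X)\hookrightarrow\KTT(X^{T\times T})$$
by Proposition \ref{inj.fix.set.lem}. Since $W\times W$ acts transitively on each block $(W/C_{W}(e))^{2}$, a $W\times W$-equivariant class on $X^{T\times T}$ is determined by a single value for each $e\in\Lambda_{1}$, which may be read off at a diagonal point, i.e.\ on $\mathcal{Y}^{T\times T}$. As this restriction factors through $\iota^{*}$ followed by the map $\KTT(\mathcal{Y})\hookrightarrow\KTT(\mathcal{Y}^{T\times T})$ — injective by Proposition \ref{inj.fix.set.lem} since $\mathcal{Y}$ is complete — the map $\iota^{*}$ is injective.

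For surjectivity under the hypothesis $C_{W}(e)=\{1\}$ for all $e\in\Lambda_{1}$, I would match the two combinatorial models directly. By Corollary \ref{GG_opk_emb.cor}, ${\rm op}K_{G\times G}(X)$ is the set of tuples $(\Psi_{e})_{e\in\Lambda_{1}}$ valued in $R(T)\otimes R(T)$ (no invariance condition, since $C_{W}(e)=\{1\}$) subject to the congruences (a) and (b), indexed by the rank-two idempotents $f\in E_{2}(\overline{T})$. On the other side, $\mathcal{Y}$ is a complete $T\times T$-skeletal toric variety, so Theorem \ref{gkm.thm} gives $\KTT(\mathcal{Y})\simeq PE_{T\times T}(\mathcal{Y})$, whose $T\times T$-invariant curves are precisely the $\overline{[Tf]}$, $f\in E_{2}(\overline{T})$, each joining the two rank-one idempotents below $f$; because $\diag(W)$ acts freely on each block $W\cdot e$ of $E_{1}$, a $\diag(W)$-invariant piecewise exponential function is the same datum as a tuple in $\bigoplus_{e\in\Lambda_{1}}R(T)\otimes R(T)$ constrained by those curve congruences. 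After normalizing by the $W$-action and inserting the explicit $T\times T$-characters of the curves $\overline{[Tf]}$ computed in \cite[Section~4]{go:equiv}, these constraints become exactly (a) and (b); hence $\iota^{*}$ is a bijection of the two models and the theorem follows.

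The step I expect to be the main obstacle is precisely this last matching: verifying term by term that, when $C_{W}(e)=\{1\}$, the relations cutting out $PE_{T\times T}(\mathcal{Y})^{W}$ agree with conditions (a) and (b) of Corollary \ref{GG_opk_emb.cor}. This rests on the character computations of \cite{go:equiv} and on the observation that the type-$(1)$ and type-$(2)$ curves of $X$ — lying in the closed orbits $G/B\times G/B^{-}$ — impose no relation once one passes to $G\times G$-invariants. It is also exactly here that the hypothesis on $C_{W}(e)$ enters: for $C_{W}(e)\neq\{1\}$, the image of $\iota^{*}$ over the $e$-block lies in $R(T)^{C_{W}(e)}\otimes R(T)^{C_{W}(e)}$, strictly smaller than the ring $(R(T)\otimes R(T))^{\diag C_{W}(e)}$ allowed in $\KTT(\mathcal{Y})^{W}$, so $\iota^{*}$ cannot be surjective for general group embeddings.
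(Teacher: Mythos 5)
Your injectivity argument and your identification $\KTT(\mathcal{Y})\simeq \K(\mathcal{Y})\otimes R(T)$ are essentially the paper's: restriction to $T\times T$-fixed points, the observation that $\Lambda_1$ indexes both the $W\times W$-orbits in $\mathcal{R}_1$ and the $W$-orbits in $E_1(\overline{T})$, and the splitting $T\times T\simeq \diag(T)\times(T\times\{1\})$. For surjectivity, however, you take a genuinely different route: a direct term-by-term comparison of the GKM presentation of $\KTT(\mathcal{Y})^W$ with conditions (a), (b) of Corollary \ref{GG_opk_emb.cor}. The paper instead introduces the auxiliary variety $\mathcal{N}=\bigcup_{w\in W}w\mathcal{Y}$, proves the union is \emph{disjoint} by counting fixed points (this is exactly where $C_W(e)=\{1\}$ enters, via $|\mathcal{R}_1|=|E_1||W|$ from \cite[Lemma 4.14]{go:equiv}), observes that all type-3 curves of $X$ lie in $\mathcal{N}$ while types 1 and 2 lie in closed orbits, and then obtains $\KTT(X)^{W\times W}\simeq\KTT(\mathcal{N})^{W\times W}\simeq(\bigoplus_{w}\KTT(\mathcal{Y}))^{W\times W}\simeq\KTT(\mathcal{Y})^W$ without ever re-deriving the congruences. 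What the geometric device buys is precisely the step you flag as your main obstacle: matching the moduli $1-e^{-\alpha_f}\otimes e^{-(\alpha_f\circ\mathrm{int}(u))}$ of Theorem \ref{gkm.emb.thm} against the characters of the curves $\overline{[Tf]}\subset\mathcal{Y}$ is a delicate bookkeeping of signs and conjugations (the $T\times T$-action on $\mathcal{Y}$ factors through $(s,t)\mapsto st^{-1}$, so the curve characters naturally appear in the form $\chi\otimes(-\chi)$, and reconciling this with the stated form of (a) and (b) requires the normalizations of \cite[Section 4]{go:equiv}). Your plan can be completed, but as written the surjectivity step is an announced verification rather than a proof; if you want to keep your approach you must carry out that comparison explicitly, whereas adopting the $\mathcal{N}$ argument lets you quote Corollary \ref{GG_opk_emb.cor} and Theorem \ref{gkm.thm} and avoid it altogether. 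Your closing remark about why surjectivity fails when some $C_W(e)\neq\{1\}$ is correct and is consistent with the paper's counting criterion.
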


\begin{proof}
The argument here is an adaptation of 
\cite[Proof of Theorem 4.12]{go:equiv}. 
First, consider the commutative diagram
$$
\xymatrix{
\KTT(X)  \ar@{^(->}[r]           \ar[d]^{\iota^*} &  \KTT(X^{T\times T}) \ar[d]^{\iota^*} \\
\KTT(\mathcal{Y}) \ar@{^(->}[r]              & \KTT({\mathcal{Y}}^{T\times T})_,                 \\
}
$$ 
where both horizontal maps are injective (Proposition \ref{inj.fix.set.lem}). 
On the other hand, 
recall that $\Lambda_1$ provides a set of representatives of 
both the $W\times W$-orbits in $X^{T\times T}=\mathcal{R}_1$
%$X^{T\times T}=\mathcal{R}_1$ 
%has a $W\times W$-action that decomposes it into $|\Lambda_1|$ orbits. 
%In turn, the intersection of these 
and the $W$-orbits 
%with 
in $\mathcal{Y}^{T\times T}=E_1(T)$. 
%splits off into $|\Lambda_1|$ $W$-orbits. 
Thus, after taking 
invariants, we obtain an injection
$$\xymatrix{
\KTT(\mathcal{R}_1)^{W\times W}=\bigoplus_{e\in \Lambda_1}((R(T)\otimes R(T))^{C_W(e)\times C_W(e)} \ar@{^(->}^{\iota^*}[d]\\ 
%\hookrightarrow 
\KTT(E_1(T))^W=\bigoplus_{e\in \Lambda_1}(R(T)\otimes R(T))^{C_W(e)}_.}
$$

Placing this information into the commutative square above   
renders the map  %shows that the restriction map 
$$\iota^*: \KTT(X)^{W\times W} \longrightarrow \KTT(\mathcal{Y})^W$$ 
%is 
injective. 
Now observe that $\KTT(\mathcal{Y})^W\simeq (\K(\mathcal{Y})\otimes R(T))^W$.
Truly, we have a split exact sequence
$$
\xymatrix{
1 \ar[r]& diag(T) \ar[r]& T\times T \ar[rrr]^{(t_1,t_2)\mapsto t_1t_2^{-1}}& & & T \ar@/^1pc/ @{-->}[lll] \ar[r]& 1,
% A \ar @{<./}@/_/ [r]& B \\ 
}
$$
%where the second map is (t1 , t2 ) → t1 t−1 and 
where the splitting is given by $t\mapsto (t, 1)$.
It follows that $T\times T$ is canonically isomorphic to $diag(T)\times(T\times {1})$. 
%Furthermore, 
%by the
%definition of the $T\times T$-action on $T$ we see that 
Clearly, 
$diag(T)$ acts trivially on $\mathcal{Y}$. 
Hence,  
%As a consequence, in light of 
by \cite[Corollary 5.5]{ap:opk}, we have a
%have a 
ring isomorphism $\KTT(\mathcal{Y})\simeq {\rm op}K_{diag(T)}\otimes \K(\mathcal{Y})$. 
This isomorphism is in fact  
$W$-invariant (because the $W$-action on the 
operational rings is induced from
the action of $diag(W)$ on $\mathcal{Y}$).

%\smallskip

For the second assertion, assume that 
$C_W(e)=\{1\}$ for all $e\in \Lambda_1$. 
We need to show that $\iota^*$ is surjective. To achieve our goal, we modify
slightly an argument of \cite{lp:equiv}, Section 4.1, and Brion \cite{bri:bru}, Corollary 3.1.2.
Define the $T\times T$-variety 
$$\mathcal{N}=\bigcup_{w\in W} w\mathcal{Y}.$$
We claim that this union is, in fact, a disjoint union.
Indeed, observe that $\mathcal{N}$ contains all the $T\times T$-fixed points of $X$.
That is, $\mathcal{N}$ has $|\mathcal{R}_1|$ fixed points.
On the other hand, each $w\mathcal{Y}$ has $|E_1|$ fixed points (for its corresponding $T$-action). 
Now, if it were the case that 
there is a pair of distinct subvarieties $w\mathcal{Y}$ and $w'\mathcal{Y}$
with non-empty intersection, then 
this intersection should also contain $T\times T$-fixed points.
But then a simple counting argument would yield $|\mathcal{R}_1|<|E_1||W|$. 
This is impossible, by our assumptions and \cite[Lemma 4.14]{go:equiv}.
Hence,  
$$\mathcal{N}=\bigsqcup_{w\in W} w\mathcal{Y}.$$

%Clearly, $\mathcal{N}$ is $T\times T$-skeletal.   
%(because each $w\mathcal{Y}$ is so, for $w\in W$). 
In this setup, Corollary \ref{injectivity.cor} implies that 
 %Since $\mathcal{N}$ contains all the $T\times T$-fixed points of $X$,
the restriction map 
$$\KTT(X)\to \KTT(\mathcal{N}).$$
is injective. 
From Theorem \ref{gkm_data_emb.thm} we know 
that all the $T\times T$-curves of $X$ are contained either
in closed $G\times G$-orbits (curves of type $1.$ and $2.$) or
in $\mathcal{N}$ (curves of type $3.$). 
Moreover, note that the curves 
of type $3.$ are exactly the $T\times T$-invariant curves of $\mathcal{N}$,
so $\mathcal{N}$ is $T\times T$-skeletal and Theorem \ref{gkm.thm}
applies  to it.
After taking $W\times W$-invariants (cf. Corollary \ref{GG_opk_emb.cor}),  
we see that the aforementioned 
%restriction 
map 
induces an isomorphism
$$\KTT(X)^{W\times W}\simeq \KTT(\mathcal{N})^{W\times W}\simeq 
\left(\bigoplus_{w\in W} \KTT(\mathcal{Y})\right)^{W\times W}
\simeq \KTT(\mathcal{Y})^W_.$$ This concludes the proof.  
\end{proof}

%\medskip

%In view of 
%\cite[Theorem 29.2]{ti:sph} and the result below,  
%smooth toroidal embeddings are exactly the regular embeddings of $G$, 
%as defined e.g. in \cite{bri:bru}. 

%It is also possible to characterize, in terms of closed $G\times G$-orbits, 
%those embeddings for which the map $\iota^*$ of Theorem \ref{comparison.thm}
%is an isomorphism. 

\smallskip

\begin{lem}[\protect{\cite[Lemma 4.14 and Corollary 4.15]{go:equiv}.}] \label{toroidal.lem}
Let $X=\P_\epsilon(M)$ be a group embedding. 
%Let $\mathcal{Y}$ be the associated torus embedding and 
%$\iota:\mathcal{Y}\to X$ the canonical inclusion. 
Then the following are equivalent: 
\begin{enumerate}[(a)]
% \item The induced map $\iota^*:H^*_{G\times G}(X)\to H^*_{T\times T}(\mathcal{Y})^W\simeq (H^*_T(\mathcal{Y})\otimes H^*_T)^W$
%is an isomorphism.
 \item $C_W(e)=\{1\}$ for every $e\in E_1(\overline{T})$.
 \item All closed $G\times G$-orbits in $X$ are isomorphic to $G/B\times G/{B^-}$. \hfill $\square$
\end{enumerate}
\end{lem}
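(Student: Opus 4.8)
The plan is to translate both conditions into statements about the parabolic subgroups attached to the rank-one idempotents of $\overline{T}$ and then observe that these statements coincide. Recall from the Putcha--Renner structure theory of reductive monoids (see \cite{re:lam}) that the closed $G\times G$-orbits of $X=\P_\epsilon(M)$ are indexed by $\Lambda_1$: for $e\in\Lambda_1$ one has $G[e]G\simeq G/P_e\times G/P_e^{-}$, where $P_e=\{g\in G\,|\,ge=ege\}\supseteq B$ and $P_e^{-}=\{g\in G\,|\,eg=ege\}\supseteq B^{-}$ are opposite parabolic subgroups. The structural input I will invoke is the standard dictionary between $e$ and its associated character $\lambda_e$ (the one appearing in Theorem \ref{gkm_data_emb.thm} via $T\to Tf\to Tf/k^*\simeq k^*$): the parabolic $P_e$ equals $P(\lambda_e)$, its Levi factor is the centralizer of the image of $\lambda_e$, and the Weyl group of that Levi is exactly $C_W(e)$. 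In particular $P_e$ is generated by $B$ together with the root subgroups $U_{-\alpha}$ for the reflections $s_\alpha\in C_W(e)$, so that $P_e=B$ (and dually $P_e^{-}=B^{-}$) if and only if $C_W(e)=\{1\}$.

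First I would reduce condition (a) from all of $E_1(\overline{T})$ to $\Lambda_1$. The group $W$ acts on $E(\overline{T})$ preserving the rank stratification, $\Lambda_1$ is a set of representatives for the $W$-orbits on $E_1(\overline{T})$, and conjugation by $w\in W$ carries $C_W(e)$ isomorphically onto $C_W(wew^{-1})$. Hence ``$C_W(e)=\{1\}$ for every $e\in E_1(\overline{T})$'' is equivalent to ``$C_W(e)=\{1\}$ for every $e\in\Lambda_1$''. The point that $G[e]G$ is really an orbit of $G/Z\times G/Z$ rather than of $G\times G$ is harmless here: $Z$ is central, hence contained in every parabolic, so passing to the quotient $G/Z$ changes neither $P_e$ nor $C_W(e)$.

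With these preliminaries the equivalence becomes formal. By the orbit description, (b) holds precisely when $G/P_e\times G/P_e^{-}\simeq G/B\times G/B^{-}$ for every $e\in\Lambda_1$; since a flag variety $G/P$ determines $P$ up to conjugacy and $B\subseteq P_e$, this forces $P_e=B$ and, dually, $P_e^{-}=B^{-}$ for all $e\in\Lambda_1$, which by the first paragraph is equivalent to $C_W(e)=\{1\}$ for all $e\in\Lambda_1$, hence to (a) by the second paragraph. The only genuine obstacle I foresee is the dictionary fact invoked in the first paragraph --- that the ``type'' of $P_e$ is governed by $C_W(e)$; this is classical in the structure theory of reductive monoids, but it has to be cited carefully (through the explicit description of $\lambda_e$ and $P(\lambda_e)$), after which everything else is bookkeeping.
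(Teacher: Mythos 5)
The paper does not actually prove this lemma --- it is quoted verbatim from \cite[Lemma 4.14 and Corollary 4.15]{go:equiv} with the proof deferred to that reference --- so there is no internal argument to compare against. Your reconstruction is correct and is the standard one: the closed orbits are $G[e]G\simeq G/P_e\times G/P_e^-$ for $e\in\Lambda_1$ (stated in the paper just before Theorem \ref{gkm.emb.thm}), the Levi $P_e\cap P_e^-=C_G(e)$ has Weyl group $C_W(e)$ (Putcha--Renner), so $P_e=B$ iff $C_W(e)=\{1\}$, and the passage from $\Lambda_1$ to all of $E_1(\overline{T})$ is exactly the $W$-equivariance $C_W(wew^{-1})=wC_W(e)w^{-1}$ together with $\Lambda_1$ being a transversal of the $W$-orbits. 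Two small imprecisions: the character you invoke from Theorem \ref{gkm_data_emb.thm} is attached to \emph{rank-two} idempotents $f\in E_2(\overline{T})$ (where $Tf/k^*$ is one-dimensional), not to $e\in E_1(\overline{T})$, so the cocharacter realizing $P_e=P(\lambda)$ should be cited directly from \cite{re:lam} rather than from that theorem; and ``a flag variety $G/P$ determines $P$ up to conjugacy'' is false in general (e.g.\ the two non-conjugate maximal parabolics of $SL_3$ both give $\P^2$) --- what you actually need, and what does hold, is the dimension count $\dim G/P_e\le\dim G/B$ with equality iff $P_e=B$, since $B\subseteq P_e$. With those repairs the argument is complete.
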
 

Group embeddings satisfying the equivalent conditions of 
Lemma \ref{toroidal.lem} are called {\bf toroidal} embeddings (see e.g. 
\cite[Chapter 5]{ti:sph}).  
%in view of result below. 
Furthermore, 
smooth toroidal embeddings are exactly the regular embeddings of reductive groups  
\cite[Theorem 29.2]{ti:sph}. 

\smallskip

Theorem \ref{comparison.emb.thm} gives an explicit relation 
between our results and those of \cite{ap:opk}. Indeed, 
if $X=\P_\epsilon(M)$ is a toroidal group embedding and $\pi_1(G)$ 
is torsion free, then 
${\rm op}K_{G\times G}(X)$ is isomorphic to the subring of $W$-invariants in  
$\K(\mathcal{Y})\otimes R(T)$, where 
$\K(\mathcal{Y})$ is the ring of 
integral piecewise exponential functions on the fan of 
$\mathcal{Y}$.

%
%of \cite{go:equiv} concerning 
%rationally smooth projective group embeddings can 
%be extended 

%%%%%%%%%%%%%%%%%%%%%

%Brion: %varietes spheriques et theorie de Mori

\section{Further remarks}

\noindent (1) {\em Extending the results to equivariant 
operational Chow groups. Poincar\'e duality for singular schemes.} 
%Inspired by the work of 
Kimura's %\cite{ki:op}, %and Gillet \cite{gi:desc},  
cohomological descent for  
envelopes \cite[Theorems 2.3 and 3.1]{ki:op} 
has also been established for equivariant operational 
Chow groups ${\rm op}A^*_T(-)$ \cite[Section 2.6]{eg:eqint},       
%and 
%equivariant operational 
%algebraic cobordism 
%${\rm op}\Omega^*_T(-)$ \cite[Section 5]{gk:opcob}. 
%These are 
%This is 
the operational cohomology groups  
%theory 
associated to Edidin and Graham's  
equivariant Chow groups $A^T_*(-)$.  
%and 
%equivariant algebraic cobordism $\Omega^T_*(-)$ 
%\cite{hm:cob}, \cite{kri:cob}.   
On the category of smooth schemes, the functors
 ${\rm op}A^*_T(-)$ 
and $A^T_*(-)$ 
%(resp. ${\rm op}\Omega^*_T(-)$ and $\Omega^T_*(-)$) 
are known to agree ({\em op. cit.}). 
Furthermore, on the subcategory of smooth projective $T$-schemes,  
%and %, 
%when restricting to smooth projective $T$-schemes,  
%the 
corresponding  versions of the localization theorem 
and CS property   
%hold 
%on 
%smooth projective schemes 
hold for $A^T_*(-)_\Q$  
%on smooth projective $T$-schemes 
\cite[Section 3]{bri:eqchow}.  %and 
%\cite[Section 7]{kri:cob}). 
Since these are the intersection theory 
analogues of our main tools,    
%employed 
%
%In light of \cite[Section 3]{bri:eqchow} and \cite[Section 2.6]
%{eg:eqint}, the proof-techniques of this paper 
our arguments are readily translated into 
the language of equivariant operational Chow groups 
with $\Q$-coefficients, %readily to 
yielding versions of 
Theorems \ref{localization.thm},  
\ref{cs.thm} and 
%for the (rational) equivariant operational 
%Chow groups of all 
%singular complete $T$-schemes. 
%There is also an analogue of 
%Theorem 
\ref{gkm.thm} applicable to 
all singular complete $T$-schemes. %$X$. 
%For equivariant operational Chow groups, 
%%In the case of $T$-linear varieties, 
%%this was done in \cite{go:tlinear}, 
%%using a slightly different approach. 
See \cite{go:tlinear} for a slightly different 
approach in the case of $T$-linear varieties, and 
%, better 
%suited for dealing with $T$-linear varieties.  
%for complete $T$-skeletal varieties. In the 
%particular case of complete $T$-linear varieties, these results were 
%obtained in \cite{go:tlinear}. 
%In the same vein, one can establish the counterparts of Theorems \ref
%{localization.thm}, {cs.thm} and {gkm.thm} 
%in the equivariant operational algebraic cobordism of \cite{gk:opcob}. 
%Nevertheless, in the case of equivariant algebraic cobordism, 
%
%On the other hand, one could also consider  
%the   
% operational equivariant algebraic 
%cobordism of Gonz\'alez and Karu 
%\cite{gk:opcob}. This is the  
%operational theory built from  
%equivariant algebraic cobordism 
%(\cite{hm:cob}, \cite{kri:cob}). 
%As it steams from \cite[Theorem 7.6]{kri:cob}, 
%our arguments extend to the setting of \cite{gk:opcob}  
%only in the case of  
%complete $T$-schemes 
%admitting equivariant resolutions  
%with finitely many $T$-fixed points 
%(e.g. 
%spherical varieties and Schubert varieties). 
%
%\medskip
%
%\noindent (2) {\em Poincar\'e duality for singular schemes.} 
\cite{go:rm} for some applications to 
characterizing 
Poincar\'e duality on the Chow groups of     
singular $T$-schemes.  
%In \cite{go:rm} we use 
%the localization theorem for ${\rm op}A^*_T(-)_\Q$ 
%(in the generality stated here), together with a 
%notion of rational smoothness for Chow groups,  
%to provide some characterizations of %equivariant  
%Poincar\'e duality  
%on singular schemes with torus actions.  
%This builds on work of Brion \cite{bri:pd},   
%Renner \cite{re:ratsm} and the author \cite{go:cells}.
%, and are of independent interest. 
%
%in the algebraic setting of intersection theory. 
Moreover, when $k=\C$ and $X$ is a
rationally smooth $T$-skeletal variety, 
there is a natural 
isomorphism between ${\rm op}A^*_T(X)_\Q$ and $H^*_T(X)_\Q$,  
as their images on $A^*_T(X^T)_\Q=H^*_T(X^T)_\Q$ 
are canonically isomorphic \cite{gkm:eqc}, \cite{go:cells}. 
In particular, the %we get that 
equivariant operational Chow groups of (complex) 
rationally 
smooth $T$-skeletal varieties 
are free modules over ${\rm Sym}[\Delta]_\Q$.  
From this point of view, 
equivariant operational Chow groups behave like 
equivariant intersection cohomology, 
though the former
are somewhat more combinatorial and 
easier to compute on $T$-skeletal varieties.  
%(with no assumptions on singular locus). 
Notably, for projective group embeddings, 
%the results of Section 6 can be strengthen 
the results of \cite{go:equiv} remain valid 
when translating them into 
the context of rational smoothness in Chow groups \cite{go:rm} 
and equivariant operational 
Chow rings. %, yielding slightly stronger results than those 
%stated in Section 6.  
The results will appear in \cite{go:opemb}.     

%: notice that for any subtorus $T'\subset T$ of codimension one, $X^{T'}$ is a disjoint union of finitely many (possibly singular) copies of $\P^1$. 
%Call them ${\P^1}_{\rm sing}$ 
%For these, we get a natural isomorphism $A^*_T({\P^1}_{\rm sing})\to 
%H^*_T({\P^1}_{\rm sing})$ since this is 
%an isomorphism induced by the corresponding identity on their normalization (i.e. on $\P^1$) due to Kimura's method (also these singular projective lines are linear and at worst have  
%quotient singularities). 

\medskip

\noindent (2) {\em Equivariant multiplicities in $K$-theory.} 
Let $X$ be a complete $T$-scheme with finitely 
many fixed points.  
%(e.g. $X$ is a spherical variety or a Schubert variety).  
In virtue of Thomason's localization theorem for $K^T(-)$ 
\cite[Theorem 2.1]{th:con}, the following 
identity holds in $\mathcal{Q}(\Delta)$, the quotient field of $\Z[\Delta]$: 
$$
[\O_X]=\sum_{x\in X^T}EK(x,X)[\O_x], 
$$
where the various $EK(x,X)$ are (possibly zero) 
rational functions on $(\Delta_\Q)^*$.
%$N_\Q={\rm Hom}(M,\Z)_\Q$. 
Following the nomenclature of 
\cite[Section 4.2]{bri:eqchow} we call 
$EK(x,X)$ the {\em $K$-theoretic 
equivariant multiplicity of $X$ at $x$}.
If all the $EK(x,X)$ are non-zero, 
then, by Theorem \ref{localization.thm}, 
the Poincar\'e duality map 
$$
\K(X)\to K^T(X), \;\;\; c\mapsto c_{id_X}(\O_X)
$$ 
is injective (cf. proof of \cite[Theorem 4.1]{bri:pd}).  
We anticipate  
that the $EK(x,X)$'s are non-zero whenever 
$x$ is an attractive fixed point of $X$, because, in that case,     
%the $K$-theory equivariant multiplicity at $x$ 
$EK(x,X)$ 
is related to the Hilbert series of ${\rm Proj}(k[X_x])$, 
where $X_x$ is the unique open affine $T$-stable
neighborhood of $x$ (cf. \cite{bri:eqchow}, \cite{re:hilb}, \cite{bv:rr}). 
The notion of $K$-theoretic equivariant 
multiplicity at attractive fixed points 
is already present in 
%the equivariant 
%$K$-theory 
the study 
of flag varieties 
%$X(w)\subset G/B$ 
(see e.g. \cite{bbm:nil}).    
For complete toric varieties and simple group embeddings, 
our claim would imply that the natural map 
$\K(X)\to K^T(X)$ is always injective (this deeply 
contrasts with the behaviour of the map 
$K_T(X)\to K^T(X)$, 
whose kernel could be rather large, cf. \cite{ap:opk}). 
%Furthermore, if $X$ is also 
%simplicial, then, 
%by \cite[Corollary 2.2]{bv:rr}, 
%we would have that  
%the maps $S^{-1}K_T(X)\to S^{-1}\K(X)\to S^{-1}K^T(X)$ 
%are all isomorphisms. 
%%%%% can think of $\K(X)$ as $K^T(X)/Tor$ at least as a $R(T)$-mod.
%Nevertheless, 
In contrast, %it is worth noting  
surjectivity of the Poincar\'e duality map 
on singular schemes is a more delicate property, 
and quite often it does not hold. 
For instance, 
%one could 
consider the 
$\G$-action on $\P^3$ given by 
$t\cdot [x,y,z,w]=[t^2x,t^4y,t^3z,w]$.
Now let $Y\subset \P^3$ be the projective surface 
$z^2=xy$. %, where $\G$ acts via 
Clearly $Y$ is $\G$-invariant,   
$\K(Y)$ is torsion free, but $K^T(Y)$ 
has $R(\G)$-torsion 
coming from the fact that 
$\mu_2\subset \G$ fixes two lines in $Y$. 
%
%\smallskip 
%
%In the case of singular Schubert varieties 
%$X(w)\subsetneq G/B$, it is known that 
%both $\K(X(w))$ and 
%$K^T(X(w))$ are free $R(T)$-modules,  
%dual to each other  
%(e.g. by \cite[Lemma 5.5.1]{cg:rep} and   
%Theorem \ref{TequivKro.cor}). 
%Nevertheless, %when $X(w)$ is singular, 
%$K^T(X(w))$ does not admit a 
%natural product. 
%In contrast, we anticipate 
%that 
%our definition of 
%equivariant multiplicites  
%and the subsequent  
%injectivity of the Poincar\'e 
%duality map, 
%could 
%%yields a description of  
%construct geometric 
%bases for 
%$\K(X(w))$ %, the dual of $K^T(X(w))$, 
%as a subring of $K^T(X(w))$ 
%with a geometric basis.  
%Such bases are related to 
%in terms of Lascoux's Grothendieck polynomials.  
%Using this is a motivation, 
We shall develop these ideas %together with  
%our ideas on 
and explore the behaviour of %study the concept of  
$K$-theoretic equivariant multiplicities  
%to possibly singular $T$-schemes 
in a subsequent paper.     

%%% equivariant multiplicities should provide a numerical test for failure 
%%% of Poincar\'e duality. 

%B. At the moment, we do not know whether the Poincar\'e 
%duality map is also surjective for interesting examples 
%of singular, projective, simplicial toric varieties. An attempt 
%to answer this question is related to how well we can control 
%the finite isotropy groups that could occur. Of course, in 
%general such map cannot be surjective. Look at the 
%projective version of $z^2=xy$. 

%{\bf Note:} If $X=X(w)$ is a Schubert variety, the $K^T(X)$ is a free
%$R(T)$-module (by the Cellular Fibration Theorem 
%\cite[Lemma 5.5.1]{cg:rep}. Nevertheless, the Poincare map 
%in our case is far from being an isomorphism, even though 
%it is an injective map of free modules of the same rank.
%
%
%\medskip

%
%
%
%
%F. In subsequent work we relate the $K$-theory equivariant 
%multiplicities to certain Hilbert series. It seems that for 
%varieties where all (the finitely many) fixed points are 
%attractive these multiplicities are non-zero and hence the 
%natural map $\K(X)\to K^T(X)$ is injective. So we can really 
%think of $\K(X)$ as the torsion-free part of $K^T(X)$ for 
%at least toric varieties, group embeddings and 
%Schubert varieties. In other words, it is an extension of 
%$K^T(X^T)\subset K^T(X)$ that retains more global information.   
%Poincare duality for $\K(X)\to K^T(X)$. Seems more reasonable
%to expect this, since $\K(X)$ is finitely generated in many 
%cases of interest (even without the $T$-action). 

\appendix
\section{
%\subsection{Comparison with $G$-equivariant operational $K$-theory} 
$G$-equivariant K\"{u}nneth formula for spherical varieties}\label{app:A}

Recall that a $G$-variety is 
called {\em spherical} if it 
contains a dense $B$-orbit. 
Examples include flag varieties, 
symmetric spaces, and $G\times G$-equivariant  
embeddings of $G$ (e.g. toric varieties are spherical). 
For an up-to-date discussion of 
spherical varieties, %as well as a comprehensive bibliography, 
see \cite{ti:sph} and the 
references therein.

\smallskip

The following is a result of Merkurjev \cite{mer:comp}. 

\begin{thm}\label{merkurjev.thm} 
Let $G$ be a connected reductive group. Suppose that 
$\pi_1(G)$ is torsion-free. Then the following hold: 
\begin{enumerate}[(i)]
 \item $R(T)$ is a free $R(G)$-module of rank $|W|$, and 
       $R(T)\simeq R(G)\otimes \Z[W]$. 
 \item If $X$ is a $G$-scheme, then 
$$R(B)\otimes_{R(G)}K^G(X)\simeq K^G(X\times G/B)\simeq K^B(X)\simeq K^T(X).$$ 
In particular, $K^G(X)\simeq K^T(X)^W$. \hfill $\square$
\end{enumerate}
\end{thm}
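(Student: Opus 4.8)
The plan is to prove (i) first, then assemble the chain of isomorphisms in (ii) from right to left, and finally extract the extension-of-scalars statement and the corollary $K^G(X)\simeq K^T(X)^W$. For (i), I would begin from the classical identification of $R(G)$ with the ring of invariants $R(T)^W$ under restriction to $T$, which I take as known. The freeness assertion is the Pittie--Steinberg theorem. The hypothesis that $\pi_1(G)$ is torsion-free is exactly what makes $G_{\mathrm{der}}$ simply connected, so that the fundamental weights of the root system lift to characters of $T$; one then forms Steinberg's explicit elements $\{x_w\}_{w\in W}$ (each a product of classes of fundamental representations built from a reduced expression of $w$) and checks, by descending induction on $\ell(w)$, that they constitute an $R(G)$-basis of $R(T)$ --- the induction resting on the fact that the ``leading exponential'' of $x_w$ is $e^{w\varpi}$ for a suitable regular weight $\varpi$, together with a Vandermonde-type non-vanishing that makes the change-of-basis matrix invertible over $R(G)$. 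Once freeness of rank $|W|$ is in hand, the isomorphism $R(T)\simeq R(G)\otimes_{\mathbb Z}\mathbb Z[W]$ of $R(G)$-modules is automatic, the right-hand side being the free $R(G)$-module on the set $W$.

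For (ii), the two rightmost isomorphisms are formal and hold for every $G$-scheme $X$ with no hypothesis on $\pi_1(G)$. First, the $G$-equivariant isomorphism $G\times^{B}X\xrightarrow{\sim}X\times G/B$, $[g,x]\mapsto(gx,gB)$, identifies the quotient stacks $[(X\times G/B)/G]\cong[X/B]$ and hence gives $K^G(X\times G/B)\simeq K^B(X)$; taking $X=\mathrm{pt}$ records $K^G(G/B)\simeq R(B)$. Second, the morphism of quotient stacks $[X/T]\to[X/B]$ induced by $T\hookrightarrow B$ is a torsor under the unipotent radical $U$ of $B$, hence an iterated affine-space bundle, so the restriction $K^B(X)\to K^T(X)$ is an isomorphism by homotopy invariance of equivariant $K$-theory (and likewise $R(B)\simeq R(T)$).

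The substantive step --- and the one I expect to be the main obstacle --- is the leftmost isomorphism $R(B)\otimes_{R(G)}K^G(X)\simeq K^G(X\times G/B)$. The external product $K^G(G/B)\otimes_{\mathbb Z}K^G(X)\to K^G(X\times G/B)$ is $R(G)$-balanced, so it descends to a natural map $\mu\colon R(B)\otimes_{R(G)}K^G(X)\to K^G(X\times G/B)$, and the task is to prove $\mu$ bijective. Via the two isomorphisms already established this is equivalent to proving that the natural map $R(T)\otimes_{R(G)}K^G(X)\to K^T(X)$, $e^\lambda\otimes\xi\mapsto e^{\lambda}\cdot\mathrm{res}^{G}_{T}(\xi)$, is an isomorphism, and here the hypothesis on $\pi_1(G)$ enters decisively through part (i), since $R(T)$ is then \emph{free} over $R(G)$. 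I would run a Leray--Hirsch argument: the Bruhat decomposition $G/B=\bigsqcup_{w}BwB/B$ has cells $\cong\mathbb A^{\ell(w)}$ and, ordered by length, produces a filtration of $X\times G/B$ by closed $T$-stable subschemes whose successive quotients are $X\times\mathbb A^{\ell(w)}$; the resulting localisation long exact sequences in $T$-equivariant $K$-theory must be shown to split, and this is exactly where one feeds in the Steinberg basis $\{x_w\}$, lifting it to classes in $K^T(X)$ (for instance images of structure sheaves of the Schubert subvarieties of $X\times G/B$) that restrict to a $K^G(X)$-module basis adapted to the filtration. When $G$ is a product of general linear groups the argument collapses to the iterated projective bundle formula applied to the full flag bundle of the standard representation --- a reassuring special case --- but in general the control of the differentials (equivalently, the vanishing of the relevant $\mathrm{Tor}^{R(G)}(R(T),K^G(X))$ terms) is the crux, and is precisely what torsion-freeness of $\pi_1(G)$ buys.

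Granting $R(T)\otimes_{R(G)}K^G(X)\simeq K^T(X)$, the corollary follows by taking $W$-invariants: the $W$-action on $K^T(X)$ coming from $N_G(T)/T$ matches the $W$-action on the $R(T)$-factor, the pullback $\pi^{*}\colon K^G(X)\to K^T(X)$ is split injective with image inside $K^T(X)^W$ (a retraction being the proper pushforward $\pi_*$ along $X\times G/B\to X$, since $\pi_*\pi^*=\mathrm{id}$ as $G/B$ has trivial higher cohomology), and applying $(-)^W$ to $K^T(X)\simeq R(T)\otimes_{R(G)}K^G(X)$ together with $R(T)^W=R(G)$ and the freeness of $R(T)$ over $R(G)$ identifies $K^T(X)^W$ with $R(G)\otimes_{R(G)}K^G(X)=K^G(X)$.
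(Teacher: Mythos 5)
The paper does not actually prove this statement: it is imported verbatim from Merkurjev \cite{mer:comp} (see also his Handbook survey \cite{mer:eqKth}) and closed with a $\square$, so there is no internal argument to measure yours against. On its own terms, your sketch assembles the right ingredients for the routine parts: Steinberg's basis for (i) (with the correct observation that torsion-freeness of $\pi_1(G)$ is equivalent to $G_{\mathrm{der}}$ being simply connected, modulo the small reduction from the semisimple simply connected case to the reductive case), and the identifications $K^G(X\times G/B)\simeq K^B(X)\simeq K^T(X)$ via the associated-bundle isomorphism and homotopy invariance along the $U$-torsor $[X/T]\to[X/B]$.

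The gap is exactly where you suspect it, and what you propose does not close it. Your Leray--Hirsch computes the wrong group: a $T$-stable (indeed $B$-stable) Bruhat filtration of $X\times G/B$ with open strata $X\times\mathbb{A}^{\ell(w)}$ feeds localization sequences for $K^T(X\times G/B)$, whose graded pieces are copies of $K^T(X)$. Even granting the splitting, you would conclude that $K^T(X\times G/B)$ is free of rank $|W|$ over $K^T(X)$ --- not that $K^T(X)$ is free of rank $|W|$ over $K^G(X)$, which is the assertion. These are different statements (granting the theorem, the former group has rank $|W|^2$ over $K^G(X)$, the latter rank $|W|$); to get graded pieces equal to $K^G(X)$ you would need the cells to be $G$-stable, which they are not. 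A route that does close it --- essentially Merkurjev's --- runs the d\'evissage through the minimal parabolics: $P_\alpha/B\simeq\mathbb{P}^1$ is the projectivization of a two-dimensional representation of $P_\alpha$ precisely because $\pi_1(G)$ is torsion-free (the fundamental weights lift to $T$), so the equivariant projective bundle formula exhibits $K^B(X)$ as free of rank two over each $K^{P_\alpha}(X)$, producing $K^G(X)$-linear Demazure-type operators with which one proves generation by the Steinberg basis, independence following from a localization or determinant argument. In particular the hypothesis on $\pi_1(G)$ enters at a geometric step, not only through freeness of $R(T)$ over $R(G)$; ``feeding in the Steinberg basis'' to split your sequences is an assertion, not an argument. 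Finally, your last step is too quick: to commute $(-)^W$ past $\otimes_{R(G)}K^G(X)$ you need $R(T)\simeq R(G)\otimes\mathbb{Z}[W]$ as a $W$-module (i.e.\ induced from the trivial subgroup), which is the real content of the second clause of (i); reading that clause as mere freeness over $R(G)$, as you do, does not yield $(R(T)\otimes_{R(G)}M)^W\simeq M$.
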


%In view of the previous theorem, we shall always assume in the sequel that 
%$\pi_1(G)$ is torsion-free (equivalently, the commutator subgroup $[G,G]$ 
%is simply connected). 
%%%This assumption is needed to use the following results of 

\begin{thm} \label{GequivKunneth.thm}
Let $G$ be a connected reductive group with $\pi_1(G)$ torsion free. 
Let $X$ be a $G$-spherical variety. Then for any $G$-variety $Y$  the 
exterior product map, or K\"{u}nneth map, 
$$Ku_G:K_G(X)\otimes_{R(G)}K_G(Y)\to K_G(X\times Y)$$ 
is an isomorphism.  
\end{thm}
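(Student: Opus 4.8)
The plan is to descend the statement from $G$ to its maximal torus $T$ by faithfully flat base change, and then to treat the torus case by exploiting that a spherical variety, regarded with the action of $T$ alone, is a $T$-linear variety. For the reduction to $T$: since $\pi_1(G)$ is torsion free, Merkurjev's theorem (Theorem \ref{merkurjev.thm}, \cite{mer:comp}) furnishes, naturally in a $G$-variety $Z$, a base-change isomorphism $K_G(Z)\otimes_{R(G)}R(T)\cong K_T(Z)$ (which factors through $K_B(Z)$, using $R(B)=R(T)$ and that $B/T$ is an affine space), and it exhibits $R(T)$ as a free — hence faithfully flat — $R(G)$-algebra of rank $|W|$. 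Applying $-\otimes_{R(G)}R(T)$ to the Künneth map $Ku_G$ and rearranging tensor products, one identifies $Ku_G\otimes_{R(G)}R(T)$ with the $T$-equivariant Künneth map
$$Ku_T\colon K_T(X)\otimes_{R(T)}K_T(Y)\longrightarrow K_T(X\times Y),$$
the base-change isomorphism being applied to each of the $G$-varieties $X$, $Y$, $X\times Y$. By faithfully flat descent, $Ku_G$ is an isomorphism once $Ku_T$ is, so it suffices to prove that $Ku_T$ is an isomorphism when $X$ is a spherical $G$-variety regarded merely as a $T$-variety and $Y$ is an arbitrary $T$-variety.

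For the torus case I would use that a $G$-spherical variety has finitely many $B$-orbits and is therefore, as a $T$-variety, a $T$-linear variety (see \cite[Theorem 2.5]{go:tlinear}): it admits a finite filtration by $T$-stable closed subvarieties whose successive complements are affine spaces carrying linear $T$-actions. One then argues by induction along such a filtration, crossed with $Y$. On each stratum $\A^m\times Y$, homotopy invariance of equivariant $K$-theory gives $K_T(\A^m\times Y)\cong K_T(Y)\cong K_T(\A^m)\otimes_{R(T)}K_T(Y)$, so $Ku_T$ is an isomorphism there; for the inductive step one feeds the Künneth isomorphisms already known for a closed $T$-linear subvariety $Z$ and for its cellular open complement into a comparison of long exact sequences, the relevant exactness being available because every stratum that enters the computation is a smooth affine space, on which vector-bundle $K$-theory agrees with the theory $K^T(-)$ of coherent sheaves. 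Equivalently, one may simply invoke the Künneth formula for $T$-linear schemes supplied by the machinery of \cite{go:tlinear}.

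The hard part is precisely this last dévissage: vector-bundle $K_0$ carries no localization sequences over singular schemes, so one cannot cut $X$ along arbitrary closed subvarieties, and tensoring an abstract localization sequence with $K_T(Y)$ need not preserve exactness; the point of routing the argument through the class of $T$-linear varieties is that there every stratum one actually meets is a smooth affine space, so the entire computation is forced, piece by piece, from the trivial Künneth isomorphism for affine space. The other place where the hypotheses are used is the reduction to $T$, which relies on torsion-freeness of $\pi_1(G)$ through Merkurjev's theorem — this is what makes $R(T)$ free, hence faithfully flat, over $R(G)$ and makes the base-change isomorphism available — after which the descent from $T$ back to $G$ is automatic.
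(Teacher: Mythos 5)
Your proposal is correct and follows essentially the same route as the paper: reduce to the maximal torus (via the Borel subgroup) using Merkurjev's base-change isomorphism and the fact that $R(T)$ is free, hence faithfully flat, over $R(G)$, then settle the torus/solvable case using that a spherical variety has finitely many $B$-orbits. The only difference is that the paper simply cites the K\"unneth formula of Anderson--Payne (\cite[Proposition 6.4]{ap:opk}) for $B$-schemes with finitely many orbits, whereas you sketch the underlying d\'evissage through the $T$-linear stratification; that is the content of the cited result, so the arguments coincide.
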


\begin{proof}
%By Theorem \ref{merkurjev.thm} the diagram below is 
Consider the commutative diagram  
$$
\xymatrix{
K_B(X)\otimes_{R_B}K_B(Y) \ar[r]^{{\rm Ku}_B}& K_B(X\times Y) \\
K_G(X\times G/B)\otimes_{R_B}K_G(Y\times G/B) \ar[u]^{p_1\otimes p_2}& K_G((X\times Y)\times G/B)\ar[u]^{p}\\
[R_B\otimes_{R_G}K_G(X)]\otimes_{R_B}[R_B\otimes_{R_G}K_G(Y)] \ar[u]^{res_X\otimes res_Y}& R_B\otimes_{R_G}K_G(X\times Y)\ar[u]^{res_{X\times Y}}\\
[R_B\otimes_{R_G}K_G(X)]\otimes_{R_G}K_G(Y)]\ar[u]^{natural} \ar[r]^{id_{R_B}\otimes {\rm Ku}_G}& 
R_B\otimes_{R_G}K_G(X\times Y)\ar[u]^{\rm id}.\\
}
$$ 
The vertical maps are isomorphisms due to 
Theorem A.1, and ${\rm Ku}_B$ is an isomorphism 
by \cite[Proposition 6.4]{ap:opk} and the fact that the functors 
${\rm op}K_B(-)$ and $\K(-)$ agree on $B$-schemes. 
Therefore, the bottom horizontal map %$id_{R_B}\otimes {\rm Ku}_G$ 
is also an isomorphism.   
%where $p_i$, $i=1,2$ (resp. $p$) are the isomorphisms 
%induced by the inclusions $X\times \{B\}\to X\times G/B$ (resp. $(X\times Y)\times \{B\}\to (X\times Y)\times G/B$), 
%and $res_X$, $res_Y$, $res_{X\times Y}$ are the maps induced 
%by the restrictions $K_G(X)$ 
But this morphism is a faithfully flat extension of ${\rm Ku}_G$, 
because $R(B)\simeq R(T)$ is a free $R(G)$-module. 
% 
%it is also faithfully flat, and hence 
%
We conclude that ${\rm Ku}_G$ is an isomorphism of $R_G$-modules. 
%(for so is its 
%faithfully flat extension $id_{R_B}\otimes {\rm Ku}_G$).   
\end{proof}

%\begin{rem} 
%The $G$-equivariant K\"{u}nneth formula 
%fails if $\pi_1(G)$ has $\Z$-torsion 
%(e.g. $G=PGL(n)$ or $G=SO(n)$). 
%For a concrete example, consider the case 
%$X=G/U$, 
%where $U$ is the unipotent radical of $B$,  
%i.e.
% $U$ is a maximal unipotent subgroup of $G$), 
%and $Y=G$. 
%%%we get torsion that cannot be seen 
%%%by the target of the K\"{u}nneth map, that is, 
%Then 
%$$K_G(X\times Y)=% K((G/U\times G)/G)=
%K(G/U)=K(G)\simeq A^*(G)=A^*(G/B)/c(\Xi(T)).$$ 
%Notice that $U\hookrightarrow G\to G/U$ is a vector bundle. And the image of the 
%characteristic homomorphism $c:\Xi(T)\to A^1(G/B)$ is not surjective over $\Z$ in general 
%(it is surjective over $\Q$ always). So the rightmost term above is of the form 
%$\Z\oplus {\rm Torsion}$ (for this see Brion). Also $c$ is surjective if and only if 
%$G$ is special. 
%
%This same example holds true when $\pi_1(G)$ is torsion-free, as the left 
%hand side is $R_U\otimes_{R_G}\Z$ and the identification 
%of this with the expresion on the right hand side is Merkurjev's identity 
%Corollary 4.7 (Comparison paper). More generally, one can take 
%$X=G/H$ and $Y=G$, with $H$ a (closed) spherical subgroup, and check that, under 
%the conditions on $\pi_1(G)$, the equivariant Kunneth formula reduces to 
%Merkurjev's identity. 
%\end{rem}

\medskip

From Theorem \ref{GequivKunneth.thm} one formally deduces, 
as in the $T$-equivariant case 
(cf. \cite[Proposition 6.3]{ap:opk}), 
the following. 

\begin{cor}\label{GequivKro.cor} 
Let $G$ be a connected reductive group with $\pi_1(G)$ torsion free.
If $X$ be a complete $G$-spherical variety,  
then the $G$-equivariant Kronecker duality map 
$$\KG(X)\longrightarrow{\rm Hom}_{R(G)}(K^G(X),R(G)).$$ 
is an isomorphism. \hfill $\square$
\end{cor}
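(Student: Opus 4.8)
The statement to prove is Corollary~\ref{GequivKro.cor}: for $G$ connected reductive with $\pi_1(G)$ torsion free and $X$ a complete $G$-spherical variety, the $G$-equivariant Kronecker duality map $\KG(X)\to \Hom_{R(G)}(K^G(X),R(G))$ is an isomorphism.

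The plan is to deduce this formally from the $G$-equivariant K\"unneth isomorphism (Theorem~\ref{GequivKunneth.thm}), mimicking the derivation of the $T$-equivariant statement in \cite[Proposition~6.3]{ap:opk}. First I would recall the general shape of the argument: Kronecker duality follows from K\"unneth duality together with the existence of a pushforward to a point. Concretely, for a complete $G$-variety $X$ the structure map $a_X\colon X\to \mathrm{pt}$ is proper, so there is an equivariant Euler characteristic $\chi=(a_X)_*\colon K^G(X)\to K^G(\mathrm{pt})\simeq R(G)$, and the Kronecker pairing sends $c\in\KG(X)$ to the functional $\xi\mapsto \chi(X,c_{\mathrm{id}_X}(\xi))$. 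The claim is that this is a perfect pairing, i.e.\ induces an isomorphism onto $\Hom_{R(G)}(K^G(X),R(G))$.

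The key steps, in order, are: (1) Observe that $\KG(X)$ is, by definition of operational $K$-theory, a module of compatible families of operators, and by the K\"unneth isomorphism $Ku_G\colon K_G(X)\otimes_{R(G)}K_G(X)\xrightarrow{\sim}K_G(X\times X)$ every such operator is ``represented'' by a class on $X\times X$ via the two projections; more precisely, one identifies $\KG(X)$ with a suitable subgroup of $K_G(X\times X)$ and then with $\Hom$ of the relevant $K^G$-groups. (2) Invoke completeness of $X$ so that the diagonal $\Delta\colon X\to X\times X$ is proper and the projections $X\times X\to X$ admit proper pushforwards; this is what lets one convert a class in $K_G(X\times X)$ into an operator on $K^G(X)$ and, conversely, realize an element of $\Hom_{R(G)}(K^G(X),R(G))$ as such a class. (3) Run the standard bookkeeping: the composite
$$
\KG(X)\longrightarrow K_G(X\times X)\longrightarrow \Hom_{R(G)}(K^G(X),K^G(X))\longrightarrow \Hom_{R(G)}(K^G(X),R(G))
$$
is the Kronecker map, and each arrow is an isomorphism — the first by K\"unneth and the operational description, the last by postcomposition with the (perfect, for spherical $X$) Euler pairing $\chi$. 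One may need the intermediate fact that $K^G(X)$ is a finitely generated projective (indeed free) $R(G)$-module for $X$ spherical; this follows from $K^G(X)\simeq K^T(X)^W$ (Theorem~\ref{merkurjev.thm}) together with the known structure of $K^T(X)$ for spherical $X$, so that $\Hom_{R(G)}(K^G(X),R(G))$ is well-behaved and the double-dual map is an isomorphism.

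The main obstacle I anticipate is step~(1): making precise the identification of the operational ring $\KG(X)$ with a subobject of $K_G(X\times X)$ and checking that under this identification the abstract operational compatibilities (proper pushforward, flat pullback, Gysin) correspond exactly to the conditions that make the K\"unneth-represented class give a well-defined $R(G)$-linear functional after pairing with $\chi$. In the $T$-equivariant case this is \cite[Proposition~6.3]{ap:opk}, and the whole point of Theorem~\ref{GequivKunneth.thm} is that the only ingredient of that proof which is not purely formal — the K\"unneth isomorphism — has now been established $G$-equivariantly for spherical $X$. So once $Ku_G$ is in hand, the corollary is ``formal,'' and the remaining work is simply transcribing the $T$-argument with $R(T)$ replaced by $R(G)$ and $\K(-)$ by $\KG(-)$, using that these functors agree with $K_G(-)$ resp.\ $K^G(-)$ in the relevant smooth/affine situations and that $R(T)$ is free over $R(G)$ so that all tensor and Hom operations are exact.
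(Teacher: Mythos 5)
Your proposal matches the paper's argument: the corollary is deduced formally from the $G$-equivariant K\"unneth isomorphism (Theorem~\ref{GequivKunneth.thm}) by transcribing the proof of \cite[Proposition~6.3]{ap:opk} with $R(T)$ replaced by $R(G)$, which is exactly what the paper does. Your additional remarks on properness of $X\to\mathrm{pt}$ and freeness of $K^G(X)$ over $R(G)$ are consistent with that route and do not constitute a different approach.
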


As a byproduct, our main theorems on torus actions 
can be used to calculate the $G$-equivariant operational 
$K$-theory of spherical varieties. 

\begin{cor}\label{ring.W.inv.cor}
Let $G$ be a connected reductive group with $\pi_1(G)$ torsion free.
If %$\pi_1(G)$ is torsion free and 
$X$ is complete $G$-spherical variety, then 
$$\K(X)\simeq \KG(X)\otimes_{R(G)}R(T).$$  
Consequently, $(\K(X))^W\simeq \KG(X)$.  
\end{cor}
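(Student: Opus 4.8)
The plan is to deduce the statement from $G$- and $T$-equivariant Kronecker duality together with Merkurjev's comparison results, using that a complete spherical variety is a $B$-scheme with finitely many orbits.

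First I would record the natural ring homomorphism that we must show is an isomorphism. Restriction of the structure group from $G$ to $T$ gives an $R(G)$-linear ring map $\KG(X)\to \K(X)$, and the $R(T)$-algebra structure on $\K(X)$ then produces, by base change, a ring map $\KG(X)\otimes_{R(G)}R(T)\to \K(X)$, $c\otimes r\mapsto r\cdot c$. Since $X$ is complete and $G$-spherical, $B$ acts on $X$ with finitely many orbits (indeed $X$ is $T$-linear, cf. \cite{go:tlinear}), so Property (f) of the Introduction (\cite[Theorem 6.1]{ap:opk}), applied to the Borel $B$ with maximal torus $T$, yields an isomorphism $\K(X)\xrightarrow{\sim}\Hom_{R(T)}(K^T(X),R(T))$. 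On the other hand Corollary \ref{GequivKro.cor} gives $\KG(X)\cong\Hom_{R(G)}(K^G(X),R(G))$.

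Next I would tensor the last identification with $R(T)$ over $R(G)$. By Theorem \ref{merkurjev.thm}(i), $R(T)$ is a free $R(G)$-module of finite rank, hence flat and finitely presented; moreover $R(G)$ is Noetherian and $R(T)$ is a finite $R(G)$-module, so $K^T(X)$ (finitely generated over $R(T)$) is finitely generated over $R(G)$, and therefore so is its submodule $K^G(X)=K^T(X)^W$. Consequently the canonical map
$$\Hom_{R(G)}(K^G(X),R(G))\otimes_{R(G)}R(T)\;\xrightarrow{\ \sim\ }\;\Hom_{R(T)}\!\bigl(K^G(X)\otimes_{R(G)}R(T),\,R(T)\bigr)$$
is an isomorphism. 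Applying Theorem \ref{merkurjev.thm}(ii), which identifies $K^G(X)\otimes_{R(G)}R(T)\cong K^T(X)$, the right-hand side becomes $\Hom_{R(T)}(K^T(X),R(T))\cong\K(X)$. Composing all of these isomorphisms gives $\KG(X)\otimes_{R(G)}R(T)\cong\K(X)$, and a diagram chase shows that this composite coincides with the natural map above: the Kronecker maps are induced by pushforward to a point, hence compatible with restriction of the structure group and with the base change $R(G)\to R(T)$. For the final assertion, I would take $W$-invariants: by Theorem \ref{merkurjev.thm}(i) we may regard $R(T)\cong R(G)\otimes_{\Z}\Z[W]$ with $W$ acting through the regular representation, so $\K(X)\cong\KG(X)\otimes_{R(G)}R(T)\cong\KG(X)\otimes_{\Z}\Z[W]$ as $W$-modules with $W$ acting only on the second factor; since $\bigl(M\otimes_{\Z}\Z[W]\bigr)^{W}\cong M$ for any abelian group $M$ (the projection formula plus Frobenius reciprocity for $\mathrm{Ind}_{1}^{W}$), we get $(\K(X))^{W}\cong\KG(X)$.

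The hard part will be the compatibility verification in the third paragraph: showing that the chain of three essentially formal isomorphisms equals the geometrically natural map $\KG(X)\otimes_{R(G)}R(T)\to\K(X)$, i.e. that $G$- and $T$-equivariant Kronecker duality fit into a commutative square under change of groups. This rests on the functoriality of the equivariant Euler characteristic and of operational classes with respect to restricting $G$-actions to $T$-actions. The remaining points — Noetherianity of $R(G)$, finiteness of $K^T(X)$, and moving $\Hom$ past $\otimes_{R(G)}R(T)$ — are routine once these structural facts are in place.
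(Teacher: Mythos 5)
Your proposal is correct and follows essentially the same route as the paper: both arguments combine $G$-equivariant Kronecker duality (Corollary \ref{GequivKro.cor}) with $T$-equivariant Kronecker duality (Property (f)), commute $\Hom$ past $-\otimes_{R(G)}R(T)$ using that $R(T)$ is free over $R(G)$ and $K^G(X)$ is finitely generated, invoke Merkurjev's identification $K^G(X)\otimes_{R(G)}R(T)\simeq K^T(X)$, and deduce the $W$-invariants statement from $R(T)\simeq R(G)\otimes\Z[W]$. Your extra care about the composite agreeing with the geometrically natural map is a reasonable refinement the paper leaves implicit.
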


\begin{proof}
In light of Corollary \ref{GequivKro.cor} we have the isomorphism 
$$
\KG(X)\simeq {\rm Hom}_{R(G)}(K^G(X),R(G)).
$$
Since the $R(G)$-modules 
$R(T)$ and $K^G(X)$ are, respectively, 
free and finitely generated, tensoring with $R(T)$ 
both sides of the identity above yields 
$${\rm Hom}_{R(T)}(K^G(X)\otimes_{R(G)}R(T),
R(G)\otimes_{R(G)}R(T)).$$
The latter expression identifies, in turn, to 
$\K(X)$, due to 
$T$-equivariant Kronecker duality (Property (f), Introduction) 
%Theorem \ref{TequivKro.cor} 
and the fact that $K^G(X)\otimes_{R(G)}R(T)\simeq K^T(X)$. 
The second assertion follows from 
our previous argument once we 
recall that $R(T)\simeq R(G)\otimes \Z[W]$. 
\end{proof}

%We conclude this section by
Next we show that 
the functors $\KG(-)$ and $K_G(X)$ agree 
on smooth projective $G$-spherical varieties. 
For this, a few extra facts need to be brought to our  
attention.  
The natural map 
$\KG(X\to pt)\to K^G(X)$ is always 
surjective. This stems from the proof of 
\cite[Proposition 4.4]{ap:opk}. 
%
%Though they prove injectivity for $T$-actions, 
%that proof relies on the fact that $K^T(X)$ 
%is generated by the structure sheaves of $T$-invariant 
%subvarieties, which does not 
%necessarily hold for more general reductive groups $G$. 
%
In fact, this map is an isomorphism when $G$ is a torus.  
%For more general reductive groups $G$, the formal arguments of 
%\cite[Proposition 4.4]{ap:opk} adapt to yield 
%surjectivity of the corresponding map. 
%Injectivity, 
Nonetheless, for more general reductive groups $G$, 
injectivity of such map 
is a more delicate issue, and it does not 
follow from the arguments given in \cite[Proposition 4.4]{ap:opk}.  
The main problem is that, unlike to torus case,    
$K^G(X)$ might not be generated by the classes of the 
structure sheaves of $G$-invariant subvarieties. 
%then the arguments of Anderson-Payne 
%also extend to yield surjectivity of the aformentioned morphism. 
%As this is not the generic situation,    
%   
%In the case of $T$-actions 
%it is an isomorphism as proven. 
%For general $G$,
We can overcome this issue in the case 
of smooth projective $G$-spherical 
varieties by appealing to 
$G$-equivariant Kronecker duality. %, as shown next.  
%
%we provide here a proof 
%in the case of  
%{\em smooth projective} $G$-spherical varieties.  

\begin{cor}
Let $G$ be a connected reductive group with 
$\pi_1(G)$ torsion free.
If $X$ is a smooth projective $G$-spherical 
variety, then the natural maps 
$$K_G(X)\to \KG(X)\to \KG(X\to pt)\to K^G(X)$$ 
are isomorphisms.   
\end{cor}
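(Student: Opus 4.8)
The statement asserts that for a smooth projective $G$-spherical variety $X$, the composite
$$K_G(X)\to \KG(X)\to \KG(X\to pt)\to K^G(X)$$
is a chain of isomorphisms. My plan is to exhibit each arrow as an isomorphism separately, using $G$-equivariant Kronecker duality (Corollary \ref{GequivKro.cor}) as the main lever, exactly mirroring the torus case in \cite{ap:opk}.

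\emph{Step 1: the last map.} First I would recall, as noted just before the statement, that the natural map $\KG(X\to pt)\to K^G(X)$, $c\mapsto c_{{\rm id}_X}(\mathcal{O}_X)$, is always surjective (from the proof of \cite[Proposition 4.4]{ap:opk}). For injectivity I would invoke Corollary \ref{GequivKro.cor}: since $X$ is complete $G$-spherical, the $G$-equivariant Kronecker map $\mathcal{K}_G:\KG(X)\to {\rm Hom}_{R(G)}(K^G(X),R(G))$ is an isomorphism. Now if $X$ is \emph{smooth}, $K^G(X)\simeq K_G(X)$ is a self-dual object in a suitable sense: more precisely, the pairing $K_G(X)\times K^G(X)\to R(G)$ given by $(\gamma,\xi)\mapsto \chi(X,\gamma\cdot\xi)$ is perfect (this is equivariant Poincaré duality on a smooth complete variety, valid here because $\pi_1(G)$ is torsion free so $K^G(X)\simeq K^T(X)^W$ is the $W$-invariants of a free $R(T)$-module and one reduces to the torus statement, which is classical). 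Feeding $K^G(X)\simeq {\rm Hom}_{R(G)}(K^G(X),R(G))$ back through $\mathcal{K}_G$ forces $\KG(X)\to K^G(X)$ to be injective, hence an isomorphism, and simultaneously shows $\KG(X\to pt)=\KG(X)$ (the restriction-to-$X$ map is the identity here since $X$ is the terminal object in the category of $T$-maps to $X$).

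\emph{Step 2: the first map.} For $K_G(X)\to \KG(X)$, again because $X$ is smooth, I would argue that this is an isomorphism by the same reduction to the torus: restricting to a maximal torus, Property (b) of the Introduction gives $K_T(X)\xrightarrow{\sim}\K(X)\xrightarrow{\sim}K^T(X)$ for smooth $X$, and both $K_G(-)$ and $\KG(-)$ are recovered from their $T$-counterparts by taking $W$-invariants --- for $K_G$ this is Merkurjev's theorem (Theorem \ref{merkurjev.thm}(ii)), and for $\KG$ this is Corollary \ref{ring.W.inv.cor} ($X$ being complete $G$-spherical). Since taking $W$-invariants is exact and compatible with the natural maps, the isomorphism $K_T(X)\simeq\K(X)$ descends to $K_G(X)\simeq\KG(X)$.

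\emph{Main obstacle.} The delicate point, flagged in the paragraph preceding the statement, is precisely the injectivity in Step 1: for general reductive $G$ one cannot run the torus argument of \cite[Proposition 4.4]{ap:opk} because $K^G(X)$ need not be generated by structure sheaves of $G$-invariant subvarieties. The resolution is to bypass generation entirely and use $G$-equivariant Kronecker duality together with smoothness (which supplies the perfect Poincaré pairing on $X$); this is where the hypothesis that $X$ be smooth \emph{and} projective, and that $\pi_1(G)$ be torsion free, all get used. A careful check that the Kronecker map, the Poincaré pairing, and the natural map $\KG(X)\to K^G(X)$ fit into a single commuting square is what makes the injectivity fall out, and that compatibility is the one thing I would verify in detail rather than cite.
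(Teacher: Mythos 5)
Your overall strategy --- leveraging $G$-equivariant Kronecker duality to sidestep the failure of the ``generated by invariant structure sheaves'' argument --- is exactly the lever the paper pulls, but your execution contains one outright error and one deferred verification that is really the crux. The error is the parenthetical claim that $\KG(X\to pt)=\KG(X)$ because ``the restriction-to-$X$ map is the identity since $X$ is the terminal object.'' These are different bivariant groups: an element of $\KG(X\to pt)$ is a family of operations $K^G(Y)\to K^G(Y\times X)$ indexed by $G$-schemes $Y$, whereas an element of $\KG(X)=\KG(X\xrightarrow{\rm id}X)$ is a family of endomorphisms of $K^G(Y)$ indexed by maps $Y\to X$; the comparison map is nontrivial, and the paper must cite \cite[Proposition 4.3]{ap:opk} (valid for smooth complete varieties, and independent of their Lemma 2.3) to identify them. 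The deferred verification --- that the Kronecker map factors as the natural map $\KG(X)\to K^G(X)$ followed by the perfect Poincar\'e pairing --- amounts to showing $c_{{\rm id}_X}(\xi)=c_{{\rm id}_X}(\mathcal{O}_X)\cdot\xi$ for all $\xi$. This does hold (write $\xi=\gamma\cdot[\mathcal{O}_X]$ via Poincar\'e duality and use commutativity of the operational ring, $c\circ[\gamma]=[\gamma]\circ c$), but it is essentially the content of the statement being proved, so it cannot be left as an afterthought ``to verify in detail'' without making that argument explicit.

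The paper's proof is shorter and avoids both issues: the displayed composite $K_G(X)\to K^G(X)$ is the Poincar\'e duality map, already known to be an isomorphism for smooth $X$, so it suffices to prove that the last two arrows are isomorphisms; the middle one is \cite[Proposition 4.3]{ap:opk}; and the last one is a surjective $R(G)$-module map (surjectivity coming from the proof of \cite[Proposition 4.4]{ap:opk}) between free modules of the same finite rank $|X^T|$ --- freeness of $\KG(X)$ being read off from Kronecker duality applied to the free module $K^G(X)$ --- hence an isomorphism. Your Step 2 (reduction to the torus by taking $W$-invariants via Merkurjev's theorem and Corollary \ref{ring.W.inv.cor}) is a workable alternative for the first arrow, but it becomes unnecessary once the composite and the last two arrows are known to be isomorphisms.
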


\begin{proof}
Note that the displayed diagram  
is a factorization of the Poincar\'e duality 
map $K_G(X)\to K^G(X)$, which is known to be 
an isomorphism. Moreover, %both source and target 
$K_G(X)$ is a free $R(G)$-module of 
rank $|X^T|$ \cite[Lemma 1.6]{uma:kth}.  
Thus, %to prove the result, 
it suffices 
to show that the last two maps in the array are 
isomorphisms. Bearing this in mind, observe that 
the map $\KG(X)\to \KG(X\to pt)$ 
is an isomorphism by \cite[Proposition 4.3]{ap:opk}, 
(which is independent of their Lemma 2.3, the main technical 
issue in our setting). 
%Consequently, it suffices to show that 
%$\KG(X)\to K_G(X)$ is injective. 
On the other hand, by Corollary \ref{GequivKro.cor},  
$\KG(X)$ is a  
free $R(G)$-module 
of rank $|X^T|$ (hence so is $\KG(X\to pt)$).  
It follows that $\KG(X\to pt)\to K^G(X)$,  
being a surjective map of free modules of the same rank, 
is an isomorphism.   
\end{proof}

%\begin{rem}
%Showing that the operational $K$-theory class are compatible with exterior 
%products should imply that the last map above is always an isomorphism. In case 
%$K^G(X)$ is generated by the projective pushforward images of the classes 
%$1_Y$ for all smooth varieties $Y$ with a map to $X$, then such compatibility holds 
%(see \cite[Remark 3.4]{gk:opcob}).  
%\end{rem}

%The action of $W$ on $\K(X)$ is natural as 
%$W$-acts on all $R(T)$-modules involved.
%$(wf)(x):=wf(w^{-1}x)$. 

%%%%%%%%%%%%%%%%%%%%%%%%%%%%%%%% G-Kunneth

\end{document}